\documentclass{amsart}
\usepackage{amsfonts}
\usepackage{bbm}
\usepackage{mathrsfs}
\usepackage{amsmath}
\usepackage{bigdelim}
\usepackage{multirow}
\usepackage{amssymb}
\usepackage{indentfirst}
\usepackage{CJK}
\usepackage{fancyhdr}
\usepackage{amscd,amssymb,amsmath,graphicx,verbatim,xy}
\usepackage[TS1,OT1,T1]{fontenc}

\newtheorem{theorem}{Theorem}[section]
\newtheorem{lemma}[theorem]{Lemma}
\newtheorem{corollary}[theorem]{Corollary}
\newtheorem{proposition}[theorem]{Proposition}

\theoremstyle{definition}
\newtheorem{definition}[theorem]{Definition}
\newtheorem{example}[theorem]{Example}

\newtheorem{question}[theorem]{Question}
\theoremstyle{remark}
\newtheorem{remark}[theorem]{Remark}
\numberwithin{equation}{section}

\begin{document}

\title[The Jordan decomposition and Kaplansky's second test problem]{The Jordan decomposition and Kaplansky's second test problem for Hermitian holomorphic vector bundles}
	
\author{Bingzhe Hou}
\address{Bingzhe Hou, School of Mathematics, Jilin University, 130012, Changchun, P. R. China}
\email{houbz@jlu.edu.cn}
	
\author{Chunlan Jiang}
\address{Chunlan Jiang, School of Mathematics, Hebei Normal University, 050016, Shijiazhuang, P. R. China}
\email{cljiang@hebtu.edu.cn}

\date{}
\subjclass[2010]{51M15, 47B91, 46H20.}
\keywords{Similarity deformations, Jordan decomposition, Kaplansky's second test problem, Hermitian holomorphic vector bundles, bases.}
\thanks{}
\begin{abstract}
In 1954, I. Kaplansky proposed three test problems for deciding the strength of structural understanding of a class of mathematical objects in his treatise "Infinite abelian groups", which can be formulated for very general mathematical systems. In this paper, we focus on Kaplansky's second test problem in a context of complex geometry. Let $H^2_{\beta}$ be a weighted Hardy space and let $\textrm{Hol}(\overline{\mathbb{D}})$ be the set of all analytic functions on the unit closed disk $\overline{\mathbb{D}}$. The Cowen-Douglas operator theory tells us that each $h\in\textrm{Hol}(\overline{\mathbb{D}})$ induces a Hermitian holomorphic vector bundle on $H^2_{\beta}$, denoted by $E_{h(S_\beta)}(\Omega)$ ($E_{h(S_\beta)}$ in brief), where $\Omega$ is some certain connected open subset in $\mathbb{C}$. We show that the vector bundle $E_{h(S_\beta)}$ is a push-forwards Hermitian holomorphic vector bundle and study the similarity deformation problems of those bundles. Our main theorem is that if $H^2_{\beta}$ is a weighted Hardy space of polynomial growth, then for any $f\in \textrm{Hol}(\overline{\mathbb{D}})$, there exist a unique positive integer $m$ and an analytic function $h\in\textrm{Hol}(\overline{\mathbb{D}})$ inducing an indecomposable Hermitian holomorphic vector bundle $E_{h(S_{\beta})}$, such that $E_{f(S_\beta)}$ is similar to $\bigoplus_1^m E_{h(S_\beta)}$, where $h$ is unique in the sense of analytic automorphism group action. That could be seemed as a Jordan decomposition theorem for the push-forwards Hermitian holomorphic vector bundles. Furthermore, we give the similarity classification of those push-forwards Hermitian holomorphic vector bundles induced by analytic functions, and give an affirmative answer to Kaplansky's second test problem for those objects. As applications, we also give an affirmative answer to the geometric version and generalized version of a problem proposed by R. Douglas in 2007, and obtain the $K_0$-group of the commutant algebra of a multiplication operator on a weighted Hardy space of polynomial growth. In addition, we give an example to show the setting of polynomial growth condition is necessary, i.e., the related conclusions are possible to be false for the weighted Hardy spaces of intermediate growth.
\end{abstract}
\maketitle
\tableofcontents

\section{Introduction}

An important and standard problem in mathematics is to classify objects in a category up to some certain equivalence. In 1954, I. Kaplansky proposed three test problems for deciding the strength of structural understanding of a class of mathematical objects in his treatise "Infinite abelian groups" \cite{Kap54}. In particular, the second test problem is that if $G$ and $H$ are abelian groups and $G\oplus G$ is isomorphic to $H\oplus H$, is
$G$ isomorphic to $H$? In fact, the questions are very natural. If a direct sum operation is workable, as noted by Kaplansky himself, analogues of these problems "can be formulated for very general mathematical systems".  In the context of operator theory, Kaplansky's second test problem is that if $A$ and $B$ are bounded linear operators acting on an (infinite-dimensional) separable Hilbert space $\mathcal{H}$, and $A\oplus A$ is equivalent to $B\oplus B$ in a precise sense, is $A$ equivalent to $B$? In 1957,  R. Kadison and I. Singer \cite{Kad57} first considered the three test problems for single operators with respect to unitary equivalence. Jordan standard form plays an important role in the study of matrices and operators. In 1990, K. Davidson and D. Herrero \cite{Dav90} studied the Jordan form of a bitriangular operator and then answered the Kaplansky's test problems for the bitriangular operators with respect to quasi-similarity.
In 1997, E. Azoff \cite{Az95} also studied the three test problems in operator algebraic context including unitary equivalence of von Neumann algebras and similarity equivalence of representations of (non self-adjoint) matrix algebras. Although classifications for some specific classes have been obtained, the general Kaplansky's test problems remain open so far. In the present paper, we are interested in the Kaplansky's second test problem in the context of geometry. Hermitian holomorphic vector bundles over an open set $\Omega$ in the complex plane $\mathbb{C}$ naturally admit a direct sum operation. Then, Kaplansky's second test problem could be expressed as follows. If $E_1$ and $E_2$ are Hermitian holomorphic vector bundles over a domain $\Omega$ (a connected open set in $\mathbb{C}$) and $E_1\oplus E_1$ is equivalent to $E_2\oplus E_2$ in the sense of similarity, is $E_1$ equivalent to $E_2$ in the sense of similarity? In this paper, we focus on a class of Hermitian holomorphic vector bundles induced by analytic functions on the unit closed disk. We aim to provide the "Jordan form" of these Hermitian holomorphic vector bundles, and then answer the Kaplansky's second test problem in this geometric context. Moreover, we could give a similarity classification for these Hermitian holomorphic vector bundles induced by analytic functions.

Following from the isometric embedding theorem of Calabi \cite{Cal53}, every Hermitian holomorphic vector bundles over a domain $\Omega$ could be seemed as a sub-bundles in $\Omega\times \mathcal{H}$, where $\mathcal{H}$ is a complex separable Hilbert space. Then, we call two Hermitian holomorphic vector bundles similarity equivalent, if there exists a bounded invertible operator on $\mathcal{H}$ which transforms one bundle to the other one. In particular, M. Cowen and R. Douglas \cite{CD} introduced the eigenvector bundles from a class of geometric operators. Then, the similarity classification of Hermitian holomorphic vector bundles is closely related to the similarity classification of Cowen-Douglas operators. First of all, let us introduce and review some basic concepts related to holomorphic vector bundles (we refer to \cite{H05}, \cite{JJ07} and \cite{CD} to see more details).

Let $\mathcal{H}$ be a complex separable Hilbert space and $\mathcal{L}(\mathcal{H})$ be the set of all bounded linear operators from $\mathcal{H}$ to itself. Let $\mathcal{G}r(n,\mathcal{H})$ be the $n$-dimensional Grassmann manifold which is the set of all $n$-dimensional subspaces of $\mathcal{H}$. For a domain $\Omega$ in $\mathbb{C}$, we say that a map $\mathfrak{g}: \Omega\rightarrow\mathcal{G}r(n,\mathcal{H})$ is a holomorphic curve on $\Omega$, if there exist $n$ holomorphic $\mathcal{H}$-valued functions $\gamma_1, \gamma_2, \ldots, \gamma_n$ on $\Omega$ such that $\mathfrak{g}(\lambda)=\textrm{span}\{\gamma_1(\lambda), \gamma_2(\lambda), \ldots, \gamma_n(\lambda)\}$ is an $n$-dimensional subspace for any $\lambda\in\Omega$. Furthermore, a holomorphic curve $\mathfrak{g}$ naturally induces a Hermitian holomorphic vector bundle $E(\Omega)$ over $\Omega$, defined by
\[
E(\Omega)=\{(\lambda,\mathbf{x}); \ \lambda\in \Omega \ \text{and} \ \mathbf{x}\in \mathfrak{g}(\lambda)\}.
\]
We say that $\Omega$ is the base space of the bundle $E(\Omega)$, $\pi:E(\Omega)\rightarrow \Omega$ is the natural projection defined by $\pi(\lambda,\mathbf{x})=\lambda$, and $E(\lambda)\triangleq\pi^{-1}(\lambda)=\mathfrak{g}(\lambda)$ is the fibre of the bundle $E(\Omega)$ at $\lambda\in\Omega$. Given arbitrary $\lambda_0\in\Omega$. If $\gamma:\Delta \rightarrow\mathcal{H}$ is an $\mathcal{H}$-valued holomorphic function on some certain neighborhood $\Delta$ of $\lambda_0$ satisfying $0\neq\gamma(\lambda)\in\mathfrak{g}(\lambda)$ for any $\lambda\in \Delta$, we say $\gamma$ is a holomorphic cross-section of the bundle $E(\Omega)$ over $\lambda_0$. Moreover, if $\Gamma$ is a collection of $n$ holomorphic $\mathcal{H}$-valued functions $\gamma_1, \gamma_2, \ldots, \gamma_n$ on $\Delta$ such that $\Gamma(\lambda)=\{\gamma_1(\lambda), \gamma_2(\lambda), \ldots, \gamma_n(\lambda)\}$ is a sequence of linear independent vectors and spans the fibre $E(\lambda)$ for any $\lambda\in \Delta$, we call $\Gamma$ a holomorphic cross-section frame of the bundle $E(\Omega)$ over $\lambda_0$. Obviously, the vector bundle $E(\Omega)$ is Hermitian holomorphic if and only if for any $\lambda_0\in\Omega$, there exists a holomorphic frame of the bundle $E(\Omega)$ over $\lambda_0$. If it does not cause confusion, we could use $E$ instead of $E(\Omega)$ in brief.

Let $E(\Omega)$ and $\widetilde{E}(\Omega)$ be two Hermitian holomorphic vector bundles over a domain $\Omega$ induced by two holomorphic curves $\mathfrak{g}: \Omega\rightarrow\mathcal{G}r(n,\mathcal{H})$ and $\widetilde{\mathfrak{g}}: \Omega\rightarrow\mathcal{G}r(\widetilde{n},\widetilde{\mathcal{H}})$, respectively. The direct sum of the two vector bundles $E(\Omega)$ and $\widetilde{E}(\Omega)$ is defined by
\[
E(\Omega) \oplus \widetilde{E}(\Omega)=\{(\lambda,\mathbf{v}); \ \lambda\in \Omega \ \text{and} \ \mathbf{v}\in E(\lambda)\oplus \widetilde{E}(\lambda)\}.
\]
Notice that $E(\Omega)\bigoplus\widetilde{E}(\Omega)$ is a $(n+\widetilde{n})$-dimensional Hermitian holomorphic vector bundles over $\Omega$ induced by the holomorphic curve $\mathfrak{g}\oplus\widetilde{\mathfrak{g}}: \Omega\rightarrow\mathcal{G}r(n+\widetilde{n},\mathcal{H}\oplus\widetilde{\mathcal{H}})$.

For a bounded linear operator $X:\mathcal{H}\rightarrow\widetilde{\mathcal{H}}$, denote
\[
XE(\Omega)=\{(\lambda,X(\mathbf{x})); \ \lambda\in \Omega \ \text{and} \ \mathbf{x}\in E(\lambda)\}.
\]
If there is a bounded invertible linear operator $X$ such that $XE(\Omega)=\widetilde{E}(\Omega)$, we say that the two Hermitian holomorphic vector bundles $E(\Omega)$ and $\widetilde{E}(\Omega)$ are similar and $X$ is a similarity deformation from $E(\Omega)$ to $\widetilde{E}(\Omega)$. In particular, if the above $X$ is a unitary operator, we say that $E(\Omega)$ and $\widetilde{E}(\Omega)$ are unitarily equivalent and $X$ is a unitary deformation. Obviously, if two Hermitian holomorphic vector bundles are similar, they must have the same dimension.

A bounded linear operator $T$ is said to be invariant on the Hermitian holomorphic vector bundle $E(\Omega)$, if for any $\lambda\in \Omega$,
\[
TE(\lambda)\subseteq E(\lambda).
\]
We say that the invariant deformation algebra of the vector bundle $E(\Omega)$, denoted by $\mathcal{L}_{inv}(E(\Omega))$, is the set of all invariant bounded linear operators on the Hermitian holomorphic vector bundle $E(\Omega)$. It is not difficult to see that $\mathcal{L}_{inv}(E(\Omega))$ is a Banach algebra. Moreover, if the idempotents in $\mathcal{L}_{inv}(E(\Omega))$ are just the trivial ones~---~the zero operator $\mathbf{0}$ and the identity operator $\mathbf{I}$, the vector bundle $E(\Omega)$ is said to be indecomposable, which means $E(\Omega)$ can not be similar to a direct sum of two non trivial vector bundles.

The vector bundles induced by mappings are usually studied. For instance, the pull-back bundle is a classical research object. In the present paper, we are interested in the bundles induced in a push-forwards way. Compared with the pull-back bundles, well-defined push-forwards bundles usually require more conditions, see \cite{Harts} and \cite{For} for examples. Now, we give the definition of push-forwards Hermitian holomorphic vector bundles as follows.

\begin{definition}\label{PFVB}
Let $\Lambda$ be a domain and $h$ be a  non-constant analytic function on $\Lambda$. Suppose that $\Omega$ is a domain in $h(\Lambda)$. Let $E(\Lambda)$ and $E(\Omega)$ be two Hermitian holomorphic vector bundles over $\Lambda$ and $\Omega$, respectively. If for each $\omega\in\Omega$,
\[
E(\omega)=\textrm{span}\{E(\lambda);~ \lambda\in h^{-1}(\omega) \},
\]
then $E(\Omega)$ is said to be the push-forwards Hermitian holomorphic vector bundles induced by $h$ from $E(\Lambda)$.
\end{definition}

The similarity classification of the Hermitian holomorphic vector bundles is naturally an important problem. Notice that similarity equivalence is much stronger than bundle isomorphism equivalence. As well known, every holomorphic vector bundle over a domain in $\mathbb{C}$ is trivial in the sense of bundle isomorphism. However, the similarity classification of the Hermitian holomorphic vector bundles is very complicated.

In \cite{CD}, M. Cowen and R. Douglas studied the eigenvector bundles of a class of geometric operators from a complex geometric perspective. This class of operators, named Cowen-Douglas operators now, connects operator theory and complex geometry.

\begin{definition}[\cite{CD}]
For $\Omega$ a connected open subset of $\mathbb{C}$ and $n$ a
positive integer, let $\mathbf {B}_{n}(\Omega)$ denote the set of the
operators $T$ in $\mathcal{L}(\mathcal{H})$ which satisfy:
\begin{enumerate}
\item[(a)] $\Omega \subseteq \sigma(T)=\{\omega \in \mathbb{C}: \ T-\omega\mathbf{I}
 \ \text{not \ invertible}\}$;
\item[(b)] ${\textrm Ran}(T-\omega\mathbf{I})=\mathcal{H}$ for every $\omega$ in $\Omega$;
\item[(c)] $\textrm{Span}\{{\textrm Ker}(T-\omega\mathbf{I}); {\omega\in \Omega}\}=\mathcal{H}$, where $\textrm{Span}\{\cdot\}$ denotes the closed linear span of some certain subset of $\mathcal{H}$;
\item[(d)] $\dim {\textrm Ker}(T-\omega\mathbf{I})=n$ for every $\omega$ in $\Omega$.
\end{enumerate}
\end{definition}

The backward (weighted) shift operator is a fundamental and important example of Cowen-Douglas operator. More precisely, let $H^2_{\beta}$ be a weighted Hardy space with $\beta_{n+1}/\beta_n\rightarrow 1$ and
let $S_{\beta}$ be the standard left inverse of the multiplication operator $M_z$ on $H^2_{\beta}$, defined by
\[
S_{\beta}(f(z))=\frac{f(z)-f(0)}{z}, \ \ \ \text{for every} \ f(z)\in H^2_{\beta}.
\]
Notice that $S_{\beta}$ is the backward shift on $H^2_{\beta}$ under the orthogonal base $\{z^k\}_{k=0}^{\infty}$ and $S_{\beta}\in\mathbf {B}_{1}(\mathbb{D})$. Then, each $S_{\beta}$ induces a Hermitian holomorphic vector bundle $E_{S_{\beta}}(\mathbb{D})$,
\[
E_{S_{\beta}}(\mathbb{D})=\{(\lambda,\mathbf{x}); \ \lambda\in \mathbb{D} \ \text{and} \ \mathbf{x}\in \textrm{Ker}(S_{\beta}-\lambda\mathbf{I})\}.
\]

Furthermore, each analytic function on the closed unit disk will naturally induce a Hermitian holomorphic vector bundle $E_{h(S_{\beta})}(\Omega)$, where $\Omega$ is a domain in $h(\mathbb{D})\setminus h(\mathbb{T})$,
\[
E_{h(S_{\beta})}(\Omega)=\{(\omega,\mathbf{x}); \ \omega\in \Omega \ \text{and} \ \mathbf{x}\in \textrm{Ker}(h(S_{\beta})- \omega\mathbf{I})\}.
\]

In 1978, Cowen and Douglas \cite{CD} gave the unitary classification of $\mathbf {B}_{n}(\Omega)$ by Riemann curvature, and they asked whether the ratio of the curvatures is the similarity invariant. In 1983, Clark and Misra \cite{CM83} gave a negative answer to this conjecture, and their counterexample is a backward unilateral shift operator. Up to now, the similarity classification of Hermitian holomorphic vector bundles and Cowen-Douglas operators attracts the attention of numerous researchers. For instance, Clark, Misra and Uchiyama studied the operators similar to backward shift operators in \cite{Mis}, \cite{CM85}, \cite{CM} and \cite{U90}. C. Jiang et. al. used the K-theory of Banach algebras to give the similarity classification of holomorphic curves and Cowen-Douglas operators in a series of articles \cite{J04}, \cite{JGJ}, \cite{Ji}, \cite{JL}, \cite{JJ07}. More recently, Jiang, Ji and Keshali \cite{JJK23} gave a geometric similarity invariant for a class of Cowen-Douglas operators in 2023. Another useful method comes from the techniques in solving the famous (operator value) Corona Problem. In particular, it plays an important role of the techniques in Harmonic analysis and measure theory in the article \cite{TW09} of Treil and Wick. Then, Kwon and Treil \cite{KT09} characterized the contractions similar to the backward shift on the Hardy space. Furthermore, Douglas, Kwon and Treil \cite{D13} characterized the $n$-hypercontraction backward shift on the vector Bergman space. However, the similarity classification of Hermitian holomorphic vector bundles or Cowen-Douglas operators remains problematic.

A natural topic is to study the similarity classification of the Hermitian holomorphic vector bundles $E_{h(S_{\beta})}$. It is certainly an important issue in this topic of the Kaplansky's second test problem for those Hermitian holomorphic vector bundle with respect to similarity equivalence. In addition, if the analytic function on the closed unit disk is an analytic automorphism $\varphi(z)$, is $E_{\varphi(S_{\beta})}$ similar to  $E_{S_{\beta}}$? If the analytic function on the closed unit disk is a finite Blaschke product $B(z)$ with order $m$, is $E_{B(S_{\beta})}$ similar to $\bigoplus_{j=1}^{m}E_{S_{\beta}}$? This is a geometric bundles version of a problem proposed by R. Douglas \cite{D07} in 2007. Notice that the above questions are related to the weights of weighted Hardy space $H^2_{\beta}$. For instance, if $H^2_{\beta}$ is just the classical Hardy space $H^2$, it is well-known that $E_{\varphi(S_{\beta})}$ is unitary to $E_{S_{\beta}}$. However, in general case, $E_{\varphi(S_{\beta})}$ is not unitary to $E_{S_{\beta}}$ and it is not clear whether they are similar.

In this paper, we answer the above questions. We give a Jordan decomposition theorem for the push-forwards Hermitian holomorphic vector bundles. Furthermore, we give the similarity classification of those push-forwards Hermitian holomorphic vector bundles induced by analytic functions, and give an affirmative answer to Kaplansky's second test problem for those objects. In Section 2, we state the main theory of this paper. We review the weighted Hardy spaces and propose the concept of polynomial growth, and then we show that the vector bundle $E_{h(S_{\beta})}$ is a push-forwards Hermitian holomorphic vector bundle and state the main results. In Section 3, we use base theory and K-theory of Banach algebra (invariant deformation algebra) to prove the main theorems. In the last section, we give some remarks and applications. More precisely, we discuss the case of intermediate growth, and as applications, we answer the problem proposed by R. Douglas in 2007 and obtain the $K_0$-group of the commutant algebra of a multiplication operator on a weighted Hardy space of polynomial growth.

\section{The statement of main theory}

\subsection{Weighted Hardy spaces}
Denote by $\textrm{Hol}(\mathbb{D})$ the space of all analytic functions on the unit open disk $\mathbb{D}$.
For any $f\in \textrm{Hol}(\mathbb{D})$, denote the Taylor expansion of $f(z)$ by
\[
f(z)=\sum\limits_{k=0}^{\infty}\widehat{f}(k)z^k.
\]
In this paper, we introduce the weighted Hardy space from a given weight sequence.
Let $\beta=\{\beta_k\}_{k=0}^{\infty}$ be a sequence of positive numbers with $\beta_0=1$. For $k=1, 2, \cdots$, write
\[
w_k=\frac{\beta_k}{\beta_{k-1}}.
\]
The weighted Hardy space $H^2_{\beta}$ induced by the weight sequence $\beta$ (or $w$) is defined by
\[
H^2_{\beta}=\{f(z)=\sum\limits_{k=0}^{\infty}\widehat{f}(k)z^k; \ \sum\limits_{k=0}^{\infty}|\widehat{f}(k)|^{2}{\beta}_k^2<\infty\}.
\]
Moreover, the weighted Hardy space $H^2_{\beta}$ is a complex separable Hilbert space, on which the inner product is defined by,  for any $f,g \in H^2_{\beta}$,
\[
\langle  f, g\rangle_{H^2_{\beta}}=\sum\limits_{k=0}^{\infty}{\beta}_k^2\overline{\widehat{g}(k)}\widehat{f}(k).
\]
Then, any $f\in H^2_{\beta}$ has the following norm
\[
\|f(z)\|_{H^2_{\beta}}=\sqrt{\langle  f, f\rangle_{H^2_{\beta}}}=\sqrt{\sum\limits_{k=0}^{\infty}{\beta}_k^2|\widehat{f}(k)|^{2}}.
\]
In particular, $\|z^n\|_{H^2_{\beta}}={\beta}_n$ for each $n\in \mathbb{\mathbb{N}}$. Let $H^2_{\beta}$ and $H^2_{\beta'}$ be two weighted Hardy spaces. If there are positive constants $K_1$ and $K_2$ such that $K_1\leq {\beta'_k}/{\beta_k}\leq K_2$ for all $k\in\mathbb{N}$, then $H^2_{\beta}=H^2_{\beta'}$ and their norms are equivalent, and hence we say that $H^2_{\beta}$ and $H^2_{\beta'}$ are equivalent weighted Hardy spaces.

Furthermore, any $f\in H^2_{\beta}$ induces a multiplication operator $M_f$, defined by
\[
M_f(g)=f\cdot g, \ \  \ \ \text{for \ any }\ g\in H^2_{\beta}.
\]
Define $H^{\infty}_{\beta}$ by the set
\[
H^{\infty}_{\beta}=\{f(z)\in H^2_{\beta}; \ M_f \ \text{is  a  bounded  operator  from} \ H^2_{\beta} \ \text{to} \ H^2_{\beta}\}.
\]
In the present article, we always assume the weight sequence $w$ satisfying
\[
\lim\limits_{k\rightarrow\infty}w_k=1.
\]
The above condition is natural. The classical Hardy space, the weighted Bergman spaces, and the weighted Dirichlet spaces are all the weighted Hardy spaces satisfying the above condition.
In this case, it is not difficult to see that
\[
\textrm{Hol}(\overline{\mathbb{D}})\subseteq H^{\infty}_{\beta}\subseteq H^2_{\beta} \subseteq \textrm{Hol}(\mathbb{D}),
\]
where $\textrm{Hol}(\overline{\mathbb{D}})$ is the space of all analytic functions on the unit closed disk $\overline{\mathbb{D}}$.

Following from the work of Shields \cite{Shi}, to study the multiplication operator $M_z$ on $H^{2}_{\beta}$ is equivalent to study the forward unilateral weighted shift $S^{for}_{w}$ on the classical Hardy space, where $S^{for}_{w}:H^2\rightarrow H^2$ is defined by
\[
S^{for}_{w}(z^k)=w_{k+1}z^{k+1}, \ \ \ \text{for} \ k=0,1,2,\ldots.
\]
Notice that $w_k\rightarrow 1$ implies the spectrum of $S^{for}_{w}$, denoted by $\sigma(S_w)$, is the unit closed disk $\overline{\mathbb{D}}$. Then, for any $f\in \textrm{Hol}(\overline{\mathbb{D}})$, one can see that the analytic function calculus $f(S_w)$ on $H^2$ is corresponding to the multiplication operator $M_f$ on $H^{2}_{\beta}$. Consequently, we have $\sigma(M_f)=f(\overline{\mathbb{D}})$. Therefore, $M_f$ is a bounded operator on $H^{2}_{\beta}$ and moreover, $M_f$ is lower bounded on $H^{2}_{\beta}$ if and only if $f$ has no zero point in $\partial\mathbb{D}$. In addition, $S^{for}_{w}$ is a right inverse of $S_{\beta}$.

Let $\mathcal{H}$ be a complex separable Hilbert space. An algebra $\mathcal{A}$ of operators on $\mathcal{H}$ is called strictly
cyclic if there exists a vector $f_0$ such that $\mathcal{A}f_0=\mathcal{H}$. Furthermore, a bounded linear operator $T$ is called strictly cyclic, if the weakly closed algebra generated by $T$ is strictly cyclic. In particular, the multiplication operator $M_z$ is strictly cyclic on a weighted Hardy space $H^{2}_{\beta}$ if and only if $H^{\infty}_{\beta}=H^{2}_{\beta}$ (Proposition 31 in \cite{Shi}).

Let $H^2_{\alpha}$ and $H^2_{\beta}$ be two weighted Hardy spaces. Define the linear operator $T_{\alpha,\beta}:~H^2_{\alpha}\rightarrow H^2_{\beta}$ by
\[
T_{\alpha,\beta}(f(z))=\sum\limits_{k=0}^{\infty}\frac{\widehat{f}(k)\alpha_k}{\beta_k}z^k, \ \ \ \ \ \text{for any} \ f(z)=\sum\limits_{k=0}^{\infty}\widehat{f}(k)z^k\in H^2_{\alpha}.
\]
Obviously, the operator $T_{\alpha,\beta}$ is an isometry from $H^2_{\alpha}$ to $H^2_{\beta}$.

The authors have introduced the growth types of weighted Hardy spaces in a previous paper \cite{HJ}.

\begin{definition}[\cite{HJ}]\label{growth}
Let $w=\{w_k\}_{k=1}^{\infty}$  be a sequence of positive numbers with $w_k\rightarrow 1$.
\begin{enumerate}
 \item \ If $\sup_{k}(k+1)|w_k-1|<\infty$, we say that the weighted Hardy space $H^2_{\beta}$ is of polynomial growth.
 \item \ If $\sup_{k}(k+1)|w_k-1|=\infty$, we say that the weighted Hardy space $H^2_{\beta}$ is of intermediate growth.
\end{enumerate}
\end{definition}

\begin{remark}\label{growthremark}
It is easy to see that the condition $\sup_{k}(k+1)|w_k-1|<\infty$ holds if and only if there exists a positive number $M$ such that for each $k\in\mathbb{N}$,
\[
\frac{k+1}{k+M+1}\leq w_k \leq \frac{k+M+1}{k+1}.
\]
Moreover, such weighted Hardy space $H^2_{\beta}$ is said to be of $M$-polynomial growth. Roughly speaking, the polynomial growth condition for a weighted Hardy space $H^2_{\beta}$ implies that $\beta_n$ is controlled by a monomial of $(n+1)$ as an upper bound and a monomial of $\frac{1}{n+1}$ as a lower bound.

Notice that the classical Hardy space, the weighted Bergman spaces, the weighted Dirichlet spaces and the Sobolev space are all the weighted Hardy spaces of polynomial growth.
\end{remark}

\noindent \textbf{Notation.} \ For any $f=\sum\limits^{\infty}_{k=0}\widehat{f}(k)z^k\in\textrm{Hol}(\mathbb{D})$, denote
\[
f^{*}(z)=\overline{f(\overline{z})}=\sum\limits^{\infty}_{k=0}\overline{\widehat{f}(k)}z^k.
\]
Obviously, $f(z)\in\textrm{Hol}(\mathbb{D})$ if and only if $f^{*}(z)\in\textrm{Hol}(\mathbb{D})$.

Denote by $\textrm{Aut}(\mathbb{D})$ the analytic automorphism group on $\mathbb{D}$, which is the set of all analytic bijections from $\mathbb{D}$ to itself. As well known, each $\varphi\in\textrm{Aut}(\mathbb{D})$ is a M\"{o}bius transformation which could be written as the following form
\[
\varphi(z)={\textrm e}^{\mathbf{i}\theta}\cdot \frac{z_0-z}{1-\overline{z_0}z}, \ \ \ \text{for some} \ \theta\in \mathbb{R} \ \text{and} \ z_0\in\mathbb{D}.
\]
Moreover, a Blaschke product $B(z)$ on $\mathbb{D}$ with order $m$ is a product of $m$ M\"{o}bius transformations, i.e.,
\[
B(z)=\textrm{e}^{\mathbf{i}\theta}\cdot\prod^m_{j=1}\frac{z_{j}-z}{1-\overline{z_{j}}z},
\]
where $\theta\in \mathbb{R}$ and $z_j\in\mathbb{D}$ for $j=1, \cdots, m$. Obviously, $B(z)$ is a Blaschke product with order $m$ (or a M\"{o}bius transformation) if and only if $B^{*}(z)$ is a Blaschke product with order $m$ (or a M\"{o}bius transformation).

\subsection{Push-forwards Hermitian holomorphic vector bundles}

Given any $h\in\textrm{Hol}(\overline{\mathbb{D}})$. Let $\Omega$ be a connected component of the open set $h(\mathbb{D})\setminus h(\mathbb{T})$. Obviously, $\Omega$ is a connected open subset in $\mathbb{C}$. Following from basic complex analysis, it is not difficult to see that for any $\omega\in\Omega$, the sum of the multiplicities of zeros of $h(\lambda)-\omega$ in $\mathbb{D}$ is the same integer, denoted by $n$. From a point view of the Riesz-Dunford functional calculus for bounded linear operators (see \cite{CHM} for example), one can see that the above $n$ is just the Fredholm index of $h(S_{\beta})- \omega\mathbf{I}$ for any $\omega\in\Omega$. A bounded operator $T$ on a Hilbert space $\mathcal{H}$ is said to be Fredholm, if $\textrm{dim}\textrm{Ker} ~T$ and $\textrm{dim}\textrm{Ker} ~T^*$ are finite. Moreover, the Fredholm index of $T$ is defined by
\[
\textrm{ind} ~T=\textrm{dim}\textrm{Ker} ~T-\textrm{dim}\textrm{Ker} ~T^*.
\]
For convenience, we say that the operator $h(S_{\beta})$ has index $n$ on the domain $\Omega$.

Notice that the open set $h(\mathbb{D})\setminus h(\mathbb{T})$ may have several connected components and the operator $h(S_{\beta})$ may have different indices on distinct components. For example, let $h(\lambda)=\lambda^2+\lambda+2$, then as shown in Figure \ref{figure1}, $h(S_{\beta})$ has index $1$ on the domain $\Omega_1$ (the red domain) and has index $2$ on the domain $\Omega_2$ (the yellow domain).
\begin{figure}[h]
		\centering
		\includegraphics[width=0.8\textwidth]{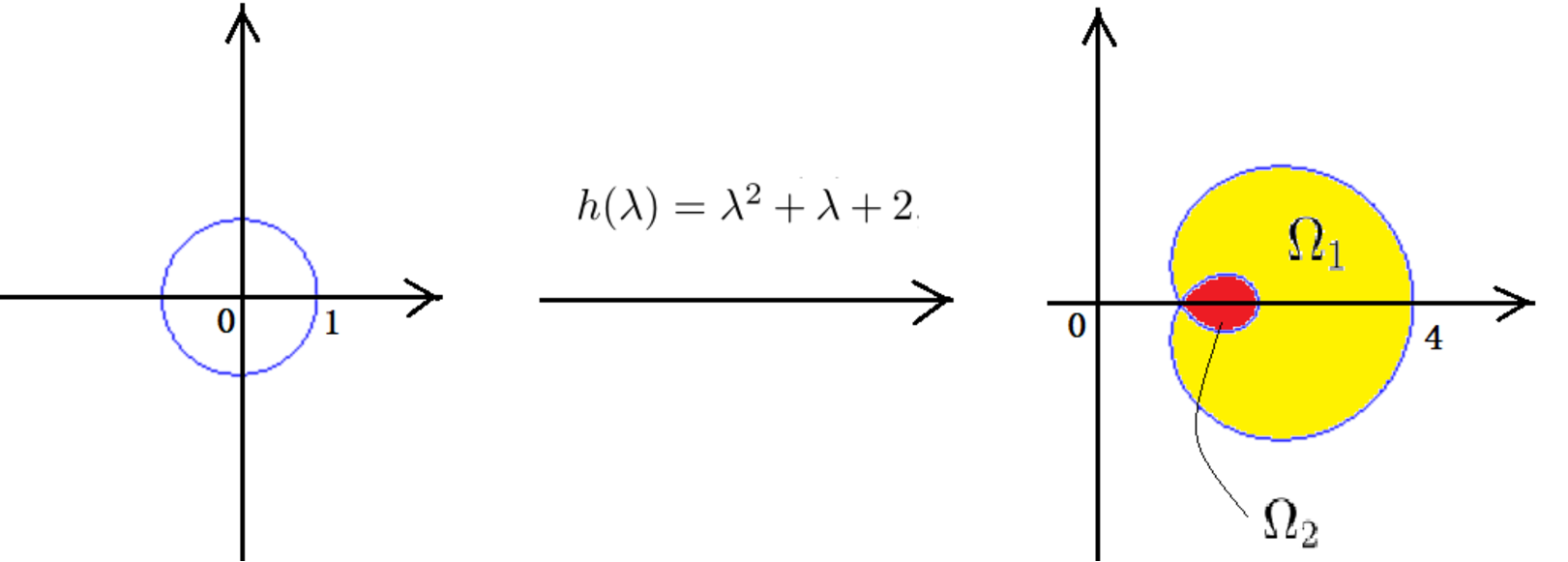}
		\caption{The image of $h(\lambda)=\lambda^2+\lambda+2$}
		\label{figure1}
\end{figure}	

From now on, let $h$ be an analytic function on $\overline{\mathbb{D}}$ and let $\Omega$ be a connected component in $h(\mathbb{D})\setminus h(\mathbb{T})$ with index $n$ which is the Fredholm index of $h(S_{\beta})$ on $\Omega$. For a point $\lambda\in\mathbb{D}$, if $h'(\lambda)=0$, we say that $\lambda$ is a branched point of $h$. Denote by $\mathcal{Z}(h')$ the set of all branched points of $h$. Moreover, let $\widetilde{\Omega}=\Omega\setminus h(\mathcal{Z}(h'))$. Since $\mathcal{Z}(h')$ is a finite set, $\widetilde{\Omega}$ is also a domain. Notice that for any $\omega\in\widetilde{\Omega}$, $h(\lambda)-\omega$ has just $n$ distinct zeros $\lambda_1, \ldots, \lambda_n$ in $\mathbb{D}$. Moreover, there is a neighborhood $V$ of $\omega$ and neighborhood $U_i$ of $\lambda_i$ for $i=1, \ldots, n$ such that $h:U_i\rightarrow V$ is bi-holomorphic. So, we say that $\widetilde{\Omega}$ is an unbranched domain.

Recall that
\[
E_{h(S_{\beta})}(\Omega)=\{(\omega,\mathbf{x}); \ \omega\in \Omega \ \text{and} \ \mathbf{x}\in \textrm{Ker}(h(S_{\beta})- \omega\mathbf{I})\}.
\]
One can see that the vector bundle $E_{h(S_{\beta})}(\Omega)$ is induced by $h$ from the vector bundle $E_{S_{\beta}}(\mathbb{D})$.

\begin{lemma}\label{Pre-Push-forwards}
Let $h\in\textrm{Hol}(\overline{\mathbb{D}})$ and $\Omega$ be a domain in $h(\mathbb{D})\setminus h(\mathbb{T})$. Suppose that the operator $h(S_{\beta})$ has index $n$ on the domain $\Omega$. For any given $\omega\in\Omega$, let $z_i$ be the zero points of $h(z)-\omega$ with multiplicity $n_i$ for $i=1,\ldots, p$, and $n_1+\ldots+n_{p}=n$. Then the fibre of $E_{h(S_\beta)}(\Omega)$ at $\omega$ is $\textrm{span}\{\textrm{Ker}(S_\beta- z_i\mathbf{I})^{n_i};~ i=1, \ldots, p.\}$.
\end{lemma}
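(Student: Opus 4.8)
The plan is to reduce the statement to a purely algebraic primary-decomposition fact about the single operator $S_\beta$, after peeling off an invertible analytic factor from the symbol. First I would use basic complex analysis to factor $h-\omega$. Since $\omega\in\Omega\subseteq h(\mathbb{D})\setminus h(\mathbb{T})$, the function $h-\omega$ is analytic on a neighborhood of $\overline{\mathbb{D}}$ and its only zeros in $\overline{\mathbb{D}}$ are $z_1,\ldots,z_p$, all lying in $\mathbb{D}$ with multiplicities $n_1,\ldots,n_p$. Hence I can write
\[
h(z)-\omega=P(z)\,G(z),\qquad P(z)=\prod_{i=1}^{p}(z-z_i)^{n_i},
\]
where $G=(h-\omega)/P$ is analytic on a neighborhood of $\overline{\mathbb{D}}$ (the zeros divided out match the order of vanishing of $h-\omega$) and nowhere zero on $\overline{\mathbb{D}}$.

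Next I would push this factorization through the Riesz--Dunford analytic functional calculus for $S_\beta$. As $h-\omega$, $P$ and $G$ are all analytic on a neighborhood of $\sigma(S_\beta)=\overline{\mathbb{D}}$ and the calculus is a commutative homomorphism, we obtain $h(S_\beta)-\omega\mathbf{I}=P(S_\beta)G(S_\beta)=G(S_\beta)P(S_\beta)$. The whole point of the factorization is that $G$ does not vanish on $\overline{\mathbb{D}}$, so by the spectral mapping theorem $0\notin G(\overline{\mathbb{D}})=\sigma(G(S_\beta))$ and $G(S_\beta)$ is invertible. Since $G(S_\beta)$ commutes with $P(S_\beta)$ and is invertible, it maps $\textrm{Ker}\,P(S_\beta)$ bijectively onto itself, which yields
\[
\textrm{Ker}(h(S_\beta)-\omega\mathbf{I})=\textrm{Ker}\big(P(S_\beta)G(S_\beta)\big)=\textrm{Ker}\,P(S_\beta).
\]
The problem is thereby reduced to identifying the kernel of a polynomial evaluated at $S_\beta$.

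Finally I would invoke the classical primary decomposition. Because the $z_i$ are distinct, the cofactors $P_i(z)=\prod_{j\neq i}(z-z_j)^{n_j}$ have no common zero, so a B\'ezout identity $\sum_i q_i P_i=1$ holds; writing any $x\in\textrm{Ker}\,P(S_\beta)$ as $x=\sum_i q_i(S_\beta)P_i(S_\beta)x$ and using $(z-z_i)^{n_i}P_i(z)=P(z)$ shows that each summand lies in $\textrm{Ker}(S_\beta-z_i\mathbf{I})^{n_i}$, giving $\textrm{Ker}\,P(S_\beta)=\textrm{span}\{\textrm{Ker}(S_\beta-z_i\mathbf{I})^{n_i};\ i=1,\ldots,p\}$ and hence the claim. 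As a consistency check, since $S_\beta\in\mathbf{B}_1(\mathbb{D})$ each $S_\beta-z_i\mathbf{I}$ is surjective of Fredholm index $1$, so $\dim\textrm{Ker}(S_\beta-z_i\mathbf{I})^{n_i}=n_i$ and the fibre has the expected dimension $\sum_i n_i=n$. The only genuinely delicate point is the first step: one must check that dividing the analytic function $h-\omega$ by the polynomial $P$ leaves a factor that is analytic on a full neighborhood of $\overline{\mathbb{D}}$ and, crucially, nonvanishing on all of $\overline{\mathbb{D}}$. This is exactly where the hypothesis $\omega\notin h(\mathbb{T})$ (no zeros escaping to the boundary circle) is used, and it is what upgrades $G(S_\beta)$ from merely Fredholm to invertible, which the argument requires.
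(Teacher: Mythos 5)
Your proof is correct and follows essentially the same route as the paper: the paper factors $h-\omega$ as a function in $(H^{\infty})^{-1}\cap\textrm{Hol}(\overline{\mathbb{D}})$ times a finite Blaschke product with zeros $z_i$ of multiplicity $n_i$, then peels off the invertible part through the functional calculus exactly as you do with the polynomial $P$ and the nonvanishing factor $G$ (the two factorizations differ only by the invertible analytic factor $\prod_{i}(1-\overline{z_i}z)^{-n_i}$, so both reduce the fibre to the kernel of $\prod_i(S_\beta-z_i\mathbf{I})^{n_i}$). The only substantive difference is that you justify the final identification $\textrm{Ker}\,P(S_\beta)=\textrm{span}\{\textrm{Ker}(S_\beta-z_i\mathbf{I})^{n_i};\ i=1,\ldots,p\}$ via a B\'ezout identity, a primary-decomposition step the paper's proof asserts without argument.
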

\begin{proof}
Following from $h(z)-\omega\in\textrm{Hol}(\overline{\mathbb{D}})$, one can get its inner-outer factor decomposition
\[
h(z)-\omega=F(z)\prod\limits_{i=1}^{p}\left(\frac{z_i-z}{1-\overline{z_i}z} \right)^{n_i},
\]
where $F(z)\in (H^{\infty})^{-1}\bigcap\textrm{Hol}(\overline{\mathbb{D}})$ since $h(z)-\omega$ has no zeros on $\mathbb{T}$. Then,
\[
h(S_\beta)-\omega\mathbf{I}=F(S_\beta)\prod\limits_{i=1}^{p}({\mathbf{I}-\overline{z_i}S_\beta})^{-n_i}({z_i\mathbf{I}-S_\beta})^{n_i}.
\]
Since $F(S_\beta)\prod\limits_{i=1}^{p}({\mathbf{I}-\overline{z_i}S_\beta})^{-n_i}$ is a bounded invertible operator, we have
\[
E_{h(S_\beta)}(\omega)=\textrm{Ker}(h(S_{\beta})- \omega\mathbf{I})=\textrm{span}\{\textrm{Ker}(S_\beta- z_i\mathbf{I})^{n_i};~ i=1, \ldots, p.\}.
\]
\end{proof}
In particular, if $\Omega$ is an unbranched domain, the vector bundle $E_{h(S_{\beta})}(\widetilde{\Omega})$ is just a push-forwards vector bundle induced by $h$ from the vector bundle $E_{S_{\beta}}(\mathbb{D})$.

\begin{lemma}[\cite{JZ}]\label{mmatrix}
Let $z_1, \cdots, z_{m}$ be $m$ distinct points in $\mathbb{D}$.
Then the matrix
\[
\begin{bmatrix}
\frac{1}{1-|z_1|^2} & \frac{1}{1-\overline{z_1}z_2} & \frac{1}{1-\overline{z_1}z_3}  & \cdots & \frac{1}{1-\overline{z_1}z_m} \\
\frac{1}{1-\overline{z_2}z_1}   & \frac{1}{1-|z_2|^2} & \frac{1}{1-\overline{z_2}z_3}  & \cdots & \frac{1}{1-\overline{z_2}z_m}  \\
\frac{1}{1-\overline{z_3}z_1}  & \frac{1}{1-\overline{z_3}z_2}   & \frac{1}{1-|z_3|^2}  & \cdots  & \frac{1}{1-\overline{z_3}z_m} \\
\vdots   & \vdots & \vdots & \ddots &\vdots \\
\frac{1}{1-\overline{z_m}z_1}   & \frac{1}{1-\overline{z_m}z_2} & \frac{1}{1-\overline{z_m}z_3} & \cdots &\frac{1}{1-|z_m|^2} \\
\end{bmatrix}
\]
is invertible.
\end{lemma}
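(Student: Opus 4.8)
The plan is to recognize the given matrix as a Gram matrix of reproducing kernels in the classical Hardy space $H^2$, and then invoke the standard equivalence between invertibility of a Gram matrix and linear independence of the generating vectors. Recall that $H^2$ (with orthonormal basis $\{z^n\}$) carries the Szeg\H{o} reproducing kernel $k_w(z)=\frac{1}{1-\overline{w}z}=\sum_{n\ge 0}\overline{w}^{\,n}z^n$, characterized by the identity $\langle f,k_w\rangle_{H^2}=f(w)$ for every $f\in H^2$. A direct computation gives
\[
\langle k_{z_i},k_{z_j}\rangle_{H^2}=\sum_{n\ge 0}\overline{z_i}^{\,n}z_j^{\,n}=\frac{1}{1-\overline{z_i}z_j},
\]
so the matrix in the statement is precisely the Gram matrix $\big[\langle k_{z_i},k_{z_j}\rangle_{H^2}\big]_{i,j=1}^m$ of the family $\{k_{z_1},\dots,k_{z_m}\}$; in particular it is Hermitian and positive semidefinite.

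The Gram matrix of any finite family of vectors in a Hilbert space is positive semidefinite, and it is invertible precisely when the vectors are linearly independent. Hence it suffices to show that $k_{z_1},\dots,k_{z_m}$ are linearly independent whenever the points $z_1,\dots,z_m$ are distinct. Suppose $\sum_{i=1}^m c_i k_{z_i}=0$. Pairing against an arbitrary $f\in H^2$ and using the reproducing property yields $\sum_{i=1}^m \overline{c_i}\,f(z_i)=0$ for all $f$. Since the $z_i$ are distinct, Lagrange interpolation furnishes polynomials $p_j\in H^2$ with $p_j(z_i)=\delta_{ij}$; substituting $f=p_j$ forces $c_j=0$ for each $j$. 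Thus the kernels are linearly independent, the Gram matrix is positive definite, and in particular invertible.

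There is essentially no serious obstacle here; the only genuine content is the linear-independence step, which the distinctness hypothesis delivers immediately. For completeness I would note two alternative routes. First, writing $\frac{1}{1-\overline{z_i}z_j}=\sum_{n\ge 0}\overline{z_i}^{\,n}z_j^{\,n}$ exhibits the matrix as $V^*V$, where $V$ is the $\infty\times m$ array with columns $(z_j^n)_{n\ge 0}$; distinctness makes these columns independent, so $V^*V$ is positive definite. Second, one may compute the determinant explicitly via the Cauchy-type identity
\[
\det\!\left[\frac{1}{1-\overline{z_i}z_j}\right]_{i,j=1}^{m}=\frac{\prod_{1\le i<j\le m}|z_i-z_j|^2}{\prod_{i,j=1}^{m}(1-\overline{z_i}z_j)},
\]
which is manifestly nonzero for distinct points. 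I would favor the Gram-matrix argument as the main proof since it is the shortest and most conceptual; if instead the determinant route is taken, the one computational point requiring care is the cofactor/induction derivation of the product formula above.
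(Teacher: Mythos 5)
Your proof is correct. There is nothing in the paper to compare it against: the lemma is imported by citation from \cite{JZ}, and the paper supplies no argument of its own. Your main route --- recognizing the matrix as the Gram matrix $\bigl[\langle k_{z_i},k_{z_j}\rangle_{H^2}\bigr]_{i,j=1}^{m}$ of the Szeg\H{o} kernels $k_{z_i}(z)=\frac{1}{1-\overline{z_i}z}$, and reducing invertibility to linear independence of kernels at distinct points, which Lagrange interpolation gives immediately --- is the standard proof of this fact and is complete as written. The two alternatives you sketch are also sound: the factorization as $V^*V$ with Vandermonde-type columns $(z_j^n)_{n\geq 0}$ (these lie in $\ell^2$ since $|z_j|<1$), and the Cauchy determinant identity
\[
\det\left[\frac{1}{1-\overline{z_i}z_j}\right]_{i,j=1}^{m}
=\frac{\prod_{1\le i<j\le m}|z_i-z_j|^2}{\prod_{i,j=1}^{m}\left(1-\overline{z_i}z_j\right)},
\]
whose numerator is nonzero exactly because the points are distinct. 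Any one of the three suffices; your choice of the Gram-matrix argument as primary is reasonable, since it uses only the reproducing property and no determinant computation.
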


\begin{lemma}\label{Push-forwards}
Let $h\in\textrm{Hol}(\overline{\mathbb{D}})$ and $\Omega$ be a domain in $h(\mathbb{D})\setminus \left(h(\mathbb{T})\bigcup h(\mathcal{Z}(h'))\right)$. Suppose that the operator $h(S_{\beta})$ has index $n$ on the domain $\Omega$. Then for any $\omega\in\Omega$, the fibre $E_{h(S_{\beta})}(\omega)$ is an $n$-dimensional vector space and
\[
E_{h(S_{\beta})}(\omega)=\textrm{span}\{E_{S_{\beta}}(z_i);~z_i\in h^{-1}(\omega), \ i=1, \ldots, n.\}.
\]
\end{lemma}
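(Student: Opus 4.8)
The plan is to reduce the statement to an unramified count of preimages together with a linear-independence argument, using Lemma~\ref{Pre-Push-forwards} for the span identity and Lemma~\ref{mmatrix} for the dimension.

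First I would fix $\omega\in\Omega$ and extract the consequence of the hypothesis that $\Omega$ avoids $h(\mathcal{Z}(h'))$: since $\omega$ is not the image of any branched point, every zero of $h(z)-\omega$ lying in $\mathbb{D}$ is simple. Because the operator $h(S_{\beta})$ has index $n$ on $\Omega$, and this index equals the sum of the multiplicities of the zeros of $h(z)-\omega$ in $\mathbb{D}$ (as recorded before Lemma~\ref{Pre-Push-forwards}), simplicity forces $h(z)-\omega$ to have exactly $n$ distinct zeros $z_1,\ldots,z_n\in\mathbb{D}$; that is, $h^{-1}(\omega)\cap\mathbb{D}=\{z_1,\ldots,z_n\}$ with all $n_i=1$. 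Substituting $n_1=\cdots=n_n=1$ into Lemma~\ref{Pre-Push-forwards} immediately yields
\[
E_{h(S_{\beta})}(\omega)=\textrm{span}\{\textrm{Ker}(S_\beta-z_i\mathbf{I});~i=1,\ldots,n\}=\textrm{span}\{E_{S_{\beta}}(z_i);~z_i\in h^{-1}(\omega)\},
\]
which is the displayed identity. Thus the span half of the lemma is essentially free once the branched points are excluded.

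The only remaining point is that this span is genuinely $n$-dimensional rather than something smaller. Here I would compute the eigenvectors explicitly: solving $S_\beta g=z_i g$ in $H^2_{\beta}$ gives the one-dimensional fibre $E_{S_{\beta}}(z_i)=\textrm{Ker}(S_\beta-z_i\mathbf{I})=\textrm{span}\{g_{z_i}\}$ with $g_{z_i}(z)=\sum_{k\geq 0}z_i^k z^k=(1-z_iz)^{-1}$, which lies in $H^2_{\beta}$ because $|z_i|<1$ while $w_k\to 1$. It then suffices to show that $g_{z_1},\ldots,g_{z_n}$ are linearly independent. I would do this by exhibiting their Gram-type matrix: pairing coefficientwise gives $\sum_{k}z_i^k\overline{z_j^k}=(1-z_i\overline{z_j})^{-1}$, so the relevant matrix is precisely the Cauchy matrix of Lemma~\ref{mmatrix} (up to transpose), which is invertible since the $z_i$ are distinct. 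Because linear independence of a finite family is an algebraic property insensitive to the choice of inner product, the $g_{z_i}$ are independent in $H^2_{\beta}$ as well, so $\dim E_{h(S_{\beta})}(\omega)=n$.

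I do not anticipate a serious obstacle; the content lies entirely in correctly reading off the simple-zero structure from the branch-point hypothesis and then invoking Lemma~\ref{mmatrix}. The one place demanding care is the bookkeeping that identifies ``index $n$'' with ``$n$ distinct preimages,'' which rests on the equality of the Fredholm index with the total zero-multiplicity already established before Lemma~\ref{Pre-Push-forwards}.
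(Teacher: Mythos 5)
Your proposal is correct and follows essentially the same route as the paper: the span identity via Lemma~\ref{Pre-Push-forwards} (after noting that avoiding $h(\mathcal{Z}(h'))$ forces the $n$ zeros to be simple and distinct), and the dimension count via the explicit eigenvectors $(1-z_iz)^{-1}$ together with the invertible Cauchy-type matrix of Lemma~\ref{mmatrix}. Your extra care about the $H^2$ versus $H^2_{\beta}$ inner product and the transpose of the matrix only makes explicit what the paper leaves implicit.
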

\begin{proof}
Since $h(z)-\omega$ has $n$ distinct zero points in $\mathbb{D}$, it follows from Lemma \ref{Pre-Push-forwards} that
\begin{align*}
E_{h(S_\beta)}(\omega)&=\textrm{span}\{\textrm{Ker}(S_\beta- z_i\mathbf{I});~z_i\in h^{-1}(\omega), \ i=1, \ldots, n.\} \\
&=\textrm{span}\{E_{S_{\beta}}(z_i);~z_i\in h^{-1}(\omega), \ i=1, \ldots, n.\}.
\end{align*}
Notice that $E_{S_{\beta}}(z_i)=\{\frac{\lambda}{1-{z_i}z};~\lambda\in\mathbb{C}\}$ for $i=1, \ldots, n$. By Lemma \ref{mmatrix}, the vectors $\{\frac{1}{1-{z_1}z}, \cdots, \frac{1}{1-{z_n}z}\}$ are linear independent and then the fibre $E_{h(S_{\beta})}(\omega)$ is an $n$-dimensional vector space.
\end{proof}

\begin{lemma}\label{leftin}
Suppose that $H^2_{\beta}$ is the weighted Hardy space induced by a weight sequence $w=\{w_k\}_{k=1}^{\infty}$ with $w_k\rightarrow 1$. Let $B(z)=\prod^m_{j=1}\frac{z_{j}-z}{1-\overline{z_{j}}z}$ be a Blaschke product on $\mathbb{D}$ with order $m$.
Then $B^{*}(S_{\beta})\in \mathbf {B}_m(\mathbb{D})$ and $B^{*}(S_{\beta})B(M_z)={\textbf I}$.
\end{lemma}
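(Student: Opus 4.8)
The plan is to establish the two assertions in turn, beginning with the membership $B^{*}(S_{\beta})\in\mathbf{B}_m(\mathbb{D})$ and then proving the algebraic identity. For the membership I would verify the four defining conditions (a)--(d) directly. Since $B^{*}(z)=\overline{B(\overline{z})}$ is again a Blaschke product of order $m$, it lies in $\textrm{Hol}(\overline{\mathbb{D}})$ and maps $\mathbb{D}$ onto $\mathbb{D}$ and $\mathbb{T}$ onto $\mathbb{T}$; hence the spectral mapping theorem gives $\sigma(B^{*}(S_{\beta}))=B^{*}(\sigma(S_{\beta}))=B^{*}(\overline{\mathbb{D}})=\overline{\mathbb{D}}\supseteq\mathbb{D}$, which is (a). For a fixed $\omega\in\mathbb{D}$ the function $B^{*}(z)-\omega$ has no zero on $\mathbb{T}$ and exactly $m$ zeros (with multiplicity) in $\mathbb{D}$, so the inner--outer factorization used in the proof of Lemma \ref{Pre-Push-forwards} gives $B^{*}(S_{\beta})-\omega\mathbf{I}=G\prod_i(z_i\mathbf{I}-S_{\beta})^{n_i}$ with $G$ bounded invertible and $\sum_i n_i=m$. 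Surjectivity of each $S_{\beta}-z_i\mathbf{I}$ (condition (b) for $S_{\beta}\in\mathbf{B}_1(\mathbb{D})$) then yields (b), while Lemma \ref{Pre-Push-forwards} identifies $\textrm{Ker}(B^{*}(S_{\beta})-\omega\mathbf{I})=\textrm{span}\{\textrm{Ker}(S_{\beta}-z_i\mathbf{I})^{n_i}\}$; since the generalized eigenspaces of the backward shift at distinct points are independent and $\dim\textrm{Ker}(S_{\beta}-z_i\mathbf{I})^{n_i}=n_i$, the total dimension is $m$, giving (d). Finally every $\textrm{Ker}(S_{\beta}-z\mathbf{I})$ with $z\in\mathbb{D}$ sits inside $\textrm{Ker}(B^{*}(S_{\beta})-B^{*}(z)\mathbf{I})$, so condition (c) for $S_{\beta}$ upgrades to (c) for $B^{*}(S_{\beta})$.

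For the identity, I would first note that the functional calculus turns $B(M_z)$ into the multiplication operator $M_B$, since each Möbius factor $\frac{z_j-z}{1-\overline{z_j}z}$ lies in $\textrm{Hol}(\overline{\mathbb{D}})$ and $(\mathbf{I}-\overline{z_j}M_z)^{-1}=M_{(1-\overline{z_j}z)^{-1}}$. Thus it suffices to prove $B^{*}(S_{\beta})M_B=\mathbf{I}$, and I would check this on the spanning family of eigenvectors $k_{w}(z)=\frac{1}{1-wz}$, $w\in\mathbb{D}$, which satisfy $S_{\beta}k_w=wk_w$ and hence $B^{*}(S_{\beta})k_w=B^{*}(w)k_w$. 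For generic $z_0$ the element $M_Bk_{z_0}=\frac{B(z)}{1-z_0z}$ is a proper rational function whose partial-fraction expansion reads $B(1/z_0)\,k_{z_0}+\sum_j c_j\,k_{\overline{z_j}}$, the coefficient of $k_{z_0}$ being $B(1/z_0)$ (obtained by clearing $1-z_0z$ and evaluating at $z=1/z_0$). Applying $B^{*}(S_{\beta})$ and invoking the two elementary facts $B^{*}(\overline{z_j})=0$ (the $j$-th factor of $B^{*}$ vanishes there) and the Blaschke reflection identity $B(1/z_0)\,B^{*}(z_0)=1$, every term except the $k_{z_0}$ term is annihilated and $B^{*}(S_{\beta})M_Bk_{z_0}=B(1/z_0)B^{*}(z_0)k_{z_0}=k_{z_0}$. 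Since $\{k_{z_0}\}$ has dense linear span (an $f$ orthogonal to all of them forces $\sum_k\beta_k^2\overline{\widehat{f}(k)}z_0^k\equiv0$, hence $f=0$), and discarding the finitely many degenerate $z_0$ leaves a family with the same dense span, boundedness of both operators yields $B^{*}(S_{\beta})M_B=\mathbf{I}$.

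I expect the reflection identity to be the crux of the argument: once one recognizes that the relevant partial-fraction coefficient equals $B(1/z_0)$ and that $B(1/z_0)B^{*}(z_0)=1$ for Blaschke products, the computation collapses. The remaining care is bookkeeping --- restricting to the $z_0$ for which the poles of $B$ do not collide with $1/z_0$, so that the expansion has only simple poles, and excluding $z_0=0$, both of which are harmless by the density and continuity of $z_0\mapsto k_{z_0}$. The membership part, by contrast, is essentially a repackaging of Lemma \ref{Pre-Push-forwards} together with the spectral mapping theorem and the $\mathbf{B}_1$ properties of $S_{\beta}$.
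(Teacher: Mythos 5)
Your proposal is correct in outline, but the route you take to the identity is genuinely different from the paper's, and it has one real gap. The paper proves $B^{*}(S_{\beta})B(M_z)=\mathbf{I}$ by a purely algebraic, factor-by-factor computation: for a single M\"{o}bius factor $\varphi_{\alpha}$ it expands $\varphi_{\alpha}^{*}(S_{\beta})\varphi_{\alpha}(M_z)$ using the Neumann series $(\mathbf{I}-\alpha S_{\beta})^{-1}=\sum_k(\alpha S_{\beta})^k$ and the single relation $S_{\beta}M_z=\mathbf{I}$, obtains $\varphi_{\alpha}^{*}(S_{\beta})\varphi_{\alpha}(M_z)=\mathbf{I}$, and then telescopes the product (all functions of $S_{\beta}$ commute among themselves, as do all functions of $M_z$). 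You instead test the identity on the dense family of eigenvectors $k_{z_0}$, using partial fractions and the reflection identity $B(1/z_0)B^{*}(z_0)=1$; this is more concrete and explains geometrically why the identity holds (it exhibits how $M_B$ scatters an eigenvector of $S_{\beta}$ into eigenvectors that $B^{*}(S_{\beta})$ either rescales back or kills), but it costs you a density argument and genericity bookkeeping that the paper's computation avoids entirely. For the membership part the paper simply says it is ``easy to see''; your verification of (a)--(d) via the spectral mapping theorem and the factorization of Lemma \ref{Pre-Push-forwards} is the expected filling-in and is fine.

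The gap: your partial-fraction expansion $B(1/z_0)\,k_{z_0}+\sum_j c_j\,k_{\overline{z_j}}$ assumes the poles of $B$ are simple, i.e.\ that the $z_j$ are distinct, which the lemma does not assume. If some $z_j$ repeats with multiplicity $\mu_j>1$, then $\frac{B(z)}{1-z_0z}$ has a pole of order $\mu_j$ at $1/\overline{z_j}$, and no choice of $z_0$ removes it --- your hedge about restricting to $z_0$ whose reciprocal avoids the poles of $B$ addresses a different degeneracy. The expansion then contains terms $\frac{c_{j,k}}{(1-\overline{z_j}z)^{k}}$ with $2\le k\le\mu_j$, and the fact $B^{*}(\overline{z_j})=0$ alone does not annihilate them under $B^{*}(S_{\beta})$: you need that $\frac{1}{(1-\overline{z_j}z)^{k}}\in\textrm{Ker}(S_{\beta}-\overline{z_j}\mathbf{I})^{k}$ and that $B^{*}(S_{\beta})$ factors, through commuting bounded operators, as an invertible operator times $\prod_j(\overline{z_j}\mathbf{I}-S_{\beta})^{\mu_j}$, so that the factor with exponent $\mu_j\ge k$ kills each such term. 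This is exactly the factorization already used in Lemma \ref{Pre-Push-forwards}, so the repair is routine, but as written your ``two elementary facts'' do not cover repeated zeros; the paper's telescoping computation is indifferent to multiplicities, which is one of its advantages.
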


\begin{proof}
Notice that $S_{\beta}\in\mathbf {B}_1(\mathbb{D})$ and $B^{*}(z)$ is also a Blaschke product on $\mathbb{D}$ with order $m$. Then it is easy to see $B^{*}(S_{\beta})\in \mathbf {B}_m(\mathbb{D})$.

For any $\alpha\in\mathbb{D}$, let $\varphi_{\alpha}(z)=\frac{\alpha-z}{1-\overline{\alpha}z}$. Then, it follows from $S_{\beta}M_z={\textbf I}$ that
\begin{align*}
\varphi_{\alpha}^{*}(S_{\beta})\varphi_{\alpha}(M_z)
&=(\overline{\alpha}-S_{\beta})\left(\sum\limits_{k=0}^{\infty}(\alpha S_{\beta})^k\right)(\alpha-M_z)({\textbf I}-\overline{\alpha}M_z)^{-1} \\
&=(\overline{\alpha}-S_{\beta})(-M_z)({\textbf I}-\overline{\alpha}M_z)^{-1} \\
&={\textbf I}.
\end{align*}
Since
$B(z)=\prod^m_{j=1}\varphi_{z_{j}}(z)$,
we have
\[
B^{*}(S_{\beta})B(M_z)=\prod\limits_{j=1}^{m}\varphi_{z_{j}}^{*}(S_{\beta})\prod\limits_{j=1}^{m}\varphi_{z_{j}}(M_z)={\textbf I}.
\]
\end{proof}

\begin{proposition}\label{PFHHVB}
Let $h\in\textrm{Hol}(\overline{\mathbb{D}})$ and $\Omega$ be a domain in $h(\mathbb{D})\setminus \left(h(\mathbb{T})\bigcup h(\mathcal{Z}(h'))\right)$. Then,
$E_{h(S_{\beta})}(\Omega)$ is a push-forwards Hermitian holomorphic vector bundle induced by $h$ from the vector bundle $E_{S_{\beta}}(\mathbb{D})$.
\end{proposition}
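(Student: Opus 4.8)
The plan is to read off the required properties from the two preceding lemmas and supply the single missing ingredient, namely a local holomorphic frame. By Definition \ref{PFVB} I must verify two things: that $E_{h(S_\beta)}(\Omega)$ is a genuine Hermitian holomorphic vector bundle over $\Omega$, and that its fibre over each $\omega$ equals $\textrm{span}\{E_{S_\beta}(\lambda);\ \lambda\in h^{-1}(\omega)\}$. The span condition is exactly the content of Lemma \ref{Push-forwards}, which moreover identifies the fibre as an $n$-dimensional space (with $n$ the constant index of $h(S_\beta)$ on the connected set $\Omega$, as established earlier in the section). Thus the fibre assignment defines a map $\mathfrak{g}\colon\Omega\to\mathcal{G}r(n,H^2_\beta)$, and the entire burden of the proof is to show that $\mathfrak{g}$ is a holomorphic curve, i.e.\ that local holomorphic frames exist.

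First I would record the elementary fact that the eigenvectors of $S_\beta$ are the functions $k_\lambda(z)=\frac{1}{1-\lambda z}$, so that $E_{S_\beta}(\lambda)=\mathbb{C}\,k_\lambda$, and that $\lambda\mapsto k_\lambda$ is a holomorphic $H^2_\beta$-valued function on $\mathbb{D}$. Indeed $k_\lambda=\sum_{j\ge 0}\lambda^j z^j$ is a power series in $\lambda$ with coefficients $z^j\in H^2_\beta$, and the hypothesis $w_k\to 1$ forces $\beta_j^{1/j}\to 1$, so the series converges in the $H^2_\beta$-norm for $|\lambda|<1$ and depends holomorphically on $\lambda$. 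This is precisely what makes $E_{S_\beta}(\mathbb{D})$ a Hermitian holomorphic line bundle and supplies the source bundle for the push-forwards construction.

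Next I would build the frame. Fix $\omega_0\in\Omega$; since $\Omega$ avoids $h(\mathcal{Z}(h'))$, the equation $h(\lambda)=\omega_0$ has $n$ distinct simple roots $\lambda_1,\dots,\lambda_n\in\mathbb{D}$ with $h'(\lambda_i)\ne 0$. By the inverse function theorem there are a neighbourhood $V\subseteq\Omega$ of $\omega_0$ and holomorphic local inverse branches $\psi_i\colon V\to\mathbb{D}$ satisfying $\psi_i(\omega_0)=\lambda_i$ and $h\circ\psi_i=\mathrm{id}$. Setting $\gamma_i(\omega)=k_{\psi_i(\omega)}$ for $i=1,\dots,n$, each $\gamma_i$ is holomorphic $H^2_\beta$-valued on $V$ by the previous paragraph. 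For every $\omega\in V$ the vectors $\gamma_1(\omega),\dots,\gamma_n(\omega)$ are the eigenvectors $k_{\psi_i(\omega)}$ at the $n$ distinct points $\psi_1(\omega),\dots,\psi_n(\omega)$, hence linearly independent by Lemma \ref{mmatrix}, and by Lemma \ref{Push-forwards} they span $E_{h(S_\beta)}(\omega)$. Therefore $\{\gamma_1,\dots,\gamma_n\}$ is a holomorphic frame of $E_{h(S_\beta)}(\Omega)$ over $\omega_0$.

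Since $\omega_0$ was arbitrary, $\mathfrak{g}$ is a holomorphic curve and $E_{h(S_\beta)}(\Omega)$ is a Hermitian holomorphic vector bundle; together with the span identity of Lemma \ref{Push-forwards} this is exactly the assertion that $E_{h(S_\beta)}(\Omega)$ is the push-forwards bundle induced by $h$ from $E_{S_\beta}(\mathbb{D})$. I expect the only real (and mild) obstacle to be the holomorphy bookkeeping: confirming in the $H^2_\beta$-norm that $\lambda\mapsto k_\lambda$ is holomorphic and that post-composition with the branches $\psi_i$ preserves this, together with the observation that the labeling of the branches, though only locally consistent, is harmless since Definition \ref{PFVB} and the notion of Hermitian holomorphic bundle require only frames over each point rather than a single global frame.
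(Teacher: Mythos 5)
Your proof is correct, but it follows a genuinely different route from the paper's. The paper never invokes the inverse function theorem: it fixes $\omega_0\in\Omega$, uses the factorization from Lemma \ref{Pre-Push-forwards}, $h(S_\beta)-\omega_0\mathbf{I}=F(S_\beta)B^{*}(S_\beta)$ with $F$ invertible and $B$ the finite Blaschke product vanishing at the preimages, and then writes down the frame sections as explicit operator power series
\[
\gamma_i(\omega)=\sum\limits_{k=0}^{\infty}(F(S_\beta))^{-k}M^k_{B}\left(\frac{1}{1-\overline{z_i}z}\right)(\omega-\omega_0)^k,
\]
verifying that they lie in $\textrm{Ker}(h(S_\beta)-\omega\mathbf{I})$ by a telescoping computation based on the left-inverse identity $B^{*}(S_{\beta})B(M_z)={\textbf I}$ of Lemma \ref{leftin}. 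You instead compose the holomorphic eigenvector field $\lambda\mapsto k_\lambda$ with local inverse branches $\psi_i$ of $h$; this is more elementary and makes geometrically transparent why the hypothesis $\Omega\cap h(\mathcal{Z}(h'))=\emptyset$ is exactly what is needed (it is what produces single-valued holomorphic branches), at the cost of requiring the holomorphy bookkeeping for $\lambda\mapsto k_\lambda$ that you correctly carry out. The paper's operator-theoretic construction, by contrast, is manifestly a power series in $\omega$, needs no inverse function theorem, and rests on a factorization (Lemma \ref{Pre-Push-forwards}) that remains valid at branch points with multiplicities, so it is the version that has room to generalize beyond the unbranched setting. One small point you should make explicit: to conclude from Lemma \ref{Push-forwards} that $\gamma_1(\omega),\dots,\gamma_n(\omega)$ span the fibre, you need that the $n$ distinct points $\psi_1(\omega),\dots,\psi_n(\omega)$ are \emph{all} of $h^{-1}(\omega)\cap\mathbb{D}$; this is immediate because the total number of preimages (with multiplicity) is constantly $n$ on $\Omega$ and $\omega$ avoids the branch values, but as written it is silently assumed.
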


\begin{proof}
Suppose that $h(S_\beta)$ has index $n$ on the domain $\Omega$. For any $\omega_0\in\Omega$, $h(z)-\omega_0$ has $n$ distinct zero points $\overline{z_1}, \cdots , \overline{z_n}$ in $\mathbb{D}$. Then, by Lemma \ref{Pre-Push-forwards},
\[
h(S_\beta)-\omega_0\mathbf{I}=F(S_\beta)B^{*}(S_\beta),
\]
where $F(z)\in (H^{\infty})^{-1}\bigcap\textrm{Hol}(\overline{\mathbb{D}})$ and $B(z)=\prod^n_{i=1}\frac{z_{i}-z}{1-\overline{z_{i}}z}$ ($B^{*}(z)=\prod^n_{i=1}\frac{\overline{z_{i}}-z}{1-z_{i}z}$).
By Lemma \ref{Push-forwards}, the fibre $E_{h(S)}(\omega)$ is an $n$-dimensional vector space, more precisely,
\[
E_{h(S_\beta)}(\omega)=\textrm{span}\{\frac{1}{1-\overline{z_i}z};~z_i\in h^{-1}(\omega_0), \ i=1, \cdots, n.\}.
\]
Let
\[
\gamma_i(\omega)=\sum\limits_{k=0}^{\infty}(F(S_\beta))^{-k}M^k_{B}\left(\frac{1}{1-\overline{z_i}z}\right)(\omega-\omega_0)^k \ \ \ \ \text{for} \ i=1, \cdots, n.
\]
Then, each $\gamma_i(\omega)$ is an $H^2_\beta$ valued analytic function on a neighborhood $\Delta$ of $\omega_0$ and $\{\gamma_1(\omega), \cdots, \gamma_n(\omega)\}$ are linear independent for every $\omega\in\Delta$. Since for any $\omega\in\Delta$ and  any $i=1, \cdots, n$, by Lemma \ref{leftin},
\begin{align*}
&(h(S_\beta)-\omega\mathbf{I})(\gamma_i(\omega)) \\
=&((h(S_\beta)-\omega\mathbf{I})-(\omega-\omega_0))(\gamma_i(\omega)) \\
=&(F(S_\beta)B^*(S_\beta)-(\omega-\omega_0))
\left(\sum\limits_{k=0}^{\infty}(F(S_\beta))^{-k}M^k_{B}\left(\frac{1}{1-\overline{z_i}z}\right)(\omega-\omega_0)^k\right) \\
=&0,
\end{align*}
one can see that $\Gamma=\{\gamma_1(\omega), \cdots, \gamma_n(\omega)\}$ is a holomorphic cross-section frame of $E(\Omega)$ over $\Delta$. Therefore, $E_{h(S)}(\Omega)$ is a push-forwards Hermitian holomorphic vector bundle induced by $h$ from the vector bundle $E_S(\mathbb{D})$.
\end{proof}

\begin{remark}
For the above $h$, one can see that $h(S_{\beta})$ is a Cowen-Douglas operator on $\Omega$. More precisely, $h(S_{\beta})\in \mathbf {B}_{n}(\Omega)$.
\end{remark}

\subsection{Main results}

Review that $S_\beta\in\mathbf{B}_{1}(\mathbb{D})$ and for any domain $\Delta$ in $\mathbb{D}$, it also holds that $S_\beta\in\mathbf{B}_{1}(\Delta)$. That is a property of local rigidity, i.e., locality determines the whole. The previously mentioned push-forwards Hermitian holomorphic vector bundles also have local rigidity.

\begin{lemma}[Local rigidity]\label{localrig}
Let $h_1, h_2\in \textrm{Hol}(\overline{\mathbb{D}})$. Suppose that $E_{h_1(S_{\beta})}(\Omega_0)$ and  $E_{h_2(S_{\beta})}(\Omega_0)$ over a domain $\Omega_0$ are similar. Then, $h_1(\mathbb{D})\setminus h_1(\mathbb{T})=h_2(\mathbb{D})\setminus h_2(\mathbb{T})$ and for any domain $\Omega\subseteq h_1(\mathbb{D})\setminus h_1(\mathbb{T})$, $E_{h_1(S_{\beta})}(\Omega)$ and  $E_{h_2(S_{\beta})}(\Omega)$ are similar.
\end{lemma}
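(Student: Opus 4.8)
The plan is to lift the hypothesized bundle similarity over $\Omega_0$ to an intertwining relation between the operators $h_1(S_\beta)$ and $h_2(S_\beta)$, and then read off both conclusions from the resulting operator similarity. Suppose $X$ is a bounded invertible operator implementing the similarity, so that $X\,\textrm{Ker}(h_1(S_\beta)-\omega\mathbf{I})=\textrm{Ker}(h_2(S_\beta)-\omega\mathbf{I})$ for every $\omega\in\Omega_0$. First I would fix an eigenvector $v\in\textrm{Ker}(h_1(S_\beta)-\omega\mathbf{I})$ with $\omega\in\Omega_0$; then $Xv$ lies in the $\omega$-eigenspace of $h_2(S_\beta)$, so $h_2(S_\beta)Xv=\omega Xv=X h_1(S_\beta)v$, whence $(h_2(S_\beta)X-X h_1(S_\beta))v=0$. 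The next step is to observe that such eigenvectors span a dense subspace of $\mathcal{H}$: if $f\perp\textrm{Ker}(h_1(S_\beta)-\omega\mathbf{I})$ for all $\omega\in\Omega_0$, then evaluating $f$ against a holomorphic frame of $E_{h_1(S_\beta)}$ produces anti-holomorphic functions vanishing on the open set $\Omega_0$, hence vanishing on the whole connected component by the identity theorem; condition (c) in the Cowen--Douglas definition then forces $f=0$. Since $h_2(S_\beta)X-X h_1(S_\beta)$ is bounded and vanishes on a dense set, we obtain the intertwining $X h_1(S_\beta)=h_2(S_\beta)X$, i.e. the two operators are similar.

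From operator similarity I would extract the two spectral invariants needed for the first conclusion. Applying the spectral mapping theorem to the analytic functional calculus and using $\sigma(S_\beta)=\overline{\mathbb{D}}$ gives $\sigma(h_i(S_\beta))=h_i(\overline{\mathbb{D}})$; since similar operators share their spectrum, $h_1(\overline{\mathbb{D}})=h_2(\overline{\mathbb{D}})$. For the finer information I would pass to the Calkin algebra: because the weights satisfy $w_k\to 1$, the operator $S_\beta$ is a compact perturbation of the unweighted backward shift, so $\sigma_e(S_\beta)=\mathbb{T}$, and spectral mapping in the Calkin algebra then yields $\sigma_e(h_i(S_\beta))=h_i(\mathbb{T})$. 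The image $\pi(X)$ of $X$ is invertible in the Calkin algebra and intertwines $\pi(h_1(S_\beta))$ with $\pi(h_2(S_\beta))$, so these have equal spectra, giving $h_1(\mathbb{T})=h_2(\mathbb{T})$. Combining with the elementary identity $h_i(\mathbb{D})\setminus h_i(\mathbb{T})=h_i(\overline{\mathbb{D}})\setminus h_i(\mathbb{T})$, which holds since $h_i(\overline{\mathbb{D}})=h_i(\mathbb{D})\cup h_i(\mathbb{T})$, immediately yields $h_1(\mathbb{D})\setminus h_1(\mathbb{T})=h_2(\mathbb{D})\setminus h_2(\mathbb{T})$, the first assertion.

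The second assertion follows from the same intertwining operator with no further work. Indeed, $X h_1(S_\beta)=h_2(S_\beta)X$ with $X$ invertible forces $X\,\textrm{Ker}(h_1(S_\beta)-\omega\mathbf{I})=\textrm{Ker}(h_2(S_\beta)-\omega\mathbf{I})$ for every $\omega\in\mathbb{C}$, not merely for $\omega\in\Omega_0$. Hence for any domain $\Omega\subseteq h_1(\mathbb{D})\setminus h_1(\mathbb{T})=h_2(\mathbb{D})\setminus h_2(\mathbb{T})$, on which both $h_1(S_\beta)-\omega\mathbf{I}$ and $h_2(S_\beta)-\omega\mathbf{I}$ are Fredholm (since $\Omega$ avoids the common essential spectrum $h_i(\mathbb{T})$), the single operator $X$ satisfies $X E_{h_1(S_\beta)}(\Omega)=E_{h_2(S_\beta)}(\Omega)$, exhibiting the required similarity.

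The step I expect to be the main obstacle is the passage from bundle similarity to operator intertwining, and in particular the density of the eigenvectors taken over the possibly small domain $\Omega_0$. This is where the holomorphic structure of the bundle and condition (c) must be combined through a uniqueness argument for vector-valued analytic functions; once the intertwining is in hand, the remaining spectral bookkeeping and the transport of fibres are routine.
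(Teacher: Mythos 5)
Your proof is correct and follows the same overall skeleton as the paper's: pass from the bundle similarity over $\Omega_0$ to the operator intertwining $Xh_1(S_{\beta})=h_2(S_{\beta})X$ by verifying it on the span of the fibres over $\Omega_0$ (which is dense), then transport kernels through $X$ to every $\omega$. Two sub-steps are handled differently, and the comparison is worth recording. For the density, the paper argues by pulling back to $S_{\beta}$: by Lemma~\ref{Pre-Push-forwards} each fibre $E_{h_1(S_{\beta})}(\omega)$ contains $E_{S_{\beta}}(\lambda)$ for all $\lambda\in h_1^{-1}(\omega)$, and since $S_{\beta}\in\mathbf{B}_1(\Delta)$ for any domain $\Delta\subseteq h_1^{-1}(\Omega_0)$, these kernels already span $H^2_{\beta}$; you instead propagate orthogonality from $\Omega_0$ to the whole component via local holomorphic frames and the identity theorem, then invoke condition (c) of the Cowen--Douglas definition for $h_1(S_{\beta})$. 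This is valid, but note that condition (c) for $h_1(S_{\beta})$ on the component is itself established by exactly the paper's pull-back argument, so your route is a detour; also, since frames are only local, the identity-theorem step strictly requires the open-and-closed chaining argument you only gesture at. Conversely, for the equality $h_1(\mathbb{D})\setminus h_1(\mathbb{T})=h_2(\mathbb{D})\setminus h_2(\mathbb{T})$ the paper merely writes ``consequently,'' whereas you supply an actual justification: $\sigma(h_i(S_{\beta}))=h_i(\overline{\mathbb{D}})$ by spectral mapping, $\sigma_e(h_i(S_{\beta}))=h_i(\mathbb{T})$ because $S_{\beta}$ is a compact perturbation of the unweighted backward shift, and operator similarity (also in the Calkin algebra, via $\pi(X)$) preserves both spectra, so $h_i(\mathbb{D})\setminus h_i(\mathbb{T})=h_i(\overline{\mathbb{D}})\setminus h_i(\mathbb{T})$ must agree. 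This spectral bookkeeping is the genuinely useful addition in your write-up, as it fills in a step the paper leaves implicit.
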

\begin{proof}
Suppose that $X$ is a bounded invertible operator such that $XE_{h_1(S_{\beta})}(\Omega_0)=E_{h_2(S_{\beta})}(\Omega_0)$.  Let $\Delta$ be a domain contained in $h_1^{-1}(\Omega_0)$. Since $S_\beta\in\mathbf{B}_{1}(\Delta)$, we have $\textrm{Span}\{E_{S_{\beta}}(\lambda);~\lambda\in\Delta\}=H^2_{\beta}$. Then, by Lemma \ref{Pre-Push-forwards}, we have $\textrm{Span}\{E_{h_i(S_{\beta})}(\omega); ~\omega\in\Omega_0\}=H^2_{\beta}$, $i=1,2$. Furthermore,  $XE_{h_1(S_{\beta})}(\Omega_0)=E_{h_2(S_{\beta})}(\Omega_0)$ implies that for any $\omega\in\Omega_0$ and any $\mathbf{x}\in E_{h_1(S_{\beta})}(\omega)$,
\[
Xh_1(S_{\beta})(\mathbf{x})=X(\omega\mathbf{x})=\omega X(\mathbf{x})=h_2(S_{\beta}) X(\mathbf{x}).
\]
Then, we obtain that $Xh_1(S_{\beta})=h_2(S_{\beta}) X$ and consequently $h_1(\mathbb{D})\setminus h_1(\mathbb{T})=h_2(\mathbb{D})\setminus h_2(\mathbb{T})$. Furthermore, for any $\omega\in h_1(\mathbb{D})\setminus h_1(\mathbb{T})$ and any $\mathbf{y}\in E_{h_1(S_{\beta})}(\omega)$,
\[
h_2(S_{\beta})(X(\mathbf{y}))=Xh_1(S_{\beta})(\mathbf{y})=\omega X(\mathbf{y}).
\]
That means $XE_{h_1(S_{\beta})}(\omega)=E_{h_2(S_{\beta})}(\omega)$.
Therefore, for any domain $\Omega\subseteq h_1(\mathbb{D})\setminus h_1(\mathbb{T})$, the above $X$ is a similarity deformation from $E_{h_1(S_{\beta})}(\Omega)$ to  $E_{h_2(S_{\beta})}(\Omega)$. The proof is finished.
\end{proof}

When consider the similarity classification problem for the Hermitian holomorphic vector bundles induced by analytic functions on $\overline{\mathbb{D}}$, by the above Lemma \ref{localrig}, we could write the vector bundle in brief as $E_{h(S_{\beta})}$ without confusions, or sometimes we could write the vector bundle as $E_{h(S_{\beta})}(\Omega)$ over some certain domain $\Omega$ in $h(\mathbb{D})\setminus h(\mathbb{T})$.

In this paper, our aim is to determine whether the two push-forwards Hermitian holomorphic vector bundles $E_{h_1(S_{\beta})}$ and  $E_{h_2(S_{\beta})}$ are similar. In this study, an analogue of Jordan standard form for push-forwards Hermitian holomorphic vector bundle plays a major role.

\vspace{2mm}

\noindent \textbf{Main Theorem. \ (Jordan decomposition)} \
Suppose that $H^2_{\beta}$ is a weighted Hardy space of polynomial growth. Given any $f\in \textrm{Hol}(\overline{\mathbb{D}})$. There exist a unique positive integer $m$ and an analytic function $h\in \textrm{Hol}(\overline{\mathbb{D}})$ inducing an indecomposable Hermitian holomorphic vector bundle $E_{h(S_{\beta})}$, such that
\[
E_{f(S_\beta)}\sim\bigoplus\limits_1^m E_{h(S_\beta)},
\]
where $h$ is unique in the sense of analytic automorphism group action. That means if there exists another $g\in \textrm{Hol}(\overline{\mathbb{D}})$ inducing an indecomposable vector bundle $E_{g(S_{\beta})}$, such that
\[
E_{f(S_\beta)}\sim\bigoplus\limits_1^m E_{g(S_\beta)},
\]
then there is a M\"{o}bius transformation $\varphi\in \textrm{Aut}(\mathbb{D})$ such that $g=h\circ \varphi$.

\vspace{2mm}

Notice that the indecomposable Hermitian holomorphic vector bundle is an analogue of Jordan block and is unique in the sense of analytic automorphism group action. Furthermore, we give the similarity classification of those push-forwards Hermitian holomorphic vector bundles induced by analytic functions, and give an affirmative answer to Kaplansky's second test problem for those objects.

\vspace{2mm}

\noindent \textbf{Theorem A. \ (Similarity classification)} \
Suppose that $H^2_{\beta}$ is a weighted Hardy space of polynomial growth. Let $h_1, h_2\in \textrm{Hol}(\overline{\mathbb{D}})$. Then, the Hermitian holomorphic vector bundles $E_{h_1(S_{\beta})}$ and  $E_{h_2(S_{\beta})}$ are similar if and only if there are two finite Blaschke products $B_1$ and $B_2$ with the same order and a function $h \in \textrm{Hol}(\overline{\mathbb{D}})$  such that
\[
h_1=h\circ B_1 \ \ \ \text{and} \ \ \ h_2=h\circ B_2,
\]
where the vector bundle $E_{h(S_{\beta})}$ is indecomposable.

\vspace{2mm}

\noindent \textbf{Theorem B. \ (Answer to  Kaplansky's second test problem)} \
Suppose that $H^2_{\beta}$ is a weighted Hardy space of polynomial growth.  Let $h_1, h_2\in \textrm{Hol}(\overline{\mathbb{D}})$. If the direct sums of the Hermitian holomorphic vector bundles $E_{h_1(S_{\beta})}\oplus E_{h_1(S_{\beta})}$ and  $E_{h_2(S_{\beta})}\oplus E_{h_2(S_{\beta})}$ are similar, then $E_{h_1(S_{\beta})}$ and  $E_{h_2(S_{\beta})}$ are similar.

\section{Proof of the main results}

To prove the main results of this paper, we study the cross-section frames and invariant deformations for the push-forwards Hermitian holomorphic vector bundles in subsection \ref{3.1} and subsection \ref{3.2}, respectively. Then, we give the proof of the main theorems in subsection \ref{3.3}.

\subsection{Cross-section frames with Riesz base $\beta$-normalized coefficients}\label{3.1}

Suppose that $E(\Omega)$ is a Hermitian holomorphic vector bundle over a domain $\Omega$. Let $\lambda_0\in\Omega$ and let $\Gamma(\lambda)=\{\gamma_1(\lambda), \gamma_2(\lambda), \ldots, \gamma_n(\lambda)\}$ be a holomorphic cross-section frame of the bundle $E(\Omega)$ on a neighborhood $\Delta\subseteq\Omega$ of $\lambda_0$. For each $i=1,\ldots,n$, write
\[
\gamma_i(\lambda)=\sum\limits_{k=0}^{\infty}\widehat{\gamma_i}(j)(\lambda-\lambda_0)^j.
\]
Notice that each $\widehat{\gamma_i}(j)$ is a vector in the Hilbert space $H^2_{\beta}$. Then, we could write $\widehat{\gamma_i}(j)$ in the following form
\[
\widehat{\gamma_i}(j)=\sum\limits_{k=0}^{\infty}c^{(i,j)}_k z^k,
\]
where $c^{(i,j)}_k\in\mathbb{C}$. Then, we call the sequence of vectors
\[
\mathfrak{F}\triangleq\{\widehat{\gamma_i}(j);~i=1,\ldots,n, ~j=1,2,\ldots\}
\]
the coefficients of the cross-section frame $\Gamma$ over $\lambda_0$, and call the sequence of vectors
\[
\mathfrak{F}_{\beta}\triangleq\{\widehat{\gamma_i}(j)/\beta_j;~i=1,\ldots,n, ~j=1,2,\ldots\}
\]
the $\beta$-normalized coefficients of the cross-section frame $\Gamma$ over $\lambda_0$. Furthermore, if $\mathfrak{F}_{\beta}$ is a Riesz base, we call $\Gamma$ a cross-section frame over $\lambda_0$ with Riesz base $\beta$-normalized coefficients.

In the present paper, the base theory plays an important role. Let us review some basic concepts in base theory on Hilbert space firstly (we refer to \cite{Nik}, more generally, one can see these concepts on Banach space in \cite{Sing}).

\begin{definition}
Let $\mathfrak{X}=\{x_n\}_{n=0}^{\infty}$ be a sequence of vectors in a complex separable Hilbert space $\mathcal{H}$.
\begin{enumerate}
 \item[$(1)$]  The sequence $\mathfrak{X}$ is said to be a Bessel sequence with bound $M$ if for any $x\in\mathcal{H}$,
 \[
 \sum\limits_{n=0}^{\infty}|\langle x, x_n\rangle_{\mathcal{H}}|^2\leq M\|x\|^2.
 \]
 \item[$(2)$]  The sequence $\mathfrak{X}$ is said to be a Riesz-Fischer sequence with bound $m$ if for each sequence $\{c_n\}_{n=0}^{\infty}$ in $\ell^2$ which means the space of square summable sequences,  there corresponds to at least one $x\in\mathcal{H}$ for which
     \[
     \langle x, x_n\rangle_{\mathcal{H}}=c_n \ \ \ \text{and} \ \ \ \|x\|^2\geq m\sum\limits_{n=0}^{\infty}|c_n|^2.
     \]
 \item[$(3)$]  The sequence $\mathfrak{X}$ is said to be a Riesz base sequence if $\mathfrak{X}$ is both a Bessel sequence and a Riesz-Fischer sequence, i.e., there exist two positive numbers $C_1$ and $C_2$ such that for any $x\in\mathcal{H}$,
 \[
 C_1\|x\|^2\leq\sum\limits_{n=0}^{\infty}|\langle x, x_n\rangle_{\mathcal{H}}|^2\leq C_2\|x\|^2.
 \]
 \item[$(4)$]  The sequence $\mathfrak{X}$ is said to be total, if $\mathfrak{X}$ spans the whole space $\mathcal{H}$, i.e., $\textrm{Span}\{\mathfrak{X}\}=\mathcal{H}$.
 \item[$(5)$]  The sequence $\mathfrak{X}$ is said to be a Riesz base, if $\mathfrak{X}$ is a total Riesz base sequence.
 \item[$(5')$] There is also an equivalent definition of Riesz base. The sequence $\mathfrak{X}$ is a Riesz base if there exists a bounded invertible linear operator $V$ such that $\{V(x_n)\}_{n=0}^{\infty}$ is an orthonormal base, i.e., $V(x_m)$ is orthogonal to $V(x_n)$ for all $m\neq n$, and $\|V(x_n)\|=1$ for all $n=0,1,\ldots$.
\end{enumerate}
\end{definition}

Furthermore, following from Bari's work \cite{B51}, one can see an equivalent description of Riesz base by Gram matrix of the sequence.
Let $\mathfrak{X}=\{x_n\}_{n=0}^{\infty}$ and $\mathfrak{Y}=\{y_n\}_{n=0}^{\infty}$ be two sequences of vectors in a complex separable Hilbert space $\mathcal{H}$.
Define
\[
\langle  \mathfrak{X}, \mathfrak{Y}\rangle_{\mathcal{H}}=(\langle  x_j, y_i\rangle_{\mathcal{H}})_{i,j}.
\]
In particular, the Gram matrix of $\mathfrak{X}$ on $\mathcal{H}$ is defined by
\[
\textrm{Gram}_{\mathcal{H}}(\mathfrak{X})=\langle  \mathfrak{X}, \mathfrak{X}\rangle_{\mathcal{H}}=(\langle  x_i, x_j\rangle_{\mathcal{H}})_{i,j}.
\]

\begin{lemma}\label{Grambase}[Theorem 3 in \cite{Sha} and Proposition 9.13 in \cite{Nik}]
Let $\mathfrak{X}=\{x_n\}_{n=0}^{\infty}$ be a sequence of vectors in a complex separable Hilbert space $\mathcal{H}$.
\begin{enumerate}
 \item[$(1)$]  The sequence $\mathfrak{X}$ is a Bessel sequence with bound $M$ if and only if the Gram matrix $\textrm{Gram}_{\mathcal{H}}(\mathfrak{X})$ is a bounded operator on $\ell^2$ with bound $M$.
 \item[$(2)$]  The sequence $\mathfrak{X}$ is a Riesz-Fischer sequence with bound $m$ if and only if the Gram matrix $\textrm{Gram}_{\mathcal{H}}(\mathfrak{X})$ is bounded below on $\ell^2$ with bound $m$.
 \item[$(3)$]  The sequence $\mathfrak{X}$ is a Riesz base sequence if and only if the Gram matrix $\textrm{Gram}_{\mathcal{H}}(\mathfrak{X})$ acting on $\ell^2$ is bounded up and below.
 \item[$(4)$]  The sequence $\mathfrak{X}$ is a Riesz base if and only if $\mathfrak{X}$ is total and the Gram matrix $\textrm{Gram}_{\mathcal{H}}(\mathfrak{X})$ acting on $\ell^2$ is bounded up and below.
\end{enumerate}
\end{lemma}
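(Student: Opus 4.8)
The plan is to route all four equivalences through the \emph{synthesis} and \emph{analysis} operators attached to $\mathfrak{X}=\{x_n\}_{n=0}^{\infty}$, and to recognise the Gram matrix as the matrix of a single nonnegative quadratic form. On the dense subspace $c_{00}\subseteq\ell^2$ of finitely supported sequences I would define the synthesis map $W\colon c\mapsto\sum_n c_n x_n\in\mathcal{H}$, together with the analysis map $V\colon\mathcal{H}\to\ell^2$, $Vx=(\langle x,x_n\rangle_{\mathcal{H}})_n$. The bridge between the sequence conditions and the matrix conditions is the identity, valid for every $c\in c_{00}$,
\[
\Big\|\sum_n c_n x_n\Big\|_{\mathcal{H}}^2=\big\langle \textrm{Gram}_{\mathcal{H}}(\mathfrak{X})\,c,\,c\big\rangle_{\ell^2},
\]
which follows directly from the definition of $\textrm{Gram}_{\mathcal{H}}(\mathfrak{X})$ and the Hermitian symmetry of the inner product. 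Hence $G\triangleq\textrm{Gram}_{\mathcal{H}}(\mathfrak{X})$ is exactly the (Hermitian, nonnegative) matrix of the form $c\mapsto\|Wc\|_{\mathcal{H}}^2$, and the whole lemma reduces to the elementary dictionary relating a positive form to the one-sided boundedness of $G$ on $\ell^2$, once each sequence property has been rephrased as a one-sided bound on $W$.

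For $(1)$, I would observe that the Bessel condition $\sum_n|\langle x,x_n\rangle|^2\le M\|x\|^2$ says precisely that $V$ is bounded with $\|V\|^2\le M$; by duality $W$ extends to a bounded operator on all of $\ell^2$ with $W=V^{*}$, $W e_n=x_n$, and $\|W\|^2\le M$, which by the bridge identity is the same as $\langle Gc,c\rangle\le M\|c\|^2$ for all $c$. Since $G\ge 0$, the supremum of this form over the unit ball equals $\|G\|$, so this is exactly ``$G$ is bounded on $\ell^2$ with bound $M$''; reading the chain backwards recovers the Bessel bound.

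For $(2)$, the matrix condition ``$G$ bounded below with bound $m$'' unwinds, again through the bridge identity, to the lower synthesis estimate $\|\sum_n c_n x_n\|^2\ge m\|c\|^2$ for every finite $c$. The substantive point is the equivalence of this estimate with the Riesz--Fischer property, and here the two directions are of different character. One direction is a direct evaluation of the estimate on finitely supported data. For the converse I would argue by Hahn--Banach: given $d\in\ell^2$, the lower estimate first forces the $x_n$ to be $\ell^2$-independent, so the functional $\sum_n c_n x_n\mapsto\sum_n c_n\overline{d_n}$ is well defined on $\textrm{span}\{x_n\}$ and, by Cauchy--Schwarz together with the lower estimate, bounded there; extending it and invoking the Riesz representation theorem produces a vector $x\in\mathcal{H}$ with $\langle x,x_n\rangle=d_n$ for all $n$ and with the norm control demanded by the Riesz--Fischer bound $m$. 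This is the step that must be handled with care, because without the Bessel hypothesis $W$ need not be bounded, so the argument has to stay on the dense domain $c_{00}$ and work with the form rather than with the operator $W^{*}W$ directly.

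Finally, $(3)$ is the conjunction of $(1)$ and $(2)$: a Riesz base sequence is by definition both Bessel and Riesz--Fischer, matching ``$G$ bounded above and below''. Assertion $(4)$ then follows at once from $(3)$ together with the definition of a Riesz base as a total Riesz base sequence, so no work beyond adjoining the totality clause is required. I expect the only genuine obstacle to be the Riesz--Fischer half of $(2)$; everything else is the translation of one-sided bounds across the fixed identity $\|Wc\|^2=\langle Gc,c\rangle$ and the standard fact that, for a positive matrix, boundedness above and boundedness below of the associated form are precisely boundedness and bounded invertibility of $G$ as an operator on $\ell^2$.
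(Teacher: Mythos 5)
Your proof is correct, but note that there is no internal proof to compare it against: the paper imports this lemma verbatim from the literature (Theorem 3 in \cite{Sha} and Proposition 9.13 in \cite{Nik}) and never proves it. What you have written is essentially the classical argument found in those references: the bridge identity $\bigl\|\sum_n c_n x_n\bigr\|_{\mathcal{H}}^2=\langle Gc,c\rangle_{\ell^2}$ on finitely supported $c$; part $(1)$ via the duality $W=V^{*}$ between synthesis and analysis together with $\|G\|=\sup_{\|c\|\le 1}\langle Gc,c\rangle$ for positive $G$ (equivalently $G=W^{*}W$); the easy half of $(2)$ by evaluating the moment-problem solution against a finite combination and applying Cauchy--Schwarz; the substantive half of $(2)$ by the Hahn--Banach/Riesz-representation construction of a solution of $\langle x,x_n\rangle=d_n$; and $(3)$, $(4)$ as formal consequences. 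Your insistence on staying on $c_{00}$ and working with the quadratic form rather than with $W^{*}W$ in part $(2)$ is exactly the right precaution, since $W$ is unbounded precisely when the Bessel bound fails.

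Two caveats that you implicitly (and correctly) resolve are worth recording. First, your Hahn--Banach step produces a solution with $\|x\|^2\le\frac{1}{m}\|d\|^2$; this is the standard form of the Riesz--Fischer bound, and it exposes a slip in the paper's definition, where the inequality $\|x\|^2\geq m\sum_n|c_n|^2$ should read $\|x\|^2\leq\frac{1}{m}\sum_n|c_n|^2$. With the literal ``$\geq$'' reading, part $(2)$ is false: for $x_n=2e_n$ with $\{e_n\}$ an orthonormal base, the Gram matrix is $4\mathbf{I}$, hence bounded below with bound $4$, yet the unique solution of the moment problem satisfies $\|x\|^2=\frac{1}{4}\sum_n|c_n|^2$. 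Second, in part $(3)$ you take a Riesz base sequence to mean ``Bessel and Riesz--Fischer,'' which is the paper's primary definition; the ``i.e.'' clause attached to that definition (the two-sided inequality $C_1\|x\|^2\le\sum_n|\langle x,x_n\rangle|^2\le C_2\|x\|^2$) is a frame condition that is \emph{not} equivalent to it (a doubled orthonormal base $e_1,e_1,e_2,e_2,\ldots$ satisfies it while its Gram matrix has nontrivial kernel), and under that reading $(3)$ would fail. Your choice of the standard definitions is the one under which the lemma, and the uses made of it later in the paper, are valid.
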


The sequence $\mathfrak{X}$ is said to be topologically free if none of them is in the closed linear span of the others. A sequence $\mathfrak{X}'=\{x'_n\}_{n=0}^{\infty}$ is called the dual system of a topologically free sequence $\mathfrak{X}$ if
\[
\langle x_i, x'_j\rangle_{\mathcal{H}}=\delta_{ij}\triangleq\left\{
\begin{array}{ll}1, &i=j \\
0, &i\neq j \\
\end{array}\right.
\]
Moreover, if $\mathfrak{X}'\subseteq\textrm{Span}\{\mathfrak{X}\}$, we call $\mathfrak{X}'$ the minimal dual system of $\mathfrak{X}$. Notice that the minimal dual system is unique. In particular, if the sequence $\mathfrak{X}$ spans the whole space, its dual system is unique.
\begin{lemma}\label{dual}[Proposition 9.14 in \cite{Nik}]
Suppose that $\mathfrak{X}=\{x_n\}_{n=0}^{\infty}$ is a topologically free sequence of vectors in a complex separable Hilbert space $\mathcal{H}$ and $\mathfrak{X}'=\{x'_n\}_{n=0}^{\infty}$ is a dual system of $\mathfrak{X}$. If the Gram matrix $\textrm{Gram}_{\mathcal{H}}(\mathfrak{X}')$ is bounded, then the Gram matrix $\textrm{Gram}_{\mathcal{H}}(\mathfrak{X})$ is bounded below. The converse holds if $\mathfrak{X}$ is the minimal dual system of $\mathfrak{X}'$.
\end{lemma}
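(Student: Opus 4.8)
The plan is to reduce everything, via Lemma~\ref{Grambase}, to the dictionary between Gram matrices and the two sequence properties in play: $\textrm{Gram}_{\mathcal H}(\mathfrak X')$ bounded means $\mathfrak X'$ is a Bessel sequence, while $\textrm{Gram}_{\mathcal H}(\mathfrak X)$ bounded below means $\mathfrak X$ is a Riesz-Fischer sequence. Thus the two assertions become ``$\mathfrak X'$ Bessel $\Rightarrow$ $\mathfrak X$ Riesz-Fischer'' and, under the minimality hypothesis, its converse. The engine of both implications is the single dual-system identity $\langle x_i, x'_j\rangle_{\mathcal H}=\delta_{ij}$, which converts the synthesis of one sequence into the coordinate sequence tested against the other.

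For the forward implication I would fix a finitely supported scalar sequence $\{a_n\}$ and put $x=\sum_n a_n x_n$. Biorthogonality gives $\langle x, x'_j\rangle_{\mathcal H}=a_j$ for every $j$, so if $\textrm{Gram}_{\mathcal H}(\mathfrak X')$ is bounded then $\mathfrak X'$ is Bessel with some bound $M$ by Lemma~\ref{Grambase}(1), and
\[
\sum_j|a_j|^2=\sum_j|\langle x, x'_j\rangle_{\mathcal H}|^2\le M\|x\|^2=M\Big\|\sum_n a_n x_n\Big\|^2.
\]
Since $\|\sum_n a_n x_n\|^2=\langle\textrm{Gram}_{\mathcal H}(\mathfrak X)\,a,a\rangle_{\ell^2}$, this is exactly the statement that $\textrm{Gram}_{\mathcal H}(\mathfrak X)$ is bounded below with bound $1/M$, whence Lemma~\ref{Grambase}(2) yields the Riesz-Fischer property of $\mathfrak X$. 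This half needs nothing beyond biorthogonality and the Bessel inequality.

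For the converse I would study the analysis operator $V\colon\mathcal H\to\ell^2$, $Vy=\{\langle y, x'_n\rangle_{\mathcal H}\}_n$, whose boundedness is equivalent to $\textrm{Gram}_{\mathcal H}(\mathfrak X')$ being bounded. Biorthogonality gives $Vx_j=e_j$, so for a finite combination $w=\sum_j a_j x_j$ one has $Vw=\{a_j\}$, and the assumed lower bound (Riesz-Fischer property of $\mathfrak X$ with bound $m$) gives
\[
\|Vw\|_{\ell^2}^2=\sum_j|a_j|^2\le\frac1m\Big\|\sum_j a_j x_j\Big\|^2=\frac1m\|w\|^2.
\]
Hence $V$ is bounded on the linear span of $\mathfrak X$ with norm at most $1/\sqrt m$ and extends continuously to the closed span $\mathcal M=\textrm{Span}\{\mathfrak X\}$.

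The main obstacle, and the only point where the minimality hypothesis is genuinely needed, is the passage from boundedness of $V$ on $\mathcal M$ to boundedness on all of $\mathcal H$. The role of the minimal-dual-system assumption is precisely to place the relevant dual vectors inside $\mathcal M$, so that $\langle y, x'_n\rangle_{\mathcal H}=\langle P_{\mathcal M}y, x'_n\rangle_{\mathcal H}$ for every $y$ and $n$; that is, $V$ factors through the orthogonal projection $P_{\mathcal M}$. Granting this, $\sum_n|\langle y, x'_n\rangle_{\mathcal H}|^2=\|V P_{\mathcal M}y\|_{\ell^2}^2\le\frac1m\|P_{\mathcal M}y\|^2\le\frac1m\|y\|^2$ for all $y$, so $\mathfrak X'$ is Bessel and $\textrm{Gram}_{\mathcal H}(\mathfrak X')$ is bounded by Lemma~\ref{Grambase}(1). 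I expect this extension step to be the delicate one, because some such hypothesis really is indispensable: a diagonal-type configuration such as $x_n=e_n$ against $x'_n=e_n+e_0$ in $\ell^2$ is biorthogonal with $\mathfrak X$ orthonormal (hence Riesz-Fischer), yet $\sum_n|\langle e_0, x'_n\rangle|^2=\infty$, so $\mathfrak X'$ fails to be Bessel. The minimality condition is exactly what suppresses these off-span contributions of the dual vectors and makes $V$ extend boundedly.
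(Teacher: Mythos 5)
The paper offers no proof of this lemma at all --- it is quoted from Proposition 9.14 of \cite{Nik} --- so your argument stands or falls on its own. Its mathematical content is sound and is the standard argument. The forward half is complete as written: biorthogonality plus the Bessel bound $M$ for $\mathfrak X'$ gives $\sum_j|a_j|^2\le M\langle \textrm{Gram}_{\mathcal H}(\mathfrak X)a,a\rangle_{\ell^2}$ for finitely supported $a$, which is the required lower bound. The converse half --- the analysis operator $Vy=\{\langle y,x'_n\rangle_{\mathcal H}\}_n$ is bounded on the finite linear span of $\mathfrak X$ by the Riesz--Fischer bound, extends to $\mathcal M=\textrm{Span}\{\mathfrak X\}$, and factors through $P_{\mathcal M}$ --- is also correct, granted two routine points you should spell out: that the continuous extension of $V$ to $\mathcal M$ still computes $y\mapsto\{\langle y,x'_n\rangle_{\mathcal H}\}_n$ (each coordinate functional is continuous, and $\ell^2$-convergence dominates coordinatewise convergence), and that in your counterexample the index must start at $n=1$, with $e_0$ an extra basis vector outside the system, since otherwise biorthogonality fails at $n=0$.

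The one point needing real attention is the direction of the minimality hypothesis. Your projection step requires $x'_n\in\mathcal M$ for every $n$, i.e.\ $\mathfrak X'\subseteq\textrm{Span}\{\mathfrak X\}$, which in the paper's terminology says that $\mathfrak X'$ is the minimal dual system of $\mathfrak X$. The lemma as printed assumes the opposite containment: ``$\mathfrak X$ is the minimal dual system of $\mathfrak X'$'' means, by the paper's own definition, $\mathfrak X\subseteq\textrm{Span}\{\mathfrak X'\}$. These are not equivalent, and your own example shows the printed hypothesis cannot suffice: for $\mathfrak X=\{e_n\}_{n\ge1}$ and $\mathfrak X'=\{e_n+e_0\}_{n\ge1}$ one has $\frac1N\sum_{n=1}^N(e_n+e_0)\to e_0$, hence $\textrm{Span}\{\mathfrak X'\}=\mathcal H\supseteq\mathfrak X$, so $\mathfrak X$ is the (unique) minimal dual system of $\mathfrak X'$; yet $\textrm{Gram}_{\mathcal H}(\mathfrak X)=\mathbf I$ is bounded below while $\textrm{Gram}_{\mathcal H}(\mathfrak X')$ is unbounded. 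So what you have actually proved is the corrected statement, with the containment $\mathfrak X'\subseteq\textrm{Span}\{\mathfrak X\}$ --- which is also the version the paper genuinely uses (in the remark following the lemma, and in Step 2 of the Key Lemma, $\mathfrak X$ is total, so $\mathfrak X'\subseteq\mathcal H=\textrm{Span}\{\mathfrak X\}$ automatically). You should say explicitly that you are proving this version and that the hypothesis as printed, read literally against the paper's definition, is on the wrong side; as it stands your write-up silently proves a different statement from the one quoted, and the quoted one is refuted by your own example.
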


In particular, if $\mathfrak{X}=\{x_n\}_{n=0}^{\infty}$ is topologically free and spans the whole space $\mathcal{H}$, the Gram matrix $\textrm{Gram}_{\mathcal{H}}(\mathfrak{X}')$ is bounded if and only if the Gram matrix $\textrm{Gram}_{\mathcal{H}}(\mathfrak{X})$ is bounded below.

Let $\mathfrak{E}=\{e_i\}_{i=0}^{\infty}$ be an orthonormal base of $\mathcal{H}$. The sequence $\mathfrak{X}=\{x_j\}_{j=0}^{\infty}$ naturally induces an infinite dimensional matrix $A_{\mathfrak{E},\mathfrak{X}}=(\langle x_j, e_i\rangle_{\mathcal{H}})_{i,j}$, whose $j$-th column represents the vector $x_j$ under the orthonormal base $\mathfrak{E}$. Then, we have $\textrm{Gram}_{\mathcal{H}}(\mathfrak{X})=A^*_{\mathfrak{E},\mathfrak{X}}A_{\mathfrak{E},\mathfrak{X}}$. Moreover, one can see that $A^*_{\mathfrak{E},\mathfrak{X}'}A_{\mathfrak{E},\mathfrak{X}}=\mathbf{I}$ if and only if $\mathfrak{X}'$ is the dual system of $\mathfrak{X}$. Notice that $A_{\mathfrak{E},\mathfrak{X}}$ is just the matrix representation of the operator, which maps $e_j$ to $x_j$ for all $j$, under the orthonormal base $\mathfrak{E}$. For convenience, we also denote the operator by $A_{\mathfrak{E},\mathfrak{X}}$. Obviously, $A_{\mathfrak{E},\mathfrak{X}}$ is a bounded invertible linear operator if and only if $\mathfrak{X}$ is a Riesz base.

In a weighted Hardy space $H^2_{\beta}$, it is obvious that $\mathfrak{E}_{\beta}\triangleq\{z^i/\beta_i\}_{i=0}^{\infty}$ is an orthonormal base of $H^2_{\beta}$. Now we focus on a finite Blaschke product $B(z)=\prod\limits_{j=1}^{m}\frac{z_j-z}{1-\overline{z_j}z}$ which has $m$ distinct zero points $z_1,\cdots,z_m$ and $z_1=0$. Let
\[
\mathfrak{B}=\left\{\frac{B^n(z)}{1-\overline{z_{j}}z}; \ j=1,2,\ldots,m \ \text{and} \ n=0,1,2,\ldots\right\}.
\]
and
\[
\mathfrak{B}_{\beta}=\left\{\frac{1}{1-\overline{z_{j}}z}\frac{B^n(z)}{\beta_n}; \ j=1,2,\ldots,m \ \text{and} \ n=0,1,2,\ldots\right\}.
\]
Denote
\[
D_{\beta}=\begin{bmatrix}
\beta_0 & 0 & 0  & \cdots & 0 & \cdots \\
0   & \beta_1 & 0  & \cdots & 0 & \cdots \\
0   & 0   & \beta_2  & \cdots  & 0 & \cdots \\
\vdots   & \vdots & \vdots & \ddots &\vdots  &\vdots \\
0   & 0 & 0 & \cdots &\beta_k & \cdots \\
\vdots   & \vdots & \vdots&\vdots  &\vdots & \ddots
\end{bmatrix}
\]
and
\[
D_{\beta^{-1}}\otimes\mathbf{I}_m=\begin{bmatrix}
\frac{1}{\beta_0}\mathbf{I}_m & 0 & 0  & \cdots & 0 & \cdots \\
0   & \frac{1}{\beta_1}\mathbf{I}_m & 0  & \cdots & 0 & \cdots \\
0   & 0   & \frac{1}{\beta_2}\mathbf{I}_m  & \cdots  & 0 & \cdots \\
\vdots   & \vdots & \vdots & \ddots &\vdots  &\vdots \\
0   & 0 & 0 & \cdots &\frac{1}{\beta_k}\mathbf{I}_m & \cdots \\
\vdots   & \vdots & \vdots&\vdots  &\vdots & \ddots
\end{bmatrix},
\]
where $\mathbf{I}_m$ is the $m\times m$ identity matrix.
Then
\[
A_{\mathfrak{E}_{\beta},\mathfrak{B}_{\beta}}=D_{\beta}X_{\mathfrak{B}}(D_{\beta^{-1}}\otimes\mathbf{I}_m),
\]
where the $(mn+j)$-th column of $X_{\mathfrak{B}}$ is the Taylor coefficients of $\frac{B^n(z)}{1-\overline{z_{j}}z}$.

\begin{lemma}\label{dualB}
Let $H^2_{\beta}$ be the weighted Hardy space induced by a weight sequence $w=\{w_k\}_{k=1}^{\infty}$ with $w_k\rightarrow 1$. The sequence $\mathfrak{B}_{\beta}$ is total and topologically free in $H^2_{\beta}$. In fact, $\mathfrak{B}_{\beta}$ has a unique dual system.
\end{lemma}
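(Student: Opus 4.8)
The plan is to reduce the three assertions to two about $\mathfrak{B}$ itself. Since each vector of $\mathfrak{B}_\beta$ is the nonzero scalar multiple $1/\beta_n$ of the corresponding vector of $\mathfrak{B}$, the closed linear span and any biorthogonal structure are unchanged, so it suffices to prove that $\mathfrak{B}$ is total and topologically free in $H^2_\beta$. Once this is done, the existence of a dual system is automatic from topological freeness, and its uniqueness follows from totality via the facts recalled just before the statement. I expect totality to be the genuine difficulty; topological freeness and the dual system will come from a clean idempotent decomposition.

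For topological freeness I would exploit the operator $A:=B^{*}(S_\beta)$ together with the identity $A\,B(M_z)=\mathbf I$ from Lemma~\ref{leftin}; write $M_B:=B(M_z)$ and $k_{\overline{z_j}}:=\tfrac{1}{1-\overline{z_j}z}$. By Lemma~\ref{Pre-Push-forwards} (applied with $h=B^{*}$ at $\omega=0$) the space $K:=\textrm{Ker}(A)$ equals $\textrm{span}\{k_{\overline{z_1}},\dots,k_{\overline{z_m}}\}$, and Lemma~\ref{mmatrix} shows these $m$ vectors form a basis of $K$. From $A M_B=\mathbf I$ one checks that $Q:=\mathbf I-M_B A$ is a bounded idempotent onto $K$ along $\textrm{Ran}(M_B)$ and that $A^{n}M_B^{n}=\mathbf I$. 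Hence the bounded functionals $\ell_{n,j}(f):=\big[\text{the coefficient of }k_{\overline{z_j}}\text{ in }QA^{n}f\in K\big]$ are well defined, and using $A k_{\overline{z_j}}=0$ together with $A^{n}M_B^{n'}=M_B^{\,n'-n}$ for $n<n'$ and $A^{\,n-n'}$ for $n>n'$ one computes
\[
\ell_{n,j}\!\left(\tfrac{B^{n'}}{1-\overline{z_{j'}}z}\right)=\delta_{nn'}\,\delta_{jj'}.
\]
This biorthogonal family exhibits $\mathfrak{B}$ as topologically free and, through Riesz representation, produces its dual system.

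For totality I would prove the stronger inclusion $\textrm{Hol}(\overline{\mathbb D})\subseteq\overline{\textrm{span}}_{H^2_\beta}(\mathfrak{B})$; since polynomials lie in $\textrm{Hol}(\overline{\mathbb D})$, which is dense in $H^2_\beta$, this gives totality. Fix $f\in\textrm{Hol}(\overline{\mathbb D})$, analytic on some disk $\{|z|<\rho\}$ with $\rho>1$. The classical Wold decomposition of the pure isometry $M_B$ on the unweighted space $H^2$, whose wandering subspace is exactly $K$, yields a unique expansion $f=\sum_{n\ge0}B^{n}f_n$ with $f_n\in K$. Estimating the Wold coefficients by shifting the Cauchy contour to a circle $|z|=s$ with $1<s<\min(\rho,1/\max_j|z_j|)$, on which $c_s:=\min_{|z|=s}|B(z)|>1$ (this $c_s>1$ because $|\varphi_{z_j}(z)|>1$ for $|z|>1$), gives geometric decay $\|f_n\|_{H^2}\le C\,c_s^{-n}$. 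The crucial point is the passage to $H^2_\beta$: the two norms are comparable on the finite-dimensional $K$, while $\sigma(M_B)=B(\overline{\mathbb D})=\overline{\mathbb D}$ forces the spectral radius of $M_B$ on $H^2_\beta$ to be $1$, so $\|M_B^{n}\|^{1/n}\to1$. Then $\sum_n\|B^{n}f_n\|_{H^2_\beta}\le\sum_n\|M_B^{n}\|\,\|f_n\|_{H^2_\beta}<\infty$, the series converges in $H^2_\beta$, and comparing Taylor coefficients with its $H^2$-limit identifies the sum as $f$; hence $f\in\overline{\textrm{span}}_{H^2_\beta}(\mathfrak{B})$.

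The main obstacle is precisely this transfer in the totality argument: the $k_{\overline{z_j}}$ are eigenvectors of the backward shift rather than reproducing kernels of $H^2_\beta$, so there is no point-evaluation to exploit and the $\beta$-weights interact poorly with the products $B^{n}k_{\overline{z_j}}$. A purely soft argument does not suffice: although $\bigcap_N M_B^{N}H^2_\beta=\{0\}$ and $H^2_\beta=\bigl(\bigoplus_{n<N}M_B^{n}K\bigr)\oplus M_B^{N}H^2_\beta$ for each $N$, the associated idempotents need not be uniformly bounded, so one cannot conclude convergence without a quantitative input. That input is the interplay between the geometric decay of the Wold coefficients and the subexponential growth $\|M_B^{n}\|=e^{o(n)}$; it is here that the standing hypothesis $w_k\to1$ enters, through $\rho(M_B)=1$, and where the bulk of the work lies.
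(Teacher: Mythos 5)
Your proposal is correct, and on both halves of the lemma it takes a genuinely different route from the paper. For topological freeness, the paper computes the classical $H^2$ Gram matrix of $\mathfrak{B}$ by residues, finding $\langle \frac{B^n}{1-\overline{z_i}z},\frac{B^k}{1-\overline{z_j}z}\rangle_{H^2}=0$ for $n\neq k$ and $=\frac{1}{1-\overline{z_i}z_j}$ for $n=k$, so that $\textrm{Gram}_{H^2}(\mathfrak{B})$ is block diagonal with every block equal to the invertible matrix $A$ of Lemma \ref{mmatrix}; multiplying by $\textrm{diag}(A^{-1},A^{-1},\ldots)$ then produces an explicit dual system $\mathfrak{Y}$. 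You avoid inner products entirely and instead manufacture biorthogonal functionals from the left-inverse identity $B^{*}(S_\beta)B(M_z)=\mathbf{I}$ of Lemma \ref{leftin}, via the idempotent $Q=\mathbf{I}-M_BA$ onto $\textrm{Ker}(A)$; this is clean, it needs Lemma \ref{mmatrix} only to know that the vectors $\frac{1}{1-\overline{z_j}z}$ form a basis of $\textrm{Ker}(A)$, and by the uniqueness you establish it yields the same dual system --- but it does not yield the explicit formula $A_{\mathfrak{E}_\beta,\mathfrak{Y}}=D_{\beta^{-1}}X_{\mathfrak{B}}(D_\beta\otimes\mathbf{I}_m)D_{A^{-1}}$, which the paper reuses in Step 2 of the Key Lemma, so the paper's residue computation is not wasted work downstream. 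For totality the roles are reversed: the paper's proof is the short, soft one --- the sections $\gamma_j(\omega)=\sum_k B^k(z)\frac{1}{1-\overline{z_j}z}\,\omega^k$ of Proposition \ref{PFHHVB} lie in the closed span of $\mathfrak{B}_\beta$, and their closed span is all of $H^2_\beta$ because the fibres are spanned by eigenvectors of $S_\beta$ and $S_\beta\in\mathbf{B}_1(\mathbb{D})$ --- whereas your Wold-decomposition argument, with the contour-shift bound $\|f_n\|_{H^2}\le C c_s^{-n}$ on the Wold coefficients and the transfer to $H^2_\beta$ through $\rho(M_B)=1$, is correct, quantitative and self-contained, but considerably longer. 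In particular, your closing assertion that ``a purely soft argument does not suffice'' is contradicted by the paper itself: the soft input it uses is Cowen--Douglas condition (c) for $S_\beta$ (density of the span of the eigenvectors $\frac{1}{1-\lambda z}$, $\lambda\in\mathbb{D}$), not uniform boundedness of the Wold idempotents, which indeed is unavailable. Both routes consume the same external ingredients (Lemma \ref{mmatrix}, Lemma \ref{leftin}, and the subexponential growth of $\|M_B^n\|$, which the paper also needs for the convergence of the series defining $\gamma_j(\omega)$), so the difference is one of architecture rather than of inputs; your bookkeeping with $A=B^{*}(S_\beta)$ is, if anything, more careful than the paper's silent identification of $B$ with $B^{*}$.
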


\begin{proof}
For $j=1,2,\ldots,m$ and $n=0,1,2,\ldots$, denote
\[
f_{mn+j}(z)=\frac{1}{1-\overline{z_{j}}z}\frac{B^n(z)}{\beta_n}
\]
and consequently, let
\[
g_{mn+j}(z)=\beta_j\sum\limits_{n=0}^{\infty}\frac{\widehat{f_{mn+j}}(n)}{\beta^2_n}.
\]
and $\mathfrak{G}=\{g_{mn+j}\}_{n=0}^{\infty}$. Notice that
\begin{align*}
\langle \frac{B^n(z)}{1-\overline{z_i}z}, \frac{B^k(z)}{1-\overline{z_j}z}\rangle_{H^2}= \ &\frac{1}{2\pi\mathbf{i}}\int_{\partial\mathbb{D}} \overline{\left(\frac{B^k(z)}{1-\overline{z_j}z}\right)}\cdot \frac{B^n(z)}{1-\overline{z_i}z}\cdot\frac{dz}{z} \\
= \ &\frac{1}{2\pi\mathbf{i}}\cdot\int_{\partial\mathbb{D}}\frac{1}{1-\frac{z_j}{z}}\cdot \frac{B^{n-k}(z)}{1-\overline{z_i}z}\cdot\frac{dz}{z} \\
= \ &\frac{1}{2\pi\mathbf{i}}\cdot\int_{\partial\mathbb{D}}\frac{B^{n-k}(z)}{(z-z_j)(1-\overline{z_i}z)} {dz} \\
= \ &\left\{\begin{array}{cc}
\frac{1}{1-\overline{z_i}z_j}, \ \ \ &\text{if} \ n=k \\
0, \ \ \ &\text{if} \ n\neq k
\end{array}\right..
\end{align*}

Then, under the orthonormal base $\mathfrak{E}_{\beta}\triangleq\{z^i/\beta_i\}_{i=0}^{\infty}$,
\begin{align*}
&\langle \mathfrak{B}_{\beta}, \mathfrak{G} \rangle_{H^2_{\beta}} \\
=&A^*_{\mathfrak{E}_{\beta},\mathfrak{G}}A_{\mathfrak{E}_{\beta},\mathfrak{B}_{\beta}} \\
=&(D_{\beta}(D_{\beta^{-2}}X_{\mathfrak{B}})(D_{\beta}\otimes\mathbf{I}_m))^*D_{\beta}X_{\mathfrak{B}}(D_{\beta^{-1}}\otimes\mathbf{I}_m) \\
=&(D_{\beta}\otimes\mathbf{I}_m)X^*_{\mathfrak{B}}X_{\mathfrak{B}}(D_{\beta^{-1}}\otimes\mathbf{I}_m) \\
=&\begin{bmatrix}
\beta_0\mathbf{I}_m & 0 & 0  & \cdots  \\
0   & \beta_1\mathbf{I}_m & 0  & \cdots  \\
0   & 0   & \beta_2\mathbf{I}_m  & \cdots   \\
\vdots   & \vdots  &\vdots & \ddots
\end{bmatrix}\begin{bmatrix}
A   & 0 & 0 & \cdots \\
0   & A & 0 & \cdots \\
0   & 0   & A  & \cdots \\
\vdots   & \vdots  &\vdots & \ddots
\end{bmatrix}\begin{bmatrix}
\frac{1}{\beta_0}\mathbf{I}_m & 0 & 0  & \cdots  \\
0   & \frac{1}{\beta_1}\mathbf{I}_m & 0  & \cdots  \\
0   & 0   & \frac{1}{\beta_2}\mathbf{I}_m  & \cdots   \\
\vdots   & \vdots  &\vdots & \ddots
\end{bmatrix} \\
=&\begin{bmatrix}
A   & 0 & 0 & \cdots \\
0   & A & 0 & \cdots \\
0   & 0   & A  & \cdots \\
\vdots   & \vdots  &\vdots & \ddots
\end{bmatrix},
\end{align*}
where
\[
A=\begin{bmatrix}
\frac{1}{1-|z_1|^2} & \frac{1}{1-\overline{z_1}z_2} & \frac{1}{1-\overline{z_1}z_3}  & \cdots & \frac{1}{1-\overline{z_1}z_m} \\
\frac{1}{1-\overline{z_2}z_1}   & \frac{1}{1-|z_2|^2} & \frac{1}{1-\overline{z_2}z_3}  & \cdots & \frac{1}{1-\overline{z_2}z_m}  \\
\frac{1}{1-\overline{z_3}z_1}  & \frac{1}{1-\overline{z_3}z_2}   & \frac{1}{1-|z_3|^2}  & \cdots  & \frac{1}{1-\overline{z_3}z_m} \\
\vdots   & \vdots & \vdots & \ddots &\vdots \\
\frac{1}{1-\overline{z_m}z_1}   & \frac{1}{1-\overline{z_m}z_2} & \frac{1}{1-\overline{z_m}z_3} & \cdots &\frac{1}{1-|z_m|^2} \\
\end{bmatrix}.
\]
By Lemma \ref{mmatrix}, $A$ is an invertible matrix, and then we may put
\[
Y=(c_{ik})=A_{\mathfrak{E}_{\beta},\mathfrak{G}}\begin{bmatrix}
A^{-1}   & 0 & 0 & \cdots \\
0   & A^{-1} & 0 & \cdots \\
0   & 0   & A^{-1}  & \cdots \\
\vdots   & \vdots  &\vdots & \ddots
\end{bmatrix}
\]
and let $\mathfrak{Y}=\{y_k(z)\}_{k=1}^{\infty}$,
\[
y_k(z)=\sum\limits_{i=0}^{\infty}c_{ik}\frac{z^i}{\beta_i}.
\]
Then
\[
A^*_{\mathfrak{E}_{\beta},\mathfrak{Y}}A_{\mathfrak{E}_{\beta},\mathfrak{B}_{\beta}}=(A_{\mathfrak{E}_{\beta},\mathfrak{G}}A^{-1})^*A_{\mathfrak{E}_{\beta},\mathfrak{B}_{\beta}}
=(A^{-1})^{*}\langle \mathfrak{B}_{\beta}, \mathfrak{G} \rangle_{H^2_{\beta}}=\mathbf{I},
\]
and consequently $\mathfrak{Y}$ is a dual system of $\mathfrak{B}_{\beta}$ which implies the sequence $\mathfrak{B}_{\beta}$ is topologically free.

Let $\Gamma(\omega)=\{\gamma_j(\omega)\}_{j=1}^m$, where
\[
\gamma_j(\omega)=\sum\limits_{k=0}^{\infty}\left(\frac{B^k(z)}{1-\overline{z_{j}}z} \right)\omega^k.
\]
By Proposition \ref{PFHHVB}, $E_{B(S_{\beta})}(\mathbb{D})$ is a push-forwards Hermitian holomorphic vector bundle induced by $B$ from the vector bundle $E_{S_{\beta}}(\mathbb{D})$ and $\Gamma(\omega)$ is a holomorphic cross-section frame of $E_{B(S_{\beta})}(\mathbb{D})$. Since the closed linear span of $\Gamma(\omega)$ is the whole space $H^2_{\beta}$, so is the closed linear span of $\mathfrak{B}_{\beta}$. Thus, the sequence $\mathfrak{B}_{\beta}$ ($\mathfrak{B}$) is total. Furthermore, the sequence $\mathfrak{Y}$ is the unique dual system of $\mathfrak{B}_{\beta}$ .
\end{proof}

\begin{lemma}\label{CPB}
Let $H^2_{\beta}$ be the weighted Hardy space induced by a weight sequence $w=\{w_k\}_{k=1}^{\infty}$ with $w_k\rightarrow 1$.
Denote $\widetilde{\beta}=\{\widetilde{\beta}_n\}_{n=0}^{\infty}$, where $\widetilde{\beta}_n=(n+1)\beta_n$. Suppose that $B(z)=\prod\limits_{j=1}^{m}\frac{z_j-z}{1-\overline{z_j}z}$ is a finite Blaschke product with $m$ distinct zero points $z_1,\cdots,z_m$ and $z_1=0$. Then,  $\textrm{Gram}_{H^2_{\beta}}(\{\frac{B^{n}}{\beta_n}\}_{n=0}^{\infty})$ is bounded if and only if $\textrm{Gram}_{H^2_{\widetilde{\beta}}}(\{\frac{B^{n}}{\widetilde{\beta}_n}\}_{n=0}^{\infty})$ is bounded.
\end{lemma}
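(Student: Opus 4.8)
The plan is to translate boundedness of each Gram matrix into a Bessel--sequence statement and then transport one weight to the other through the coefficient--shift isometry. By Lemma \ref{Grambase}$(1)$, $\textrm{Gram}_{H^2_{\beta}}(\{B^n/\beta_n\})$ is bounded if and only if $\{B^n/\beta_n\}$ is a Bessel sequence in $H^2_{\beta}$, and similarly for $\widetilde{\beta}$. I would then introduce the map $\Lambda$ acting on Taylor coefficients by $\widehat{\Lambda f}(j)=(j+1)\widehat{f}(j)$. Comparing the two explicit norms and using $\widetilde{\beta}_j=(j+1)\beta_j$ shows that $\Lambda$ is a surjective isometry from $H^2_{\widetilde{\beta}}$ onto $H^2_{\beta}$, and on functions $\Lambda(B^n)=(zB^n)'$. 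Since an isometry preserves Bessel bounds, $\{B^n/\widetilde{\beta}_n\}$ is Bessel in $H^2_{\widetilde{\beta}}$ if and only if $\{(zB^n)'/((n+1)\beta_n)\}$ is Bessel in $H^2_{\beta}$.

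For the implication from $\beta$ to $\widetilde{\beta}$ I would use $(zB^n)'=B^n+nzB^{n-1}B'$ together with $\beta_{n-1}/\beta_n=1/w_n$ to write
\[
\frac{(zB^n)'}{(n+1)\beta_n}=\frac{1}{n+1}\cdot\frac{B^n}{\beta_n}+\frac{n}{(n+1)w_n}\,M_{zB'}\Big(\frac{B^{n-1}}{\beta_{n-1}}\Big).
\]
Because $zB'\in\textrm{Hol}(\overline{\mathbb{D}})\subseteq H^{\infty}_{\beta}$, the operator $M_{zB'}$ is bounded on $H^2_{\beta}$; the scalars $1/(n+1)$ and $n/((n+1)w_n)$ are bounded, and here $w_k\to1$ is exactly what is used. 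Since scaling a Bessel sequence by bounded scalars or applying a bounded operator preserves the Bessel property, and a sum of two Bessel sequences is Bessel, the Bessel property of $\{B^n/\beta_n\}$ passes to $\{(zB^n)'/((n+1)\beta_n)\}$, hence to $\{B^n/\widetilde{\beta}_n\}$. This gives one direction for any weight with $w_k\to1$.

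The converse is the \emph{main obstacle}: inverting $\Lambda$ replaces the harmless multiplier $1/(n+1)$ by the unbounded factor $(n+1)$, so the symmetric manipulation breaks down. To beat this factor I would exploit concentration of the coefficients $\widehat{B^n}(j)$. As $B$ is a finite Blaschke product it continues analytically to $\{|z|<R\}$ with $R=1/\max_k|z_k|>1$, so Cauchy's estimate on a circle $|z|=r$, fixed $1<r<R$, gives $|\widehat{B^n}(j)|\le M_r^{\,n}r^{-j}$ with $M_r=\max_{|z|=r}|B|>1$; this forces geometric decay in $n$ once $j\ge Cn$ for $C>\log M_r/\log r$. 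Writing the two Gram quadratic forms as $\sum_j\beta_j^2|\Phi_j|^2$ and $\sum_j(j+1)^2\beta_j^2|\Phi_j|^2$ with $\Phi_j=\sum_n \overline{a_n}\,\widehat{B^n}(j)/\beta_n$ (which handles the phases automatically), I would split at $j=Cn$. On the bulk $n\le j\le Cn$ one has $(j+1)\asymp(n+1)$ (comparable up to the fixed constant $C$), so the two forms are comparable there; on the tail $j>Cn$ the geometric decay together with the subexponential growth of $\beta_j$ forced by $w_k\to1$ (so $\beta_j r^{-j}\to0$) makes the contribution a bounded, even summable, perturbation. Combining the bulk comparison with the negligible tails recovers boundedness of $G^{\beta}$ from that of $G^{\widetilde{\beta}}$. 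The step I expect to demand the most care is making the bulk/tail splitting uniform in $n$ and controlling the interaction of different frequency blocks, which I would organize as a dyadic decomposition in $n$ combined with a Schur-type estimate.
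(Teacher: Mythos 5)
Your reduction to Bessel sequences and your proof of the implication from $\beta$ to $\widetilde{\beta}$ are correct: the isometry $\Lambda f=(zf)'$ from $H^2_{\widetilde{\beta}}$ onto $H^2_{\beta}$, the product-rule decomposition with the bounded multiplier $M_{zB'}$ and bounded scalars, and the stability of Bessel sequences under these operations are all sound. But the lemma is an equivalence, and your converse direction is a program, not a proof; worse, the one step you assert as if established is precisely the step that fails as stated. You claim that on the bulk $n\le j\le Cn$, where $(j+1)\asymp(n+1)$, ``the two forms are comparable there.'' The ratio $(j+1)/(n+1)$, however, sits \emph{inside} the modulus of the sum $\sum_n a_n\widehat{B^n}(j)/\beta_n$ and varies with $n$, so pointwise comparability of the weights does not transfer to the quadratic forms: contributions from different $n$ can cancel in one form and not in the other. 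Repairing this is exactly the content of the dyadic-in-$n$ decomposition and almost-orthogonality/Schur estimates you defer to your last sentence (together with the fact $\widehat{B^n}(j)=0$ for $j<n$, which uses $z_1=0$, to guarantee $(j+1)\gtrsim(n+1)$ on each block's support); none of it is carried out. So as written you have established only half of the stated equivalence (incidentally the half the paper actually invokes later in the Key Lemma, but the statement to be proved is an ``if and only if'').

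You are one algebraic substitution away from the paper's proof, which gets both directions at once. Instead of transporting $zB^n$, transport $B^{n+1}$: with the plain derivative one has the exact identity $(B^{n+1})'=(n+1)\,B'B^{n}$, so
\[
\frac{(B^{n+1})'}{\widetilde{\beta}_n}=M_{B'}\Bigl(\frac{B^{n}}{\beta_n}\Bigr),
\]
with no cross term and no leftover factor of $n$ in either direction. The differentiation map is bounded above and below from $\{f\in H^2_{\widetilde{\beta}}:\,f(0)=0\}$ to $H^2_{\beta}$, since the coefficient weights compare as $(k+1)\beta_k=\bigl(\tfrac{k+1}{k}w_k\bigr)\,k\beta_{k-1}$ with $\tfrac{k+1}{k}w_k\to1$; the paper's diagonal operators $D$ and $D_w$ are exactly the bounded invertible corrections that make this an isometry. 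All the vectors $B^{n+1}$ vanish at $0$ because $z_1=0$, and $M_B$ and $M_{B'}$ are bounded and bounded below ($B$ and $B'$ are analytic on $\overline{\mathbb{D}}$ and zero-free on $\mathbb{T}$; for $B'$ this is the nontrivial fact, due to Cassier and Chalendar, that a finite Blaschke product has no critical points on the unit circle). Hence each step in the chain --- $\{B^n/\widetilde{\beta}_n\}$ Bessel in $H^2_{\widetilde{\beta}}$, iff $\{M_B(B^n/\widetilde{\beta}_n)\}$ Bessel in $H^2_{\widetilde{\beta}}$, iff $\{(B^{n+1})'/\widetilde{\beta}_n\}$ Bessel in $H^2_{\beta}$, iff $\{M_{B'}(B^n/\beta_n)\}$ Bessel in $H^2_{\beta}$, iff $\{B^n/\beta_n\}$ Bessel in $H^2_{\beta}$ --- is reversible, because every operator involved is bounded \emph{and} bounded below. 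Note that bounded-belowness of $M_{B'}$, which your forward direction never needed, is exactly what replaces the unbounded factor $(n+1)$ that blocks your converse.
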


\begin{proof}
Since $B(z)$ is analytic on $\overline{\mathbb{D}}$ and has no zero point on $\partial\mathbb{D}$, the multiplication operator $M_{B}$ is bounded and lower bounded on $H^2_{\beta}$.  Notice that $B'(z)$ is also analytic on $\overline{\mathbb{D}}$ and has no zero point on $\partial\mathbb{D}$ (Theorem 2.1 in \cite{Car}, see also \cite{Fri}). Then the multiplication operator $M_{B'}$ is also bounded and lower bounded on $H^2_{\beta}$.

Define $D_w, D:H^2_{\beta}\rightarrow H^2_{\beta}$ by, for any $f(z)\in H^2_{\beta}$,
\[
D_wf(z)=\sum\limits_{k=0}^{\infty}w_{k+1}\widehat{f}(k)z^k  \ \ \text{and} \ \
Df(z)=\sum\limits_{k=0}^{\infty}\frac{k+2}{k+1}\cdot \widehat{f}(k)z^k .
\]
We could write the two operators $D_w$ and $D$ in matrix form under the orthonormal base $\{z^k/\beta_k\}_{k=0}^{\infty}$ as follows,
\[
D_w=\begin{bmatrix}
w_1 & 0 & 0  & \cdots & 0 & \cdots \\
0   & w_2 & 0  & \cdots & 0 & \cdots \\
0   & 0   & w_3  & \cdots  & 0 & \cdots \\
\vdots   & \vdots & \vdots & \ddots &\vdots  &\vdots \\
0   & 0 & 0 & \cdots &w_k & \cdots \\
\vdots   & \vdots & \vdots&\vdots  &\vdots & \ddots
\end{bmatrix},    \
D=\begin{bmatrix}
2 & 0 & 0  & \cdots & 0 & \cdots \\
0   & \frac{3}{2} & 0  & \cdots & 0 & \cdots \\
0   & 0   & \frac{4}{3}  & \cdots  & 0 & \cdots \\
\vdots   & \vdots & \vdots & \ddots &\vdots  &\vdots \\
0   & 0 & 0 & \cdots &\frac{k+1}{k} & \cdots \\
\vdots   & \vdots & \vdots&\vdots  &\vdots & \ddots
\end{bmatrix}.
\]
One can see that
\begin{align*}
&\langle  \frac{{B}^{i+1}}{\widetilde{\beta_i}}, \frac{{B}^{j+1}}{\widetilde{\beta_{j}}}\rangle_{H^2_{\widetilde{\beta}}} \\
=& \ \frac{1}{\widetilde{\beta_i}\widetilde{\beta_j}}\cdot \sum\limits_{k=0}^{\infty} \overline{\widehat{{B}^{j+1}}(k)}\cdot \widehat{{B}^{i+1}}(k)\cdot \widetilde{\beta_k}^2 \\
=& \ \overline{\widehat{{B}^{j+1}}(0)} \widehat{{B}^{i+1}}(0)+\frac{1}{(i+1)(j+1){\beta}_i{\beta}_j}
\sum\limits_{k=1}^{\infty}(k+1) \overline{\widehat{{B}^{j+1}}(k)}(k+1)\widehat{{B}^{i+1}}(k){\beta}_k^2 \\
=& \ \overline{z_0}^{j+1}z_0^{i+1}+\frac{1}{(i+1)(j+1){\beta}_i{\beta}_j}
\sum\limits_{k=1}^{\infty}(\frac{k+1}{k}\cdot w_k)k\overline{\widehat{{B}^{j+1}}(k)} (\frac{k+1}{k}\cdot w_k)k\widehat{{B}^{i+1}}(k) {\beta}_{k-1}^2 \\
=& \ \frac{1}{(i+1)(j+1){\beta}_i{\beta}_j} \langle  DD_w(B^{i+1})', DD_w(B^{j+1})'\rangle_{H^2_{\beta}} \\
=& \ \frac{1}{{\beta}_i{\beta}_j} \langle  DD_wM_{B'}B^{i}, DD_wM_{B'}B^{j} \rangle_{H^2_{\beta}} \\
=& \ \langle  DD_wM_{B'}(\frac{B^{i}}{\beta_i}), DD_wM_{B'}(\frac{B^{j}}{\beta_j}) \rangle_{H^2_{\beta}}.
\end{align*}
Then
\begin{align*}
\textrm{Gram}_{H^2_{\beta}}\left(\left\{M_{B}(\frac{B^{n}}{\widetilde{\beta}_n})\right\}_{n=0}^{\infty}\right)=&~\textrm{Gram}_{H^2_{\widetilde{\beta}}}
\left(\left\{\frac{B^{n+1}}{\widetilde{\beta}_n}\right\}_{n=0}^{\infty}\right) \\
=&~\textrm{Gram}_{H^2_{\beta}}\left(\left\{DD_wM_{B'}(\frac{B^{n}}{\beta_n})\right\}_{n=0}^{\infty}\right).
\end{align*}
Since all of the operators $D$, $D_w$, $M_{B'}$ and $M_{B}$ are bounded and lower bounded, the Gram matrix $\textrm{Gram}_{H^2_{\widetilde{\beta}}}(\{\frac{B^{n}}{\widetilde{\beta}_n}\}_{n=0}^{\infty})$ is bounded if and only if $\textrm{Gram}_{H^2_{\beta}}(\{\frac{B^{n}}{\beta_n}\}_{n=0}^{\infty})$ is bounded.
\end{proof}

\begin{lemma}[Theorem 7 in \cite{C90}, in essence]\label{Tricontroll}
Let $B(z)=\prod\limits_{j=1}^{m}\frac{z_j-z}{1-\overline{z_j}z}$ be a finite Blaschke product with $m$ distinct zero points $z_1,\cdots,z_m$ and $z_1=0$. Suppose that $H^2_{\alpha}$ and $H^2_{\beta}$ are two weighted Hardy space satisfying
\[
\frac{\alpha_{k+1}}{\alpha_k}\geq\frac{\beta_{k+1}}{\beta_k}, \ \ \ \text{for all} \ k=0,1,2,\ldots.
\]
If the Gram matrix $\textrm{Gram}_{H^2_{\alpha}}(\{\frac{B^{n}}{\alpha_n}\}_{n=0}^{\infty})$ is bounded, then $\textrm{Gram}_{H^2_{\beta}}(\{\frac{B^{n}}{\beta_n}\}_{n=0}^{\infty})$ is also bounded.
\end{lemma}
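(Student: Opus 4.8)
The plan is to restate the boundedness of each Gram matrix as the boundedness of an explicitly \emph{triangular} operator, and then to transport boundedness from $\alpha$ to $\beta$ by recognizing the transition as Hadamard (Schur) multiplication by a monotone ``$\min/\max$'' kernel. The crucial structural input is the normalization $z_1=0$: since $B(0)=0$, the function $B^n$ vanishes to order $n$ at the origin, so $\widehat{B^n}(k)=0$ for $k<n$.

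First I would introduce, for each weight, the synthesis operator. Let $A_\beta$ be the operator on $\ell^2$ determined by $e_n\mapsto B^n/\beta_n$; in the orthonormal basis $\{z^k/\beta_k\}_k$ its matrix has $(k,n)$-entry $\frac{\beta_k}{\beta_n}\widehat{B^n}(k)$, which is supported on $k\ge n$. As recorded before Lemma \ref{dualB}, one has $\textrm{Gram}_{H^2_\beta}(\{B^n/\beta_n\})=A_\beta^{*}A_\beta$, so the Gram matrix is bounded if and only if $A_\beta$ is a bounded operator; the same holds for $\alpha$. Hence the lemma reduces to the implication: $A_\alpha$ bounded $\Rightarrow$ $A_\beta$ bounded. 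Next I would set $\gamma_n:=\alpha_n/\beta_n$. The hypothesis $\alpha_{k+1}/\alpha_k\ge\beta_{k+1}/\beta_k$ together with $\alpha_0=\beta_0=1$ gives $\gamma_n/\gamma_{n-1}\ge 1$, so $\gamma=\{\gamma_n\}$ is non-decreasing with $\gamma_n\ge 1$. A direct comparison of entries yields
\[
(A_\beta)_{k,n}=\frac{\gamma_n}{\gamma_k}\,(A_\alpha)_{k,n},
\]
that is, $A_\beta=K\odot A_\alpha$ is the Hadamard product with the kernel $K_{k,n}=\gamma_n/\gamma_k$ (equivalently $A_\beta=D_\gamma A_\alpha D_\gamma^{-1}$ with $D_\gamma=\mathrm{diag}(\gamma_n)$). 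On the support $k\ge n$, monotonicity of $\gamma$ gives $K_{k,n}=\gamma_n/\gamma_k=\min(\gamma_n,\gamma_k)/\max(\gamma_n,\gamma_k)\in(0,1]$, so off the support we are free to use the full symmetric $\min/\max$ kernel without changing $K\odot A_\alpha$.

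The main obstacle is that $D_\gamma$ is unbounded, so the conjugation $D_\gamma A_\alpha D_\gamma^{-1}$ is not obviously bounded; the whole content is to show that the symmetric kernel $K_{k,n}=\min(\gamma_n,\gamma_k)/\max(\gamma_n,\gamma_k)$ is a \emph{bounded Schur multiplier}. This I would resolve with a square-root trick. For $a>0$ put $x_a=a^{-1/2}\mathbf{1}_{(0,a)}\in L^2(0,\infty)$; then $\|x_a\|=1$ and
\[
\langle x_{\gamma_n},x_{\gamma_k}\rangle=\frac{\min(\gamma_n,\gamma_k)}{\sqrt{\gamma_n\gamma_k}}=\sqrt{\frac{\min(\gamma_n,\gamma_k)}{\max(\gamma_n,\gamma_k)}}=\sqrt{K_{k,n}}.
\]
Since $\sqrt{K}$ is thus a Gram-type kernel of unit vectors, it is a Schur multiplier of norm at most $1$ (the standard criterion: a kernel $\langle u_i,v_j\rangle$ has Schur-multiplier norm $\le\sup_i\|u_i\|\sup_j\|v_j\|$). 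Because Schur-multiplier norm is submultiplicative under Hadamard product, $K=\sqrt{K}\odot\sqrt{K}$ has Schur-multiplier norm at most $1$ as well.

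Finally I would assemble the pieces: assuming $\textrm{Gram}_{H^2_\alpha}(\{B^n/\alpha_n\})$ is bounded, $A_\alpha$ is bounded, and
\[
\|A_\beta\|=\|K\odot A_\alpha\|\le \|K\|_{\mathrm{Schur}}\,\|A_\alpha\|\le \|A_\alpha\|<\infty,
\]
whence $\textrm{Gram}_{H^2_\beta}(\{B^n/\beta_n\})=A_\beta^{*}A_\beta$ is bounded. I expect the only genuinely delicate point to be the Schur-multiplier boundedness of $K$; everything else (the triangular factorization, the entrywise identity, and the reduction to $A_\alpha$) is bookkeeping that relies exactly on $B(0)=0$ and on the monotonicity of $\gamma$. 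It is worth noting that monotonicity is used twice and essentially: to guarantee $\gamma_n\le\gamma_k$ on the support $k\ge n$ (so that $K$ is precisely the $\min/\max$ kernel) and to keep the Hadamard factor bounded by $1$.
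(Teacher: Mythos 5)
Your proof is correct, and it is essentially the argument behind the result being invoked: the paper gives no proof of this lemma at all, deferring to Cowen's Theorem 7, whose entire content is exactly this Hadamard-multiplication mechanism. Your reduction to the lower-triangular synthesis matrix (using $B(0)=0$, so $\widehat{B^n}(k)=0$ for $k<n$), the identification $A_\beta = K\odot A_\alpha$ with $\gamma_n=\alpha_n/\beta_n$ non-decreasing, and the contractive Schur-multiplier bound for the symmetric $\min/\max$ kernel via the square-root trick constitute a faithful, self-contained reconstruction of that cited approach.
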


\noindent \textbf{Key Lemma.} \
Suppose that $H^2_{\beta}$ is a weighted Hardy space of polynomial growth and $B(z)=\prod\limits_{j=1}^{m}\frac{z_j-z}{1-\overline{z_j}z}$ is a finite Blaschke product with $m$ distinct zero points $z_1,\cdots,z_m$ and $z_1=0$. Let
\[
\mathfrak{B}=\left\{\frac{B^n(z)}{1-\overline{z_{j}}z}; \ j=1,2,\ldots,m \ \text{and} \ n=0,1,2,\ldots\right\}
\]
and
\[
\mathfrak{B}_{\beta}=\left\{\frac{1}{1-\overline{z_{j}}z}\frac{B^n(z)}{\beta_n}; \ j=1,2,\ldots,m \ \text{and} \ n=0,1,2,\ldots\right\}.
\]
Then $\mathfrak{B}_{\beta}$ is a Riesz base of $H^2_{\beta}$. Furthermore, $E_{B(S_{\beta})}$ has a cross-section frame $\Gamma(\omega)=\{\gamma_j(\omega)\}_{j=1}^m$ with Riesz base $\beta$-normalized coefficients, where
\[
\gamma_j(\omega)=\sum\limits_{k=0}^{\infty}\left(\frac{B^k(z)}{1-\overline{z_{j}}z} \right)\omega^k.
\]
\begin{proof}
By Lemma \ref{dualB}, the sequence $\mathfrak{B}$ is total and topologically free in $H^2_{\beta}$. Following from Lemma \ref{Grambase}, it suffices to prove the Gram matrix $\textrm{Gram}_{H^2_{\beta}}(\mathfrak{B}_{\beta})$ is both bounded and lower bounded.

\noindent \textbf{Step 1. \ Boundedness of $\textrm{Gram}_{H^2_{\beta}}(\mathfrak{B}_{\beta})$.}

Assume $\sup_k\{(k+1)|w_k-1|\}\leq M\in\mathbb{N}$. Let $H^2_{\widetilde{\beta}}$ be the weighted Hardy space induced by the weight sequence $\widetilde{w}=\{\widetilde{w_k}\}_{k=1}^{\infty}$, where $\widetilde{w_k}=\frac{k+M+1}{k+1}$. Let $H^2_{\widetilde{\beta'}}$ be the weighted Hardy space with $\widetilde{\beta'_k}=(k+1)^M$. Since
\[
\lim\limits_{k\rightarrow\infty}\frac{\widetilde{\beta_k}}{\widetilde{\beta'_k}}=\lim\limits_{k\rightarrow\infty}\frac{\prod_{j=0}^{k}\frac{j+M+1}{j+1}}{(k+1)^M}
=\lim\limits_{k\rightarrow\infty}\frac{\prod_{m=1}^{M}\frac{k+m+1}{m}}{(k+1)^M}=\prod\limits_{m=1}^{M}\frac{1}{m},
\]
$H^2_{\widetilde{\beta}}$ and $H^2_{\widetilde{\beta'}}$ are the same space with two equivalent norms.

As well known, the Gram matrix $\textrm{Gram}_{H^2}(\{{B^{n}}\}_{n=0}^{\infty})$ is bounded for the classical Hardy space $H^2$. Applying Lemma \ref{CPB} $M$ times, we obtain that $\textrm{Gram}_{H^2_{\widetilde{\beta'}}}(\{\frac{B^{n}}{\widetilde{\beta'}_n}\}_{n=0}^{\infty})$ is bounded, and consequently $\textrm{Gram}_{H^2_{\widetilde{\beta}}}(\{\frac{B^{n}}{\widetilde{\beta}_n}\}_{n=0}^{\infty})$ is bounded. Then, it follows from Lemma \ref{Tricontroll} that $\textrm{Gram}_{H^2_{\beta}}(\{\frac{B^{n}}{\beta_n}\}_{n=0}^{\infty})$ is bounded. Let $\mathfrak{X}=\{\frac{B^{n}}{\beta_n}\}_{n=0}^{\infty}$. Then,
\begin{align*}
&\textrm{Gram}_{H^2_{\beta}}(\mathfrak{B}_{\beta}) \\
=& \langle \mathfrak{B}_{\beta}, \mathfrak{B}_{\beta} \rangle_{H^2_{\beta}} \\
=& \begin{bmatrix}
\langle \frac{\mathfrak{X}}{1-\overline{z_{1}}z}, \frac{\mathfrak{X}}{1-\overline{z_{1}}z} \rangle_{H^2_{\beta}} & \langle \frac{\mathfrak{X}}{1-\overline{z_{1}}z}, \frac{\mathfrak{X}}{1-\overline{z_{2}}z} \rangle_{H^2_{\beta}}   & \cdots & \langle \frac{\mathfrak{X}}{1-\overline{z_{1}}z}, \frac{\mathfrak{X}}{1-\overline{z_{m}}z} \rangle_{H^2_{\beta}} \\
\langle \frac{\mathfrak{X}}{1-\overline{z_{2}}z}, \frac{\mathfrak{X}}{1-\overline{z_{1}}z} \rangle_{H^2_{\beta}}   & \langle \frac{\mathfrak{X}}{1-\overline{z_{2}}z}, \frac{\mathfrak{X}}{1-\overline{z_{2}}z} \rangle_{H^2_{\beta}}   & \cdots & \langle \frac{\mathfrak{X}}{1-\overline{z_{2}}z}, \frac{\mathfrak{X}}{1-\overline{z_{m}}z} \rangle_{H^2_{\beta}}  \\
\vdots   & \vdots & \ddots &\vdots \\
\langle \frac{\mathfrak{X}}{1-\overline{z_{m}}z}, \frac{\mathfrak{X}}{1-\overline{z_{1}}z} \rangle_{H^2_{\beta}}   & \langle \frac{\mathfrak{X}}{1-\overline{z_{m}}z}, \frac{\mathfrak{X}}{1-\overline{z_{2}}z} \rangle_{H^2_{\beta}}  & \cdots & \langle \frac{\mathfrak{X}}{1-\overline{z_{m}}z}, \frac{\mathfrak{X}}{1-\overline{z_{m}}z} \rangle_{H^2_{\beta}} \\
\end{bmatrix}.
\end{align*}
Notice that each multiplication operator $M_{\frac{1}{1-\overline{z_{j}}z}}$, $j=1,2,\ldots,m$, is bounded on $H^2_{\beta}$. Thus, the Gram matrix $\textrm{Gram}_{H^2_{\beta}}(\mathfrak{B}_{\beta})$ is bounded.
In addition, we could obtain that the Gram matrix $\textrm{Gram}_{H^2_{\beta^{-1}}}(\mathfrak{B}_{\beta^{-1}})$ is also bounded by a similar method, where
\[
\mathfrak{B}_{\beta^{-1}}=\left\{\frac{\beta_n B^n(z)}{1-\overline{z_{j}}z}; \ j=1,2,\ldots,m \ \text{and} \ n=0,1,2,\ldots\right\}.
\]

\noindent \textbf{Step 2. \ Lower boundedness of $\textrm{Gram}_{H^2_{\beta}}(\mathfrak{B}_{\beta})$.}

By Lemma \ref{dualB}, $\mathfrak{B}_{\beta}$ has a unique dual system $\mathfrak{Y}$. Then, by Lemma \ref{dual}, it suffices to prove the boundedness of $\textrm{Gram}_{H^2_{\beta}}(\mathfrak{Y})$. Review that (in the proof of Lemma \ref{dualB})
\begin{align*}
A_{\mathfrak{E}_{\beta},\mathfrak{Y}}=&D_{\beta}(D_{\beta^{-2}}X_{\mathfrak{B}})(D_{\beta}\otimes\mathbf{I}_m)D_{A^{-1}} \\
=&D_{\beta^{-1}}X_{\mathfrak{B}}(D_{\beta}\otimes\mathbf{I}_m)D_{A^{-1}},
\end{align*}
where
\[
A=\begin{bmatrix}
\frac{1}{1-|z_1|^2} & \frac{1}{1-\overline{z_1}z_2}   & \cdots & \frac{1}{1-\overline{z_1}z_m} \\
\frac{1}{1-\overline{z_2}z_1}   & \frac{1}{1-|z_2|^2}  & \cdots & \frac{1}{1-\overline{z_2}z_m}  \\
\vdots   & \vdots  & \ddots &\vdots \\
\frac{1}{1-\overline{z_m}z_1}   & \frac{1}{1-\overline{z_m}z_2}  & \cdots &\frac{1}{1-|z_m|^2} \\
\end{bmatrix} \ \ \text{and} \ \ D_{A^{-1}}=\begin{bmatrix}
A^{-1}   & 0 & 0 & \cdots \\
0   & A^{-1} & 0 & \cdots \\
0   & 0   & A^{-1}  & \cdots \\
\vdots   & \vdots  &\vdots & \ddots
\end{bmatrix}.
\]
Notice that
\begin{align*}
\textrm{Gram}_{H^2_{\beta}}(\mathfrak{Y})=&A^*_{\mathfrak{E}_{\beta},\mathfrak{Y}}A_{\mathfrak{E}_{\beta},\mathfrak{Y}} \\
=&(D_{\beta^{-1}}X_{\mathfrak{B}}(D_{\beta}\otimes\mathbf{I}_m)D_{A^{-1}})^*D_{\beta^{-1}}X_{\mathfrak{B}}(D_{\beta}\otimes\mathbf{I}_m)D_{A^{-1}} \\
=&D^*_{A^{-1}}\textrm{Gram}_{H^2_{\beta^{-1}}}(\mathfrak{B}_{\beta^{-1}})D_{A^{-1}}.
\end{align*}
Therefore, by the boundedness of $\textrm{Gram}_{H^2_{\beta^{-1}}}(\mathfrak{B}_{\beta^{-1}})$ and $D_{A^{-1}}$, we have that the Gram matrix $\textrm{Gram}_{H^2_{\beta}}(\mathfrak{Y})$ is bounded.

Thus, $\mathfrak{B}_{\beta}$ is a Riesz base of $H^2_{\beta}$. Furthermore, $E_{B(S_{\beta})}$ has a cross-section frame $\Gamma(\omega)=\{\gamma_j(\omega)\}_{j=1}^m$ with Riesz base $\beta$-normalized coefficients, where
\[
\gamma_j(\omega)=\sum\limits_{k=0}^{\infty}\left(\frac{B^n(z)}{1-\overline{z_{j}}z} \right)\omega^k.
\]
\end{proof}

\begin{corollary}\label{autosimilar}
Let $H^2_{\beta}$ be a weighted Hardy space of polynomial growth. For any $\varphi\in\textrm{Aut}(\mathbb{D})$, the Hermitian holomorphic vector bundles $E_{S_{\beta}}$ and  $E_{\varphi(S_{\beta})}$ are similar.
\end{corollary}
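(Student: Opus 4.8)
The plan is to produce a single bounded invertible operator $X$ on $H^2_{\beta}$ with $XE_{S_{\beta}}(\omega)=E_{\varphi(S_{\beta})}(\omega)$ for all $\omega$, which is exactly a similarity deformation. Since all of the analytic difficulty has been absorbed into the Key Lemma, the corollary should be its first and easiest consequence; the only point requiring care is that the Key Lemma is stated under the normalization $z_1=0$, whereas a general $\varphi\in\textrm{Aut}(\mathbb{D})$ has its single zero at an arbitrary point of $\mathbb{D}$.

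First I would note that $\varphi$ is a finite Blaschke product of order $m=1$, that $\varphi(\mathbb{D})\setminus\varphi(\mathbb{T})=\mathbb{D}$, and that $\varphi(S_{\beta})\in\mathbf{B}_1(\mathbb{D})$, so both bundles sit over the common base $\mathbb{D}$. Applying the Key Lemma to $B=\varphi$ then gives that $\mathfrak{B}_{\beta}=\{\tfrac{\varphi^{n}(z)}{\beta_{n}(1-\overline{z_1}z)}\}_{n\ge0}$ is a Riesz base of $H^2_{\beta}$ and that $E_{\varphi(S_{\beta})}$ carries the holomorphic cross-section frame $\gamma^{\varphi}(\omega)=\sum_{k\ge0}\tfrac{\varphi^{k}(z)}{1-\overline{z_1}z}\,\omega^{k}$, whose $\beta$-normalized coefficients are precisely $\mathfrak{B}_{\beta}$. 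To legitimise this for a zero $z_1\ne0$ I would check that the normalization $z_1=0$ enters the Key Lemma's argument only through Lemma~\ref{CPB}, and there only via the constant term $\widehat{B^{i+1}}(0)\,\overline{\widehat{B^{j+1}}(0)}=(\varphi(0))^{i+1}\overline{(\varphi(0))^{j+1}}$; since $|\varphi(0)|<1$ this contributes a bounded rank-one operator to the Gram matrix and hence does not affect its boundedness, so the conclusion persists verbatim.

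Next I would record the trivial frame for $E_{S_{\beta}}$: its fibre at $\omega$ is spanned by $\tfrac{1}{1-\omega z}=\sum_{k\ge0}z^{k}\omega^{k}$, so $\gamma(\omega)=\sum_{k\ge0}z^{k}\omega^{k}$ is a cross-section frame whose $\beta$-normalized coefficients form the orthonormal base $\mathfrak{E}_{\beta}=\{z^{k}/\beta_{k}\}$, automatically a Riesz base. I then set $X=A_{\mathfrak{E}_{\beta},\mathfrak{B}_{\beta}}$, the operator determined by $X(z^{k}/\beta_{k})=\tfrac{\varphi^{k}(z)}{\beta_{k}(1-\overline{z_1}z)}$. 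Because $\mathfrak{E}_{\beta}$ is an orthonormal base and $\mathfrak{B}_{\beta}$ is a Riesz base, $X$ is bounded and invertible.

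Finally I would verify that $X$ carries one frame to the other. From $X(z^{k})=\beta_{k}X(z^{k}/\beta_{k})=\tfrac{\varphi^{k}(z)}{1-\overline{z_1}z}$ and the continuity of $X$ I get $X\gamma(\omega)=\sum_{k\ge0}X(z^{k})\,\omega^{k}=\gamma^{\varphi}(\omega)$ for every $\omega\in\mathbb{D}$, the frame series converging on $\mathbb{D}$ because polynomial growth forces $\beta_{k}^{1/k}\to1$. Therefore $XE_{S_{\beta}}(\omega)=\textrm{span}\{X\gamma(\omega)\}=\textrm{span}\{\gamma^{\varphi}(\omega)\}=E_{\varphi(S_{\beta})}(\omega)$ for all $\omega\in\mathbb{D}$, so $X$ is a similarity deformation and $E_{S_{\beta}}\sim E_{\varphi(S_{\beta})}$. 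The expected main obstacle is thus not any hard estimate but the clean identification of the two frames through their coefficient sequences, together with the routine check that dropping the $z_1=0$ normalization costs only a rank-one perturbation.
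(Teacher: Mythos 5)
Your overall mechanism is the same as the paper's: realize both bundles by cross-section frames whose $\beta$-normalized coefficients are Riesz bases, and take the coefficient transformation as the similarity deformation. The two places where you deviate from the paper are, however, exactly where the gaps lie. First, your extension of the Key Lemma to a Blaschke factor whose zero is not at the origin is under-justified: the normalization $z_1=0$ does \emph{not} enter the Key Lemma's proof only through Lemma~\ref{CPB}. It also sits in the hypotheses of Lemma~\ref{Tricontroll}, which is only cited from \cite{C90}, and in the base case of Step 1 (boundedness of $\textrm{Gram}_{H^2}(\{B^n\})$, which for $B(0)=0$ is just orthonormality). Your rank-one observation for Lemma~\ref{CPB} is correct (the discarded term is $vv^{*}$ with $v_i=B(0)^{i+1}/\widetilde{\beta}_i\in\ell^2$, and adding a bounded matrix changes boundedness of the Gram matrix in neither direction), and the $H^2$ base case also survives since $|B(0)|<1$ gives a Toeplitz-type Gram matrix with geometrically decaying diagonals; but Lemma~\ref{Tricontroll} as stated carries the hypothesis $z_1=0$, so ``persists verbatim'' is an assertion, not a proof, until you check Cowen's argument. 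The paper sidesteps the whole issue: it applies the Key Lemma, hypotheses intact, to the order-two product $B(z)=z\varphi^{*}(z)$ (zeros $0$ and $\overline{z_0}$), and obtains invertibility of the relevant operator because the weighted composition $f\mapsto\frac{f\circ\varphi^{*}}{1-z_0z}$ permutes the Riesz base $\mathfrak{B}_{\beta}$, using $B\circ\varphi^{*}=B$.

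Second, and more seriously, your closing identification of the frame is wrong by a conjugation. With $\varphi(z)=\frac{z_1-z}{1-\overline{z_1}z}$ one computes
\[
\gamma^{\varphi}(\omega)=\sum_{k\geq0}\frac{\varphi^{k}(z)}{1-\overline{z_1}z}\,\omega^{k}
=\frac{1}{(1-\overline{z_1}z)\left(1-\omega\varphi(z)\right)}
=\frac{1}{1-z_1\omega}\cdot\frac{1}{1-\varphi^{*}(\omega)z},
\]
whereas $E_{\varphi(S_{\beta})}(\omega)=\textrm{Ker}(\varphi(S_{\beta})-\omega\mathbf{I})=\textrm{span}\{\frac{1}{1-\varphi(\omega)z}\}$, because $\textrm{Ker}(S_{\beta}-\lambda\mathbf{I})=\textrm{span}\{\frac{1}{1-\lambda z}\}$ and $\varphi^{-1}=\varphi$. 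Since $\varphi^{*}\neq\varphi$ unless $z_1$ is real, your section spans $E_{\varphi^{*}(S_{\beta})}(\omega)$, equivalently the fibre of $E_{\varphi(S_{\beta})}$ at $\varphi(\varphi^{*}(\omega))$ rather than at $\omega$; so the final equality $\textrm{span}\{\gamma^{\varphi}(\omega)\}=E_{\varphi(S_{\beta})}(\omega)$ in your proof is false in general, and your $X$ is a similarity deformation onto $E_{\varphi^{*}(S_{\beta})}$, not onto $E_{\varphi(S_{\beta})}$. (You inherit this slip from the loose wording of the Key Lemma, whose displayed frame is really one for $E_{B^{*}(S_{\beta})}$.) The error is repairable in one line~---~run your argument with $\varphi^{*}$ in place of $\varphi$, since $\varphi\mapsto\varphi^{*}$ is an involution of $\textrm{Aut}(\mathbb{D})$~---~and this is precisely why the paper's frame for $E_{\varphi(S_{\beta})}$ is $\sum_{k}\frac{(\varphi^{*})^{k}(z)}{1-z_0z}\omega^{k}$, with the star. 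A complete version of your proof needs both repairs, plus the easy remark that a general $\varphi\in\textrm{Aut}(\mathbb{D})$ carries a unimodular factor ${\textrm e}^{\mathbf{i}\theta}$, absent from the Key Lemma's products, which is handled by a diagonal unitary.
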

\begin{proof}
Without loss of generality, we may assume that $\varphi(z)=\frac{z_0-z}{1-\overline{z_{0}}z}$, where $0\neq z_0\in\mathbb{D}$. Let $B(z)=(z\varphi(z))^*=z\varphi^*(z)$ and
\[
\mathfrak{B}_{\beta}=\left\{\frac{B^n(z)}{\beta_n},~\frac{1}{1-{z_{0}}z}\frac{B^n(z)}{\beta_n}; n=0,1,2,\ldots\right\}.
\]
Then, by the Key Lemma, $\mathfrak{B}_{\beta}$ is a Riesz base of $H^2_{\beta}$. Furthermore, let $X:H^2_{\beta}\rightarrow H^2_{\beta}$ be the unique Riesz base transformation induced by
\[
X\left(\frac{B^n(z)}{\beta_n}\right)=\frac{1}{1-{z_{0}}z}\frac{B^n(z)}{\beta_n} \ \ \text{and} \ \
X\left(\frac{1}{1-{z_{0}}z}\frac{B^n(z)}{\beta_n}\right)=\frac{B^n(z)}{\beta_n}.
\]
Consider the cross-section frame $\gamma(z)=\sum\limits_{k=0}^{\infty}\left(z^k\right)\omega^k$ of $E_{S_{\beta}}$ and the cross-section frame $\gamma_{\varphi}(z)=\sum\limits_{k=0}^{\infty}\left(\frac{(\varphi^*)^k(z)}{1-{z_{0}}z}\right)\omega^k$ of $E_{\varphi(S_{\beta})}$, and the coefficients transformation operator $Y:H^2_{\beta}\rightarrow H^2_{\beta}$  induced by
\[
Y(z^k)=\frac{(\varphi^*)^k(z)}{1-{z_{0}}z} \ \ \ \ \ \ \text{for} \ k=0,1,2,\ldots.
\]
Notice that
\[
Y\left(\frac{B^n(z)}{\beta_n}\right)=\frac{1}{1-{z_{0}}z}\frac{B^n(z)}{\beta_n} \ \ \text{and} \ \
Y\left(\frac{1}{1-{z_{0}}z}\frac{B^n(z)}{\beta_n}\right)=\frac{B^n(z)}{\beta_n}.
\]
Then, by the uniqueness of the Riesz base transformation, $Y$ is just the Riesz base transformation $X$. Therefore, $X$ is the similarity deformation from the Hermitian holomorphic vector bundles $E_{S_{\beta}}$ to $E_{\varphi(S_{\beta})}$.
\end{proof}

\begin{lemma}\label{left}
Let $E_{h_1(S_{\beta})}(\Omega)$ and $E_{h_2(S_{\beta})}(\Omega)$ be two similar Hermitian holomorphic vector bundles over a domain $\Omega$. Then, for any analytic function $h(z)\in\textrm{Hol}(\overline{\Omega})$, the push-forwards vector bundles $E_{(h\circ h_1)(S_{\beta})}$ and  $E_{(h\circ h_2)(S_{\beta})}$ are similar.
\end{lemma}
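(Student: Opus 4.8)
The plan is to reduce the statement to an operator intertwining relation and then propagate that relation through the holomorphic functional calculus. By hypothesis there is a bounded invertible operator $X$ with $XE_{h_1(S_\beta)}(\Omega)=E_{h_2(S_\beta)}(\Omega)$. The computation already carried out in the proof of Lemma \ref{localrig} shows that any such $X$ satisfies the intertwining identity $Xh_1(S_\beta)=h_2(S_\beta)X$. In particular $h_1(S_\beta)$ and $h_2(S_\beta)$ are similar, hence share the same spectrum; since $\sigma(h_i(S_\beta))=h_i(\overline{\mathbb{D}})$ and the compositions $h\circ h_1,h\circ h_2$ are elements of $\textrm{Hol}(\overline{\mathbb{D}})$, the function $h$ is analytic on a neighborhood of the common spectrum $h_1(\overline{\mathbb{D}})=h_2(\overline{\mathbb{D}})$.

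First I would lift the intertwining relation to the composed functions. From $Xh_1(S_\beta)=h_2(S_\beta)X$ one gets $Xh_1(S_\beta)^k=h_2(S_\beta)^kX$ for every $k$ by induction, hence $Xp(h_1(S_\beta))=p(h_2(S_\beta))X$ for every polynomial $p$. To pass from polynomials to the analytic function $h$, the cleanest route is the Riesz--Dunford calculus: choosing a contour $\gamma$ surrounding $h_1(\overline{\mathbb{D}})$ inside the region of analyticity of $h$, and using $X(\zeta\mathbf{I}-h_1(S_\beta))^{-1}X^{-1}=(\zeta\mathbf{I}-h_2(S_\beta))^{-1}$, one obtains
\[
Xh(h_1(S_\beta))X^{-1}=\frac{1}{2\pi\mathbf{i}}\int_{\gamma}h(\zeta)(\zeta\mathbf{I}-h_2(S_\beta))^{-1}\,d\zeta=h(h_2(S_\beta)).
\]
Together with the composition property of the holomorphic functional calculus, namely $h(h_i(S_\beta))=(h\circ h_i)(S_\beta)$, this yields $X(h\circ h_1)(S_\beta)=(h\circ h_2)(S_\beta)X$. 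This step --- justifying the functional-calculus lift, the composition identity, and above all that $h$ really is analytic on a neighborhood of the relevant compact spectrum $h_i(\overline{\mathbb{D}})$ (which is larger than $\overline{\Omega}$ and must be accessed via the well-definedness of $h\circ h_i$) --- is where the argument carries its weight.

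Finally I would transfer the operator identity back to the bundles, exactly as in Lemma \ref{localrig}. Fixing a domain in $(h\circ h_1)(\mathbb{D})\setminus(h\circ h_1)(\mathbb{T})$, for any $\omega$ in it and any $\mathbf{x}\in E_{(h\circ h_1)(S_\beta)}(\omega)=\textrm{Ker}((h\circ h_1)(S_\beta)-\omega\mathbf{I})$ we have
\[
(h\circ h_2)(S_\beta)(X\mathbf{x})=X(h\circ h_1)(S_\beta)(\mathbf{x})=\omega X\mathbf{x},
\]
so $X\mathbf{x}\in E_{(h\circ h_2)(S_\beta)}(\omega)$, giving $XE_{(h\circ h_1)(S_\beta)}(\omega)\subseteq E_{(h\circ h_2)(S_\beta)}(\omega)$. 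Since $X^{-1}$ intertwines in the opposite direction, the reverse inclusion holds as well, so the two fibres coincide under $X$. Hence $X$ is a similarity deformation from $E_{(h\circ h_1)(S_\beta)}$ to $E_{(h\circ h_2)(S_\beta)}$, and by Lemma \ref{localrig} this similarity extends over every domain in the common image, completing the proof.
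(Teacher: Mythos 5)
Your proof is correct, but it follows a genuinely different route from the paper's. The paper never forms an operator identity for the composed functions: it argues fibrewise through the push-forwards structure. For $\omega$ in the unbranched part of $h(\Omega)$, Lemma \ref{Pre-Push-forwards} yields
\[
E_{(h\circ h_i)(S_{\beta})}(\omega)=\textrm{span}\{E_{h_i(S_{\beta})}(z_j);~z_j\in h^{-1}(\omega)\},\qquad i=1,2,
\]
and since $X$ carries $E_{h_1(S_{\beta})}(z_j)$ onto $E_{h_2(S_{\beta})}(z_j)$ for each preimage $z_j\in\Omega$, it carries the first span onto the second; that is essentially the entire proof. You instead promote the bundle similarity to the operator intertwining $Xh_1(S_{\beta})=h_2(S_{\beta})X$ (the computation of Lemma \ref{localrig}), lift it through the Riesz--Dunford calculus to $X(h\circ h_1)(S_{\beta})=(h\circ h_2)(S_{\beta})X$, and recover the fibre correspondence from kernels. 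Each approach buys something. Yours is global: the operator identity is insensitive to which preimages of $\omega$ lie in $\Omega$, whereas the paper's span formula as written collects only the preimages inside $\Omega$; since $h$ must extend analytically past $\overline{\Omega}$ for $h\circ h_i$ to lie in $\textrm{Hol}(\overline{\mathbb{D}})$ at all, preimages of $\omega$ outside $\Omega$ could contribute further summands to the fibre, and closing that point requires first extending the similarity to the whole common image via Lemma \ref{localrig} (harmless in all of the paper's applications, where $h_i(\overline{\mathbb{D}})\subseteq\overline{\Omega}$). The price of your route is the functional-calculus machinery: you need $\sigma(h_i(S_{\beta}))=h_i(\overline{\mathbb{D}})$, the composition theorem $h(h_i(S_{\beta}))=(h\circ h_i)(S_{\beta})$, and, above all, analyticity of $h$ on a neighborhood of $h_i(\overline{\mathbb{D}})$. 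You correctly flag this last point as the crux; deducing it from the well-definedness of $h\circ h_i\in\textrm{Hol}(\overline{\mathbb{D}})$ is legitimate (it is an implicit hypothesis of the statement), though making it fully rigorous needs a local-inversion argument plus Riemann's removable singularity theorem at the branch points of $h_i$ --- a step the paper's geometric proof never has to confront.
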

\begin{proof}
Let $X$ be a similarity deformation from the Hermitian holomorphic vector bundles $E_{h_1(S_{\beta})}(\Omega)$ to $E_{h_2(S_{\beta})}(\Omega)$, i.e., $XE_{h_1(S_{\beta})}(\lambda)=E_{h_2(S_{\beta})}(\lambda)$ for every $\lambda\in\Omega$. Notice that for every $\omega\in h(\Omega)\setminus\left(h(\partial\Omega)\bigcup h(\mathcal{Z}(h'))\right)$ such that $h(z)-\omega$ have $m$ distinct zero points. Then
\[
E_{(h\circ h_1)(S_{\beta})}(\omega)=\textrm{span}\{E_{h_1(S_{\beta})}(z_i);~z_i\in h^{-1}(\omega), \ i=1, \ldots, m.\}
\]
and
\[
E_{(h\circ h_2)(S_{\beta})}(\omega)=\textrm{span}\{E_{h_2(S_{\beta})}(z_i);~z_i\in h^{-1}(\omega), \ i=1, \ldots, m.\}
\]
Hence,
\[
XE_{(h\circ h_1)(S_{\beta})}(\omega)=E_{(h\circ h_2)(S_{\beta})}(\omega),
\]
which implies $X$ is also a similarity deformation from $E_{(h\circ h_1)(S_{\beta})}$ to $E_{(h\circ h_2)(S_{\beta})}$.
\end{proof}

\begin{corollary}\label{Blaschesimilar}
Suppose that $H^2_{\beta}$ is a weighted Hardy space of polynomial growth and $B(z)$ is a finite Blaschke product with order $m$. Then, $E_{B(S_{\beta})}$ is similar to $\bigoplus_{j=1}^{m}E_{S_{\beta}}$.
\end{corollary}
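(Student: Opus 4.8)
The plan is to produce a single bounded invertible operator $V$ realizing the similarity, obtained as a Riesz base transformation that carries a standard frame of $\bigoplus_1^m E_{S_\beta}$ onto the frame of $E_{B(S_\beta)}$ supplied by the Key Lemma. First I would put $B$ into a convenient normal form. Write $B(z)=\prod_{j=1}^m\frac{z_j-z}{1-\overline{z_j}z}$. If $B$ has $m$ distinct zeros but none of them is $0$, choose a zero $a$ and set $\psi(z)=\frac{a-z}{1-\bar a z}\in\textrm{Aut}(\mathbb{D})$, so that $B\circ\psi$ still has $m$ distinct zeros, one of which is $0$. By Corollary \ref{autosimilar}, $E_{\psi(S_\beta)}\sim E_{S_\beta}$, and then Lemma \ref{left}, applied with outer function $B$ and with $h_1=\psi$, $h_2=z$, gives $E_{(B\circ\psi)(S_\beta)}\sim E_{B(S_\beta)}$. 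Hence it suffices to treat a $B$ whose zeros $z_1=0,z_2,\dots,z_m$ are distinct.

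For such a normalized $B$, the Key Lemma tells us that $\mathfrak{B}_\beta=\{\frac{1}{1-\overline{z_j}z}\frac{B^n(z)}{\beta_n}\}$ is a Riesz base of $H^2_\beta$ and that $\Gamma(\omega)=\{\gamma_j(\omega)\}_{j=1}^m$, with $\gamma_j(\omega)=\sum_k\frac{B^k(z)}{1-\overline{z_j}z}\omega^k$, is a cross-section frame of $E_{B(S_\beta)}$ whose $\beta$-normalized coefficients are precisely $\mathfrak{B}_\beta$. On the other side, $\bigoplus_1^m E_{S_\beta}$ carries the obvious frame $\{\tilde\gamma_j(\omega)\}_{j=1}^m$, where $\tilde\gamma_j(\omega)$ is the reproducing section $\sum_k z^k\omega^k$ placed in the $j$-th summand; its $\beta$-normalized coefficients $\{e_j\otimes z^k/\beta_k\}$ (meaning $z^k/\beta_k$ in the $j$-th slot) form the standard orthonormal base of $\bigoplus_1^m H^2_\beta$. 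I would then define $V:\bigoplus_1^m H^2_\beta\to H^2_\beta$ on this orthonormal base by $V(e_j\otimes z^k/\beta_k)=\frac{1}{1-\overline{z_j}z}\frac{B^k(z)}{\beta_k}$. Since $V$ sends an orthonormal base onto the Riesz base $\mathfrak{B}_\beta$, it is bounded and invertible by the equivalent description of a Riesz base (item $(5')$ of the definition).

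It then remains to verify that $V$ intertwines the two bundles fibrewise. Because $V$ is continuous, term-by-term evaluation on the series (which is norm-convergent for $|\omega|<1$, as $\sum_k\beta_k^2|\omega|^{2k}<\infty$) yields $V(\tilde\gamma_j(\omega))=\sum_k\beta_k\,V(e_j\otimes z^k/\beta_k)\,\omega^k=\sum_k\frac{B^k(z)}{1-\overline{z_j}z}\omega^k=\gamma_j(\omega)$ for every $j$. Thus $V$ maps the fibre $\textrm{span}\{\tilde\gamma_j(\omega)\}_{j=1}^m$ of $\bigoplus_1^m E_{S_\beta}$ at $\omega$ onto the fibre $\textrm{span}\{\gamma_j(\omega)\}_{j=1}^m$ of $E_{B(S_\beta)}$; since $V$ is invertible and both fibres have dimension $m$, this restriction is a bijection for each $\omega$. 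Hence $V$ is a similarity deformation over any domain $\Omega\subseteq\mathbb{D}$, and $E_{B(S_\beta)}\sim\bigoplus_1^m E_{S_\beta}$.

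The one place requiring more than the Key Lemma is when $B$ has a repeated zero (for instance $B(z)=z^m$): the reduction above cannot remove multiplicities, since composing with an automorphism preserves the multiplicity pattern, and the functions $\frac{1}{1-\overline{z_j}z}$ then no longer supply $m$ independent fibre directions. I expect this degenerate case to be the main obstacle. The natural remedy is to rerun the same scheme using the confluent kernels $\frac{z^l}{(1-\overline{z_j}z)^{l+1}}$, $l=0,\dots,n_j-1$, attached to a zero $z_j$ of multiplicity $n_j$, which span $\textrm{Ker}(S_\beta-z_j\mathbf{I})^{n_j}$ in accordance with Lemma \ref{Pre-Push-forwards}, and to show that the corresponding $\beta$-normalized system is again a Riesz base by the same Gram-matrix estimates that prove the Key Lemma; alternatively, one deforms $B$ through order-$m$ Blaschke products to one with distinct zeros and invokes the locally constant K-theoretic similarity invariant for these bundles. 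Either route reduces the degenerate case to the construction above.
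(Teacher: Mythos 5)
Your construction in the distinct-zeros case is correct, and it is in fact exactly the paper's own argument: the operator $V$ carrying the standard orthonormal base of $\bigoplus_1^m H^2_\beta$ onto the Riesz base $\mathfrak{B}_\beta$ is precisely the similarity deformation the paper builds from the Key Lemma. The genuine gap is the repeated-zero case, which you correctly identify as the main obstacle but then only sketch. Neither of your proposed remedies is carried out, and neither is routine. The confluent-kernel route would require re-proving the entire Key Lemma for the system $\bigl\{z^l B^n(z)/(1-\overline{z_j}z)^{l+1}\bigr\}$: that proof rests on the exact orthogonality relations $\langle B^n/(1-\overline{z_i}z),\, B^k/(1-\overline{z_j}z)\rangle_{H^2}=\delta_{nk}/(1-\overline{z_i}z_j)$, on the invertibility of the Cauchy-type matrix in Lemma \ref{mmatrix}, and on the Gram-matrix transfer results (Lemmas \ref{CPB} and \ref{Tricontroll}), none of which apply verbatim to derivative-type kernels, so a new dual system and new matrix estimates would have to be produced. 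The second route is vaguer still: no ``locally constant K-theoretic similarity invariant'' along deformations of Blaschke products is established anywhere in the paper, and proving such a statement would be a substantial result in its own right.

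What you are missing is that the degenerate case can be eliminated by a one-line trick using only the tools you already invoke: normalize a \emph{regular value} of $B$ rather than its zeros. Choose $\omega_0\in\mathbb{D}\setminus B(\mathcal{Z}(B'))$, so that $B(z)-\omega_0$ has $m$ distinct zeros $\lambda_1,\dots,\lambda_m$ in $\mathbb{D}$, no matter how degenerate the zero set of $B$ itself is. Put $\varphi(z)=\frac{\omega_0-z}{1-\overline{\omega_0}z}$ and $\psi(z)=\frac{\lambda_1-z}{1-\overline{\lambda_1}z}$. Then $\widetilde{B}=\varphi\circ B\circ\psi$ is again a finite Blaschke product of order $m$, now with $m$ distinct zeros, one of them equal to $0$, so your Riesz-base construction applies to $\widetilde{B}$ and gives $E_{\widetilde{B}(S_\beta)}\sim\bigoplus_{j=1}^m E_{S_\beta}$. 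Since $\varphi\circ\varphi=\mathrm{Id}$, you then conclude
\[
E_{B(S_\beta)}\ \sim\ E_{B\circ\psi(S_\beta)}\ =\ E_{\varphi\circ\widetilde{B}(S_\beta)}\ \sim\ E_{\varphi\left(\bigoplus_{j=1}^m S_\beta\right)}\ =\ \bigoplus_{j=1}^m E_{\varphi(S_\beta)}\ \sim\ \bigoplus_{j=1}^m E_{S_\beta},
\]
where the first and last similarities come from Corollary \ref{autosimilar} and the middle one from Lemma \ref{left} applied with outer function $\varphi$ --- the same two results you already use in your normalization step. This is exactly how the paper's proof handles an arbitrary order-$m$ Blaschke product, and it makes your speculative final paragraph unnecessary.
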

\begin{proof}
Choose $\omega_0\in \mathbb{D}\setminus B(\mathcal{Z}(B'))$. Then, $B(z)-\omega_0$ has $m$ distinct zero points $\lambda_1, \cdots, \lambda_m$. Let
\[
\varphi(z)=\frac{\omega_0-z}{1-\overline{\omega_0}z} \ \ \text{and} \ \ \ \psi(z)=\frac{\lambda_1-z}{1-\overline{\lambda_1}z}.
\]
Let $\widetilde{B}(z)=\varphi\circ B(z)\circ \psi$. Notice that $\widetilde{B}(z)$ has $m$ distinct zero points $z_1,\cdots,z_m$ and $z_1=0$. Then, by the Key Lemma,  $E_{\widetilde{B}(S_{\beta})}$ has a cross-section frame $\Gamma(\omega)=\{\gamma_j(\omega)\}_{j=1}^m$ over $0$ with Riesz base $\beta$-normalized coefficients. Let $\mathfrak{E}=\{e_{j,i}/\beta_i;~j=1,\cdots,m,~i=0,1,\cdots\}$ be the orthonormal base of  $\bigoplus_{j=1}^{m}H^2_{\beta}$, where $e_{j,i}$ is $z^i/\beta_i$ in the $j$-th $H^2_{\beta}$. Furthermore, let $X: \bigoplus_{j=1}^{m}H^2_{\beta}\rightarrow H^2_{\beta}$ be the linear operator defined by mapping $e_{j,i}$ to $\widehat{\gamma_j}(i)/\beta_i$. Then, $X$ is a Riesz base transformation and consequently it is the similarity deformation from $E_{\widetilde{B}(S_{\beta})}$ to $\bigoplus_{j=1}^{m}E_{S_{\beta}}$. Notice that $\varphi\circ\varphi(z)=z$. Then, together with Lemma \ref{left} and Corollary \ref{autosimilar},
\[
E_{B(S_{\beta})}\sim E_{B\circ \psi(S_{\beta})}\sim E_{\varphi\circ\widetilde{B}(S_{\beta})}\sim E_{\varphi(\bigoplus_{j=1}^{m}S_{\beta})}= \bigoplus_{j=1}^{m}E_{\varphi(S_{\beta})}\sim \bigoplus_{j=1}^{m}E_{S_{\beta}}.
\]
This finishes the proof.
\end{proof}

\subsection{Invariant deformations for a Hermitian holomorphic vector bundles}\label{3.2}

Now, let us study the invariant deformations for a Hermitian holomorphic vector bundles.
Firstly, we need the following result, which is a generalization of the results of Thomson \cite{Th77}.

\begin{lemma}[\cite{Th77}, in essence]\label{hFB}
Let $H^2_{\beta}$ be the weighted Hardy space induced by a weight sequence $w=\{w_k\}_{k=1}^{\infty}$ with $w_k\rightarrow 1$.
For any $h\in \textrm{Hol}(\overline{\mathbb{D}})$, there exists a finite Blaschke
product $B$ and a function $F\in \textrm{Hol}(\overline{\mathbb{D}})$ such that
$h=F\circ B$ and $\{M_h \}_{H^2_{\beta}}' = \{M_B \}_{H^2_{\beta}}'$.
\end{lemma}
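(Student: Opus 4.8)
The plan is to establish the two inclusions separately: one is formal, the other carries all the content. Assume $h$ is non-constant (if $h$ is constant, take $B(z)=z$ and $F=h$). Whatever finite Blaschke product $B$ we produce, a factorization $h=F\circ B$ with $F\in\textrm{Hol}(\overline{\mathbb{D}})$ yields $M_h=F(M_B)$ through the Riesz--Dunford calculus, which is legitimate because $F$ is analytic on a neighbourhood of $\sigma(M_B)=\overline{B(\mathbb{D})}=\overline{\mathbb{D}}$; as $F(M_B)$ is then a norm-limit of polynomials in $M_B$, it commutes with every operator in $\{M_B\}_{H^2_{\beta}}'$. Hence the inclusion $\{M_B\}_{H^2_{\beta}}'\subseteq\{M_h\}_{H^2_{\beta}}'$ holds for free, and the whole task is to choose $B$ so that the reverse inclusion $\{M_h\}_{H^2_{\beta}}'\subseteq\{M_B\}_{H^2_{\beta}}'$ holds as well.

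To pin down the correct $B$ I would take it of maximal order. Since $h\in\textrm{Hol}(\overline{\mathbb{D}})$ is non-constant, each fibre $h^{-1}(\omega)\cap\mathbb{D}$ is finite and, over each connected component of the regular values, $h$ is a finite-sheeted covering with fibres $h^{-1}(\omega)=\{z_1(\omega),z_2(\omega),\dots\}$ exactly as in Lemma \ref{Pre-Push-forwards} and Proposition \ref{PFHHVB}. Among all finite Blaschke products $B'$ admitting a factorization $h=F'\circ B'$ with $F'\in\textrm{Hol}(\overline{\mathbb{D}})$, the order is bounded, since a generic point of $\mathbb{D}$ has no more $B'$-preimages than $h$ has $h$-preimages; so a factor $B$ of maximal order exists, and by maximality the residual $F$ admits no further nontrivial Blaschke factorization. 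Replacing $B$ by $\varphi\circ B$ for a M\"{o}bius transformation $\varphi$ affects neither the commutant nor the maximality (as $M_{\varphi\circ B}=\varphi(M_B)$ has the same commutant as $M_B$), so we may normalise $B$; this provides $B$ and the corresponding $F\in\textrm{Hol}(\overline{\mathbb{D}})$ with $h=F\circ B$.

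The reverse inclusion is the heart of the matter, and is where I would follow Thomson's analysis. Given $T\in\{M_h\}_{H^2_{\beta}}'$, I would study the action of $T^*$ on the eigenspaces $\ker(M_h^*-\overline{\omega}\mathbf{I})$; by the kernel description in Lemma \ref{Pre-Push-forwards} these are spanned by the reproducing kernels at the preimages $z_i(\omega)\in h^{-1}(\omega)$, so, locally in $\omega$, the operator $T^*$ is encoded by an analytic matrix relating the branches $z_i(\omega)$. The identity $TM_h=M_hT$ forces this branch-matrix to be compatible with the monodromy of $h$, while the maximality of $B$ means precisely that the residual $F$ contributes no branch-identification beyond those already recorded by the coarser fibres of $B$; hence $T^*$ must preserve the $B$-fibre structure, which is equivalent to $TM_B=M_BT$. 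The two genuine difficulties are (i) promoting this local, fibrewise matrix description to a bona fide bounded-operator identity on all of $H^2_{\beta}$, and (ii) checking that Thomson's monodromy argument, originally written for the classical $H^2$, survives the passage to a weighted space; I expect (ii) to be the principal obstacle.

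For (ii) the key point is that every geometric ingredient of the argument depends on the weight sequence only through the standing hypothesis $w_k\to1$. This assumption gives $\sigma(M_z)=\overline{\mathbb{D}}$, places $h(S_\beta)$ in $\mathbf{B}_n(\Omega)$ (the Remark following Proposition \ref{PFHHVB}), and makes each $\ker(M_h^*-\overline{\omega}\mathbf{I})$ the span of reproducing kernels at $h^{-1}(\omega)$ \emph{independently of} $\beta$. These are exactly the inputs Thomson's branch-and-monodromy analysis uses, so that part of the proof is identical to the classical case; only the boundedness of the operators produced and the eventual description of $\{M_B\}_{H^2_{\beta}}'$ involve the weights, and both go through because $M_z$ remains a weighted shift with $w_k\to1$. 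Combining the formal inclusion of the first paragraph with this reverse inclusion yields $\{M_h\}_{H^2_{\beta}}'=\{M_B\}_{H^2_{\beta}}'$, completing the proof.
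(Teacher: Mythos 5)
Your proposal takes essentially the same route as the paper: the paper's entire proof of this lemma is the remark that Thomson's argument in \cite{Th77} (pp.~524--528) carries over verbatim once the classical reproducing kernel is replaced by the weighted kernel $k(z,\omega)=\sum_{k\geq 0}\overline{\omega}^{k}z^{k}/\beta_{k}^{2}$, and your plan likewise reduces everything to Thomson's reproducing-kernel/monodromy analysis and then argues that its only standing hypothesis is $w_{k}\rightarrow 1$. Your first paragraph (the inclusion $\{M_{B}\}'_{H^2_{\beta}}\subseteq\{M_{h}\}'_{H^2_{\beta}}$ via the Riesz--Dunford calculus) and your existence argument for a Blaschke factor of maximal order are both correct.

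The genuine loose end is where you replace Thomson's construction of $B$ by ``a Blaschke factor of maximal order.'' Thomson does not define $B$ this way: he builds $B$ so that the branches of $B^{-1}\circ B$ are exactly those local inverses of $h^{-1}\circ h$ admitting unrestricted analytic continuation in $\mathbb{D}$ off a finite set, and his commutant analysis is keyed to that construction. Your bridging claim --- ``the maximality of $B$ means precisely that the residual $F$ contributes no branch-identification beyond those already recorded by the coarser fibres of $B$'' --- is asserted, not proved, and it is not a formal consequence of maximality. What maximality actually yields (and what you do prove) is that $F$ admits no nontrivial Blaschke factorization; the implication ``no Blaschke factor $\Rightarrow$ no admissible branch identifications'' is itself the hard local-to-global content of Thomson's theorem, so as written your outline assumes the crux. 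The gap can be closed in two ways: (i) adopt Thomson's own $B$ and defer to his analysis, exactly as the paper does; or (ii) keep your maximal-order $B$ and add a reconciliation step --- Thomson's theorem (adapted to $H^2_{\beta}$) produces some $B_{1}$ with $h=F_{1}\circ B_{1}$ and $\{M_{h}\}'=\{M_{B_{1}}\}'$; every branch of $B^{-1}\circ B$ is an admissible branch of $h^{-1}\circ h$, so the generic fibres of $B$ lie inside those of $B_{1}$, forcing $B_{1}=\psi\circ B$ with $\psi$ a finite Blaschke product; maximality of the order of $B$ then makes $\psi$ a M\"{o}bius transformation, whence $\{M_{B}\}'=\{M_{B_{1}}\}'=\{M_{h}\}'$. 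Finally, a small point: your disposal of the constant case is wrong, since for constant $h$ one has $\{M_{h}\}'=\mathcal{L}(H^2_{\beta})$, which is never the commutant of a (nonconstant) finite Blaschke product; the lemma, like Thomson's theorem, is implicitly about nonconstant $h$.
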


The above conclusion on the Hardy space was obtained in \cite{Th77}. For more general
results on the Hardy space, see \cite{Th76} and \cite{C78}. The method in \cite{Th77} also works on the weighted
Bergman spaces by replacing the reproducing kernel on the Hardy space by one on the weighted
Bergman spaces $A_{\alpha}^2$ \cite{JZ}. Notice that
\[
k(z,\omega)=\sum\limits_{k=0}^{\infty}\frac{\overline{\omega}^kz^k}{\beta^2_k}
\]
is the reproducing kernel of the weighted Hardy space $H^2_{\beta}$. Similarly, we can give a proof of Lemma \ref{hFB} as the same as the proof
in \cite{Th77} (pp. 524-528), except replacing the reproducing kernel on the classical Hardy space by one on the weighted Hardy space $H^2_{\beta}$.

\begin{lemma}\label{Commutator-S}
Let $H^2_{\beta}$ be the weighted Hardy space induced by a weight sequence $w=\{w_k\}_{k=1}^{\infty}$ with $w_k\rightarrow 1$. Then $\mathcal{L}_{inv}(E_{S_\beta}(\mathbb{D}))\cong H^{\infty}_{\beta^{-1}}$. More precisely, 
\[
\mathcal{L}_{inv}(E_{S_\beta}(\mathbb{D}))=\{g(S_{\beta});~\ g\in H^{\infty}_{\beta^{-1}}\}.
\]
\end{lemma}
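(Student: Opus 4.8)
The plan is to identify the invariant deformation algebra with the commutant of $S_\beta$, and then to compute that commutant by transposing to the dual space $H^2_{\beta^{-1}}$, where it becomes the familiar multiplier algebra.

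First I would show $\mathcal{L}_{inv}(E_{S_\beta}(\mathbb{D}))=\{S_\beta\}'$. For each $\lambda\in\mathbb{D}$ the fibre is the one-dimensional space $E_{S_\beta}(\lambda)=\mathbb{C}\cdot k_\lambda$ with $k_\lambda(z)=(1-\lambda z)^{-1}=\sum_k\lambda^k z^k$, and since $S_\beta\in\mathbf{B}_1(\mathbb{D})$ these vectors span $H^2_\beta$. If $T$ is invariant then $Tk_\lambda=c(\lambda)k_\lambda$ for each $\lambda$; comparing $TS_\beta k_\lambda$ and $S_\beta Tk_\lambda$, both equal to $\lambda c(\lambda)k_\lambda$, and using that $\{k_\lambda\}$ is total gives $TS_\beta=S_\beta T$. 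Conversely, if $TS_\beta=S_\beta T$ then $(S_\beta-\lambda\mathbf{I})Tv=T(S_\beta-\lambda\mathbf{I})v=0$ for $v\in\mathrm{Ker}(S_\beta-\lambda\mathbf{I})$, so $T$ preserves each fibre and is invariant. Hence the two algebras coincide.

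Second, I would exploit the duality between $H^2_\beta$ and $H^2_{\beta^{-1}}$. The unweighted bilinear pairing $[f,g]=\sum_k\widehat{f}(k)\widehat{g}(k)$ is a perfect pairing by the Cauchy--Schwarz inequality applied to the weights $\beta_k$ and $\beta_k^{-1}$, identifying the dual of $H^2_\beta$ with $H^2_{\beta^{-1}}$. A direct coefficient computation gives $[S_\beta f,g]=[f,M_z g]$, so the transpose of $S_\beta$ under this pairing is exactly the multiplication operator $M_z$ on $H^2_{\beta^{-1}}$. Transposition is a bounded-operator anti-isomorphism carrying $\{S_\beta\}'$ onto $\{M_z\}'$ on $H^2_{\beta^{-1}}$, and the commutant of $M_z$ on a weighted Hardy space is its multiplier algebra $\{M_g;\ g\in H^\infty_{\beta^{-1}}\}$ (put $g=A(1)$ and note $A(z^n)=M_z^n A(1)=z^n g$, so $A=M_g$ on polynomials). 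Transposing $M_g=\sum_k\widehat{g}(k)M_z^k$ back termwise, and using $M_z^{\,t}=S_\beta$, shows every $T\in\{S_\beta\}'$ equals $g(S_\beta)$ with $g\in H^\infty_{\beta^{-1}}$. The reverse inclusion is immediate, since $S_\beta k_\lambda=\lambda k_\lambda$ forces $g(S_\beta)k_\lambda=g(\lambda)k_\lambda$, so each $g(S_\beta)$ is invariant.

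To make the final identification transparent I would record it at the level of matrices. Under the orthonormal bases $\{z^k/\beta_k\}$ of $H^2_\beta$ and $\{\beta_k z^k\}$ of $H^2_{\beta^{-1}}$, the matrix of $g(S_\beta)$ has $(l,j)$-entry $\widehat{g}(j-l)\beta_l/\beta_j$ for $l\le j$, while the matrix of $M_g$ on $H^2_{\beta^{-1}}$ has $(l,j)$-entry $\widehat{g}(l-j)\beta_j/\beta_l$ for $l\ge j$; these are transposes of one another, so $g(S_\beta)$ is bounded on $H^2_\beta$ if and only if $M_g$ is bounded on $H^2_{\beta^{-1}}$, i.e. if and only if $g\in H^\infty_{\beta^{-1}}$. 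The main obstacle is precisely this boundedness equivalence: converting the purely algebraic fact ``$T=g(S_\beta)$ on polynomials'' into the statement $g\in H^\infty_{\beta^{-1}}$, and guaranteeing that the formal series $\sum_k\widehat{g}(k)S_\beta^k$ really defines the bounded operator $T$. The transpose/duality bookkeeping resolves this, matching the operator norm of $g(S_\beta)$ on $H^2_\beta$ with that of $M_g$ on $H^2_{\beta^{-1}}$, so no separate convergence estimate is required.
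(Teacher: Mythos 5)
Your proposal is correct and follows essentially the same route as the paper: identify $\mathcal{L}_{inv}(E_{S_\beta}(\mathbb{D}))$ with the commutant $\{S_\beta\}'_{H^2_\beta}$, pass to $H^2_{\beta^{-1}}$ where $S_\beta$ corresponds to the multiplication operator $M_z$, and invoke the standard fact that the commutant of $M_z$ on a weighted Hardy space is its multiplier algebra $H^\infty_{\beta^{-1}}$. The only difference is cosmetic: the paper conjugates by the isometry $T_{\beta,\beta^{-1}}$ and works with Hilbert-space adjoints (using $T_{\beta,\beta^{-1}}S_\beta T_{\beta^{-1},\beta}=M_z^*$), whereas you use the bilinear pairing $[f,g]=\sum_k\widehat{f}(k)\widehat{g}(k)$ and transposition; the two bookkeepings differ only by an entrywise coefficient conjugation, which does not affect the conclusion, and your matrix-level verification makes explicit the boundedness equivalence that the paper leaves implicit.
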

\begin{proof}
For any $W\in\mathcal{L}_{inv}(E_{S_\beta})$, we have $WS_{\beta}=S_{\beta}W$ since $S_{\beta}\in\mathbf{B}_1(\mathbb{D})$. Then, $\mathcal{L}_{inv}(E_{S_\beta})=\{S_{\beta}\}'_{H^2_{\beta}}$. Recall that the operator $T_{\beta, \beta^{-1}}$ is the isometry from $H^2_{\beta}$ to $H^2_{\beta^{-1}}$ and its inverse operator $T_{\beta^{-1}, \beta}$ is the isometry from $H^2_{\beta^{-1}}$ to $H^2_{\beta}$, which are defined previously. Consider the operator
\[
T_{\beta, \beta^{-1}}S_{\beta}T_{\beta^{-1}, \beta}: H^2_{\beta^{-1}}\rightarrow H^2_{\beta^{-1}}.
\]
Notice that
\[
T_{\beta, \beta^{-1}}S_{\beta}T_{\beta^{-1}, \beta}=M_z^*.
\]
Then
\[
\{T_{\beta, \beta^{-1}}S_{\beta}T_{\beta^{-1}, \beta}\}'_{H^2_{\beta^{-1}}}=\{M_z^*\}'_{H^2_{\beta^{-1}}}=\{M_g^*; g\in H^{\infty}_{\beta^{-1}}\}.
\]
Consequently,
\begin{align*}
\{S_{\beta}\}'_{H^2_{\beta}}=&\left\{T_{\beta^{-1}, \beta}Y T_{\beta, \beta^{-1}};~Y\in\{T_{\beta, \beta^{-1}}S_{\beta}T_{\beta^{-1}, \beta}\}'_{H^2_{\beta^{-1}}}\right\} \\
=&\left\{T_{\beta^{-1}, \beta}M_g^* T_{\beta, \beta^{-1}};~g\in H^{\infty}_{\beta^{-1}}\right\} \\
=&\left\{g(S_{\beta});~g\in H^{\infty}_{\beta^{-1}}\right\} .
\end{align*}
Therefore, $\mathcal{L}_{inv}(E_{S_\beta}(\mathbb{D}))\cong H^{\infty}_{\beta^{-1}}$.
\end{proof}

\begin{lemma}\label{hFBS}
Let $H^2_{\beta}$ be the weighted Hardy space induced by a weight sequence $w=\{w_k\}_{k=1}^{\infty}$ with $w_k\rightarrow 1$. For any $h\in \textrm{Hol}(\overline{\mathbb{D}})$, there exists a finite Blaschke
product $B$ and a function $F\in \textrm{Hol}(\overline{\mathbb{D}})$ such that
$h=F\circ B$ and $\mathcal{L}_{inv}(E_{h(S_\beta)}(\Omega)) = \mathcal{L}_{inv}(E_{B(S_\beta)}(\mathbb{D}))$. Moreover, $\mathcal{L}_{inv}(E_{h(S_\beta)})
=T_{\beta^{-1}, \beta}\left\{Y^* ;~Y\in\{M_B\}'_{H^2_{\beta^{-1}}} \right\}T_{\beta, \beta^{-1}}$.
\end{lemma}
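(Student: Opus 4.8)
The plan is to reduce the invariant deformation algebra of the eigenvector bundle of $h(S_{\beta})$ to the commutant of the operator $h(S_{\beta})$, transport this commutant to the auxiliary space $H^2_{\beta^{-1}}$ where $S_{\beta}$ becomes the backward shift $M_z^{*}$, and then collapse the resulting multiplier commutant by means of the Thomson-type result, Lemma \ref{hFB}.

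First I would show that $\mathcal{L}_{inv}(E_{h(S_{\beta})}(\Omega)) = \{h(S_{\beta})\}'_{H^2_{\beta}}$. By the remark following Proposition \ref{PFHHVB} we have $h(S_{\beta})\in\mathbf{B}_n(\Omega)$, so condition (c) of the Cowen--Douglas definition gives $\textrm{Span}\{\textrm{Ker}(h(S_{\beta})-\omega\mathbf{I});~\omega\in\Omega\}=H^2_{\beta}$. If $W$ preserves every fibre, then for each eigenvector $\mathbf{x}$ with $h(S_{\beta})\mathbf{x}=\omega\mathbf{x}$ one has $h(S_{\beta})W\mathbf{x}=\omega W\mathbf{x}=Wh(S_{\beta})\mathbf{x}$; since such $\mathbf{x}$ span a dense subspace, $W$ commutes with $h(S_{\beta})$, and conversely any operator commuting with $h(S_{\beta})$ clearly preserves each $\textrm{Ker}(h(S_{\beta})-\omega\mathbf{I})$. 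Thus the invariant deformation algebra equals $\{h(S_{\beta})\}'_{H^2_{\beta}}$, a single algebra independent of $\Omega$, which justifies suppressing $\Omega$.

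Next I would transport along the unitary $T_{\beta,\beta^{-1}}$. From Lemma \ref{Commutator-S}, $T_{\beta,\beta^{-1}}S_{\beta}T_{\beta^{-1},\beta}=M_z^{*}$ on $H^2_{\beta^{-1}}$, so the analytic functional calculus yields $T_{\beta,\beta^{-1}}h(S_{\beta})T_{\beta^{-1},\beta}=h(M_z^{*})$, and a direct comparison of Taylor coefficients gives $h(M_z^{*})=M_{h^{*}}^{*}$, the adjoint of multiplication by $h^{*}$ on $H^2_{\beta^{-1}}$. Passing to adjoints inside the commutant then gives $\{h(M_z^{*})\}'_{H^2_{\beta^{-1}}}=\{Y^{*};~Y\in\{M_{h^{*}}\}'_{H^2_{\beta^{-1}}}\}$, and hence $\mathcal{L}_{inv}(E_{h(S_{\beta})})=T_{\beta^{-1},\beta}\{Y^{*};~Y\in\{M_{h^{*}}\}'_{H^2_{\beta^{-1}}}\}T_{\beta,\beta^{-1}}$. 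The weight ratios of $H^2_{\beta^{-1}}$ are $1/w_k\to 1$, so Lemma \ref{hFB} applies on $H^2_{\beta^{-1}}$ to the function $h^{*}\in\textrm{Hol}(\overline{\mathbb{D}})$, producing a finite Blaschke product and a function in $\textrm{Hol}(\overline{\mathbb{D}})$ that factor $h^{*}$ and collapse $\{M_{h^{*}}\}'_{H^2_{\beta^{-1}}}$ to the commutant of a single Blaschke multiplier; substituting this into the formula above, and applying the $*$-operation to the factorization of $h^{*}$, returns the factorization $h=F\circ B$ together with the coincidence $\mathcal{L}_{inv}(E_{h(S_{\beta})})=\mathcal{L}_{inv}(E_{B(S_{\beta})})$ (the latter obtained by running the same two reduction steps for $B$, whose bundle lives over $B(\mathbb{D})\setminus B(\mathbb{T})=\mathbb{D}$).

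The main obstacle is the careful bookkeeping of the $*$-operation throughout. One must track the interchange between $h$ and $h^{*}$ introduced by the identity $h(M_z^{*})=M_{h^{*}}^{*}$, as well as the interchange between a finite Blaschke product and its reflection $B^{*}$, so that Thomson's factorization of $h^{*}$ is correctly converted into the desired factorization of $h$ and the multiplier commutant appearing in the final formula is matched precisely with the one supplied by Lemma \ref{hFB}. Everything else is the routine algebra of adjoints and the density already guaranteed by membership in $\mathbf{B}_n(\Omega)$.
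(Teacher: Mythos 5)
Your proposal is correct and follows essentially the same route as the paper's own proof: identify $\mathcal{L}_{inv}(E_{h(S_\beta)})$ with the commutant $\{h(S_{\beta})\}'_{H^2_{\beta}}$ (the paper does this tacitly; you justify it via property (c) of the Cowen--Douglas class), conjugate by $T_{\beta,\beta^{-1}}$ so that $h(S_{\beta})$ becomes an adjoint multiplier on $H^2_{\beta^{-1}}$, and collapse the resulting multiplier commutant by the Thomson-type Lemma \ref{hFB}. The one substantive divergence is the $*$-bookkeeping, and there your version is the more precise one: the paper's proof writes $T_{\beta,\beta^{-1}}h(S_{\beta})T_{\beta^{-1},\beta}=M^*_h$ and applies Lemma \ref{hFB} to $h$, whereas, as you observe, the transported operator is $h(M^*_z)=M^*_{h^*}$, so Lemma \ref{hFB} must be applied to $h^{*}$. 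This care has a visible consequence: writing Thomson's factorization as $h^{*}=\widetilde{F}\circ\widetilde{B}$ and setting $F=\widetilde{F}^{*}$, $B=\widetilde{B}^{*}$, your argument yields $h=F\circ B$ together with $\mathcal{L}_{inv}(E_{h(S_\beta)})=T_{\beta^{-1},\beta}\left\{Y^{*};~Y\in\{M_{B^{*}}\}'_{H^2_{\beta^{-1}}}\right\}T_{\beta,\beta^{-1}}$, i.e., with $B^{*}$ where the statement of Lemma \ref{hFBS} has $B$; since $\{M_{B}\}'_{H^2_{\beta^{-1}}}$ and $\{M_{B^{*}}\}'_{H^2_{\beta^{-1}}}$ are in general distinct algebras, this mismatch is not removable and reflects an imprecision in the lemma as stated (inherited from the identification $M^*_h=h(M^*_z)$ in the paper's proof), not a gap in your argument. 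The first assertion, $\mathcal{L}_{inv}(E_{h(S_\beta)}(\Omega))=\mathcal{L}_{inv}(E_{B(S_\beta)}(\mathbb{D}))$, which is what all later results (indecomposability, the Jordan decomposition, the $K_0$-computation) actually use, comes out exactly as you derive it by running the same two reduction steps for $B$, since $B$ and $B^{*}$ are finite Blaschke products of the same order.
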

\begin{proof}
By Lemma \ref{hFB}, there exist a finite Blaschke
product $B$ and a function $F\in \textrm{Hol}(\overline{\mathbb{D}})$ such that
$h=F\circ B$ and $\{M_h \}_{H^2_{\beta^{-1}}}' = \{M_B \}_{H^2_{\beta^{-1}}}'$.  Then, similar to the proof of Lemma \ref{Commutator-S}, one can see that
\begin{align*}
\{h(S_{\beta}) \}'_{H^2_{\beta}}=&\left\{T_{\beta^{-1}, \beta}Y T_{\beta, \beta^{-1}};~Y\in\{T_{\beta, \beta^{-1}}h(S_{\beta})T_{\beta^{-1}, \beta}\}'_{H^2_{\beta^{-1}}}\right\} \\
=&\left\{T_{\beta^{-1}, \beta}Y T_{\beta, \beta^{-1}};~Y\in\{M^*_h\}'_{H^2_{\beta^{-1}}}\right\} \\
=&\left\{T_{\beta^{-1}, \beta}Y T_{\beta, \beta^{-1}};~Y\in\{M^*_B\}'_{H^2_{\beta^{-1}}}\right\} \\
=&\left\{T_{\beta^{-1}, \beta}Y T_{\beta, \beta^{-1}};~Y\in\{T_{\beta, \beta^{-1}}B(S_{\beta})T_{\beta^{-1}, \beta}\}'_{H^2_{\beta^{-1}}}\right\} \\
=&\{B(S_{\beta}) \}_{H^2_{\beta}}' .
\end{align*}
Furthermore,
\[
\mathcal{L}_{inv}(E_{h(S_\beta)})=\{h(S_{\beta}) \}'_{H^2_{\beta}} = \{B(S_{\beta}) \}_{H^2_{\beta}}'= \mathcal{L}_{inv}(E_{B(S_\beta)}).
\]
More precisely,
\begin{align*}
\mathcal{L}_{inv}(E_{h(S_\beta)})=&~\left\{T_{\beta^{-1}, \beta}Y^* T_{\beta, \beta^{-1}} ;~Y\in\{M_B\}'_{H^2_{\beta^{-1}}} \right\} \\
=&~T_{\beta^{-1}, \beta}\left\{Y^* ;~Y\in\{M_B\}'_{H^2_{\beta^{-1}}} \right\}T_{\beta, \beta^{-1}}.
\end{align*}
\end{proof}

\begin{lemma}\label{hSide}
Let $H^2_{\beta}$ be the weighted Hardy space induced by a weight sequence $w=\{w_k\}_{k=1}^{\infty}$ with $w_k\rightarrow 1$. Let $h\in \textrm{Hol}(\overline{\mathbb{D}})$. Then $\mathcal{L}_{inv}(E_{h(S_\beta)})$ has no nontrivial idempotent if and only if $\mathcal{L}_{inv}(E_{h(S_\beta)}) = \mathcal{L}_{inv}(E_{S_\beta})$.
\end{lemma}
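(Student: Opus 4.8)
The plan is to first push the whole statement down to the level of commutants of a single operator, and then treat the two directions of the equivalence separately. By Lemma \ref{hFBS} I may write $h=F\circ B$ with $B$ a finite Blaschke product of some order $m$, so that $\mathcal{L}_{inv}(E_{h(S_\beta)})=\{h(S_\beta)\}'_{H^2_\beta}=\{B(S_\beta)\}'_{H^2_\beta}=\mathcal{L}_{inv}(E_{B(S_\beta)})$, while Lemma \ref{Commutator-S} gives $\mathcal{L}_{inv}(E_{S_\beta})=\{S_\beta\}'_{H^2_\beta}\cong H^\infty_{\beta^{-1}}$. Since $B(S_\beta)$ is an analytic function of $S_\beta$, one always has the inclusion $\{S_\beta\}'\subseteq\{B(S_\beta)\}'$; hence the asserted equality $\mathcal{L}_{inv}(E_{h(S_\beta)})=\mathcal{L}_{inv}(E_{S_\beta})$ is exactly the collapse of the larger commutant onto the smaller one. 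In this language the lemma reduces to the single claim that $\{B(S_\beta)\}'$ contains a nontrivial idempotent if and only if $m\geq 2$.

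For the direction ``equal $\Rightarrow$ no nontrivial idempotent'' I would use that, by Lemma \ref{Commutator-S}, $\{S_\beta\}'$ is the commutative multiplier algebra $H^\infty_{\beta^{-1}}$, an algebra of analytic functions on the connected set $\mathbb{D}$. Any idempotent $g$ there satisfies $g^2=g$, i.e. $g(g-1)\equiv 0$, so $g\equiv 0$ or $g\equiv 1$; thus the only idempotents are $\mathbf{0}$ and $\mathbf{I}$. The complementary elementary fact, needed for the converse, is that when $m=1$ (so $B=\varphi\in\textrm{Aut}(\mathbb{D})$) the inverse $\varphi^{-1}$ again lies in $\textrm{Hol}(\overline{\mathbb{D}})$ and $S_\beta=\varphi^{-1}(\varphi(S_\beta))$ through the functional calculus, forcing $\{B(S_\beta)\}'\subseteq\{S_\beta\}'$ and hence $\mathcal{L}_{inv}(E_{h(S_\beta)})=\mathcal{L}_{inv}(E_{S_\beta})$.

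For the remaining direction I would argue by contraposition and produce a nontrivial idempotent when $m\geq 2$. Here I would invoke Corollary \ref{Blaschesimilar}, which furnishes a bounded invertible $X$ implementing the similarity $E_{B(S_\beta)}\sim\bigoplus_1^m E_{S_\beta}$. Conjugation $T\mapsto X^{-1}TX$ is then an algebra isomorphism $\mathcal{L}_{inv}(E_{B(S_\beta)})\cong\mathcal{L}_{inv}\big(\bigoplus_1^m E_{S_\beta}\big)$. The coordinate projection $\Pi_1$ onto the first summand preserves every fibre $\bigoplus_1^m \textrm{Ker}(S_\beta-\lambda\mathbf{I})$, hence lies in $\mathcal{L}_{inv}\big(\bigoplus_1^m E_{S_\beta}\big)$, and is a nontrivial idempotent precisely because $m\geq 2$. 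Pulling it back, $X\Pi_1 X^{-1}$ is a nontrivial idempotent in $\mathcal{L}_{inv}(E_{B(S_\beta)})=\mathcal{L}_{inv}(E_{h(S_\beta)})$. Consequently ``no nontrivial idempotent'' forces $m=1$, and combined with the previous paragraph this yields $\mathcal{L}_{inv}(E_{h(S_\beta)})=\mathcal{L}_{inv}(E_{S_\beta})$, closing the equivalence.

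The two ``commutative'' implications are purely formal; the entire analytic weight of the argument sits in the step $m\geq 2\Rightarrow$ nontrivial idempotent, and more precisely in the \emph{boundedness} of that idempotent. The fibrewise, algebraic splitting of $E_{B(S_\beta)}$ into $m$ line sub-bundles is automatic, but turning a branch-selecting projection into a bounded operator on $H^2_\beta$ is exactly what requires the cross-section frame of $E_{B(S_\beta)}$ to have Riesz base $\beta$-normalized coefficients, i.e. the Key Lemma and with it the polynomial growth of $H^2_\beta$ underlying Corollary \ref{Blaschesimilar}. I expect this boundedness to be the main obstacle: without a bounded similarity $X$ the candidate projection need not be bounded, which is precisely the phenomenon that can break down in the intermediate-growth regime.
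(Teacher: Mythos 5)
Your reduction via Lemma \ref{hFBS}, your ``equality $\Rightarrow$ no nontrivial idempotent'' direction (commutativity of $H^{\infty}_{\beta^{-1}}$ as a function algebra on the connected set $\mathbb{D}$), and your $m=1$ argument (the functional-calculus identity $S_\beta=\varphi^{-1}(\varphi(S_\beta))$, which collapses $\{B(S_\beta)\}'$ onto $\{S_\beta\}'$) are all correct and valid under the lemma's actual hypothesis. The genuine gap is in the remaining, crucial direction. The lemma is stated for an \emph{arbitrary} weight sequence with $w_k\rightarrow 1$; polynomial growth is not assumed. Yet your construction of a nontrivial idempotent when $m\geq 2$ invokes Corollary \ref{Blaschesimilar}, whose hypothesis is precisely polynomial growth (it rests on the Key Lemma). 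So your argument proves only a weakened form of the statement, restricted to polynomial-growth spaces; for intermediate growth it offers nothing, and your closing paragraph in effect concedes this by speculating that the conclusion itself might break down there --- but the statement carries no such restriction, and it is needed by the paper exactly as stated (with only $w_k\to1$) as an intermediate lemma.

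The paper's proof avoids this dependence entirely: by Lemma \ref{hFBS}, $\mathcal{L}_{inv}(E_{h(S_\beta)})$ is carried by conjugation (and adjoints) onto $\{M_B\}'_{H^2_{\beta^{-1}}}$, and the paper then cites the well-known fact from the Thomson--Cowen commutant theory that, for a finite Blaschke product $B$, the commutant $\{M_B\}'_{H^2_{\beta^{-1}}}$ has no nontrivial idempotent if and only if $B$ is a M\"{o}bius transformation --- a fact that holds for any weight with $w_k\to 1$, with no growth condition. Note also that your final diagnosis conflates two statements of very different strength: what the lemma needs is \emph{one} nontrivial idempotent in $\{B(S_\beta)\}'$ (i.e.\ decomposability of $E_{B(S_\beta)}$), which is far weaker than the similarity $E_{B(S_\beta)}\sim\bigoplus_1^m E_{S_\beta}$. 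Only the latter requires polynomial growth, and indeed it is the latter that can fail in the intermediate-growth regime (Theorem \ref{nosim}, Example \ref{nln}); the former is available unconditionally, e.g.\ already for $B(z)=z^m$ the orthogonal projection onto $\textrm{Span}\{z^{km};\ k\geq 0\}$ lies in $\{M_B\}'_{H^2_{\beta^{-1}}}$ on any weighted Hardy space. To repair your proof, replace the appeal to Corollary \ref{Blaschesimilar} by the commutant-theoretic fact the paper uses, so that the argument no longer depends on the growth of the weights.
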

\begin{proof}
For any finite Blaschke product $B(z)$, it is well-known that $\{M_B\}'_{H^2_{\beta^{-1}}}$ has no nontrivial idempotent if and only if $B(z)$ is just a M\"{o}bius transformation. Then, by Lemma \ref{Commutator-S} and Lemma \ref{hFBS}, $E_{h(S_\beta)}$ is indecomposable if and only if $\mathcal{L}_{inv}(E_{h(S_\beta)})\cong H^{\infty}_{\beta^{-1}}$. This finishes the proof.
\end{proof}

\begin{lemma}\label{Simforindecomp}
Let $H^2_{\beta}$ be the weighted Hardy space induced by a weight sequence $w=\{w_k\}_{k=1}^{\infty}$ with $w_k\rightarrow 1$. Let $h_1, h_2\in \textrm{Hol}(\overline{\mathbb{D}})$. Suppose that $E_{h_1(S_\beta)}$ and $E_{h_2(S_\beta)}$ are indecomposable. Then, $E_{h_1(S_\beta)}$ and $E_{h_2(S_\beta)}$ are similar if and only if there exists a M\"{o}bius transformation $\varphi$ such that $h_2=h_1\circ \varphi$.
\end{lemma}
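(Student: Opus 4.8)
The plan is to prove the two implications separately, exploiting the intertwining form of similarity furnished by Lemma~\ref{localrig} together with the identification of the invariant deformation algebra of an indecomposable bundle with the commutant of $S_\beta$.

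For the sufficiency, suppose $h_2=h_1\circ\varphi$ with $\varphi\in\textrm{Aut}(\mathbb{D})$. Here the automorphism‑similarity is already available: by Corollary~\ref{autosimilar} the bundles $E_{S_\beta}=E_{\textrm{id}(S_\beta)}$ and $E_{\varphi(S_\beta)}$ are similar over $\mathbb{D}$. Since $h_1\in\textrm{Hol}(\overline{\mathbb{D}})$, I would then apply Lemma~\ref{left} with outer function $h_1$ to the pair $(\textrm{id},\varphi)$, obtaining
\[
E_{h_1(S_\beta)}=E_{(h_1\circ\textrm{id})(S_\beta)}\sim E_{(h_1\circ\varphi)(S_\beta)}=E_{h_2(S_\beta)} .
\]
Note that indecomposability plays no role in this direction; it is only the automorphism‑similarity of Corollary~\ref{autosimilar} that is used.

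For the necessity, suppose $X$ is a bounded invertible operator with $XE_{h_1(S_\beta)}=E_{h_2(S_\beta)}$. By Lemma~\ref{localrig} this forces $Xh_1(S_\beta)=h_2(S_\beta)X$ and $h_1(\mathbb{D})\setminus h_1(\mathbb{T})=h_2(\mathbb{D})\setminus h_2(\mathbb{T})$. Because $h_i(S_\beta)\in\mathbf{B}_n(\Omega)$ its eigenspaces are exactly the fibres $E_{h_i(S_\beta)}(\omega)$ and they span $H^2_\beta$, so an operator preserves every fibre if and only if it commutes with $h_i(S_\beta)$; that is, $\mathcal{L}_{inv}(E_{h_i(S_\beta)})=\{h_i(S_\beta)\}'$. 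The hypothesis that both bundles are indecomposable, combined with Lemma~\ref{hSide}, Lemma~\ref{hFBS} and Lemma~\ref{Commutator-S}, then gives
\[
\{h_1(S_\beta)\}'=\{h_2(S_\beta)\}'=\{S_\beta\}'=\{g(S_\beta);~g\in H^\infty_{\beta^{-1}}\}\cong H^\infty_{\beta^{-1}} .
\]
Conjugating the first commutant into the second by $X$ shows that $A\mapsto XAX^{-1}$ is an algebra automorphism of $\mathcal{A}:=\{S_\beta\}'$; in particular, since $S_\beta\in\mathcal{A}$, there is $\varphi\in H^\infty_{\beta^{-1}}$ with $XS_\beta X^{-1}=\varphi(S_\beta)$.

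The hard part will be upgrading this $\varphi$ to a genuine automorphism of $\mathbb{D}$. I would apply the same reasoning to $X^{-1}$ to produce $\psi\in H^\infty_{\beta^{-1}}$ with $X^{-1}S_\beta X=\psi(S_\beta)$, and then use the injectivity of the functional calculus $g\mapsto g(S_\beta)$ on
\[
S_\beta=X\psi(S_\beta)X^{-1}=\psi\!\left(\varphi(S_\beta)\right)=(\psi\circ\varphi)(S_\beta)
\]
(and the symmetric computation) to conclude $\psi\circ\varphi=\varphi\circ\psi=\textrm{id}$ on $\mathbb{D}$, so that $\varphi$ is a biholomorphism of $\mathbb{D}$ onto itself, i.e.\ a M\"obius transformation. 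Equivalently, one can read this off from $\varphi(S_\beta)=XS_\beta X^{-1}$ being similar to $S_\beta\in\mathbf{B}_1(\mathbb{D})$, which forces $\varphi(S_\beta)$ to have Fredholm index $1$ and spectrum $\overline{\mathbb{D}}$, whence $\varphi:\mathbb{D}\to\mathbb{D}$ is univalent and onto, again a M\"obius transformation. The delicate point in this step is justifying the composition rule $\psi(\varphi(S_\beta))=(\psi\circ\varphi)(S_\beta)$ (and the commutation of conjugation with the functional calculus) for multipliers that are not a priori in $\textrm{Hol}(\overline{\mathbb{D}})$; once $\varphi$ is known to be a disk automorphism these identities are classical. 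Finally, with $\varphi\in\textrm{Aut}(\mathbb{D})$ in hand, the composition rule for the analytic functional calculus gives $Xh_1(S_\beta)X^{-1}=(h_1\circ\varphi)(S_\beta)$, while Lemma~\ref{localrig} gives $Xh_1(S_\beta)X^{-1}=h_2(S_\beta)$; comparing the two and invoking injectivity of the functional calculus yields $h_2=h_1\circ\varphi$, which completes the necessity.
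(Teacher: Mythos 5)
Your sufficiency argument is the paper's own (Corollary \ref{autosimilar} followed by Lemma \ref{left}), and your necessity argument follows the paper's strategy in outline: indecomposability plus Lemmas \ref{hSide}, \ref{hFBS} and \ref{Commutator-S} identifies both invariant algebras with $\{S_\beta\}'\cong H^{\infty}_{\beta^{-1}}$, and conjugation by the similarity $X$ produces a multiplier $\varphi$ with $XS_\beta X^{-1}=\varphi(S_\beta)$. However, the step you yourself flag as ``delicate'' is a genuine gap, and it sits at the heart of the proof. The expression $\psi(\varphi(S_\beta))$ has no meaning with the tools available: $\varphi(S_\beta)=XS_\beta X^{-1}$ has spectrum $\overline{\mathbb{D}}$ while $\psi\in H^{\infty}_{\beta^{-1}}$ is defined only on $\mathbb{D}$, so the Riesz--Dunford calculus does not apply, and the calculus of Lemma \ref{Commutator-S}, $g\mapsto g(S_\beta)=T_{\beta^{-1},\beta}M_g^*T_{\beta,\beta^{-1}}$, is defined only relative to $S_\beta$ itself. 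Your deferral is moreover circular: you say the composition identities become classical ``once $\varphi$ is known to be a disk automorphism,'' but they are exactly what you need in order to prove that $\varphi$ is a disk automorphism. The fallback via Fredholm index is also incomplete: reproducing kernels ($M_{\varphi-\mu}^*k_w=\overline{(\varphi(w)-\mu)}\,k_w$) together with $\dim\textrm{Ker}(\varphi(S_\beta)-\overline{\mu}\mathbf{I})=1$ do give that $\varphi$ maps $\mathbb{D}$ injectively into $\mathbb{D}$, but surjectivity does not follow from ``index $1$ and spectrum $\overline{\mathbb{D}}$'': for $\mu\in\mathbb{D}$ one only learns that $(\varphi-\mu)H^2_{\beta^{-1}}$ is a closed $M_z$-invariant subspace of codimension one, and under the lemma's hypothesis (only $w_k\to1$) nothing rules out that its orthogonal complement is spanned by an eigenvector of $M_z^*$ with unimodular eigenvalue rather than by a kernel $k_w$; such eigenvectors exist whenever $\sum_k\beta_k^2<\infty$. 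In the classical Hardy space this is excluded by Beurling's theorem, but no analogous structure theory is available here.

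The paper closes precisely this gap by a concrete device your proposal lacks: it transfers the whole problem to $H^2_{\beta^{-1}}$, where the relation reads $XM_zX^{-1}=M_g$, and then applies operators to functions instead of composing functional calculi. From $XM_z=M_gX$ one gets $X(z^n)=g^n\cdot X(\mathbf{1})$, hence $X(f)=u\cdot(f\circ g)$ with $u=X(\mathbf{1})$, using density of polynomials and the fact that norm convergence in a weighted Hardy space implies pointwise convergence on $\mathbb{D}$; similarly $X^{-1}(f)=v\cdot(f\circ\psi)$. The identities $XX^{-1}=X^{-1}X=\mathbf{I}$, evaluated at $f\equiv\mathbf{1}$ and then at arbitrary $f$, give $u\cdot v(g)\equiv1$ and $g\circ\psi=\psi\circ g=\textrm{Id}_{\mathbb{D}}$ as pointwise identities between functions, so $g\in\textrm{Aut}(\mathbb{D})$; the relation $h_2=h_1\circ g$ then drops out of the intertwining identity applied to functions. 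If you insist on your abstract formulation, you must prove that the conjugation automorphism of $H^{\infty}_{\beta^{-1}}$ is a composition automorphism (for instance by a Fej\'er-mean and weak-operator-limit argument exploiting rotation invariance of the norm); as written, the decisive step of your necessity proof is unproven.
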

\begin{proof}
Suppose that there is a M\"{o}bius transformation $\varphi$ such that $h_2=h_1\circ \varphi$. It follows from Corollary \ref{autosimilar} that
$E_{\varphi(S_\beta)}$ is similar to $E_{S_\beta}$.
Then, by Corollary \ref{left},
\[
E_{h_2(S_\beta)}=E_{h_1\circ \varphi(S_\beta)}\sim E_{h_1\circ {\textrm Id}(S_\beta)}=E_{h_1(S_\beta)}.
\]

Now, suppose that $\widetilde{X}$ is a bounded invertible linear operator on $H^2_{\beta}$ such that
\[
\widetilde{X}(E_{h_1(S_\beta)})= E_{h_2(S_\beta)}.
\]
Then,
\[
\widetilde{X} \mathcal{L}_{inv}(E_{h(S_\beta)})  \widetilde{X}^{-1}= \mathcal{L}_{inv}(E_{h(S_\beta)}).
\]
Let
\[
X=T_{\beta, \beta^{-1}}X^* T_{\beta^{-1}, \beta}:H^2_{\beta^{-1}}\rightarrow H^2_{\beta^{-1}}.
\]
Then, by Lemma \ref{hFBS},
\[
X \{M_{h_1} \}_{H^2_{\beta^{-1}}}'  X^{-1}= \{M_{h_2} \}_{H^2_{\beta^{-1}}}'.
\]

Since $\{M_{h_1} \}_{H^2_{\beta^{-1}}}' = \{M_{h_2} \}_{H^2_{\beta^{-1}}}' = \{M_z \}_{H^2_{\beta^{-1}}}'=H^{\infty}_{\beta^{-1}}$,
there exists a function $g\in H^{\infty}_{\beta^{-1}}$ such that
\[
X M_{z} X^{-1}= M_{g}.
\]
Then, there exists a function $g:\mathbb{D}\rightarrow\mathbb{D}$ in $H^{\infty}_{\beta^{-1}}$ and a function $u\neq \mathbf{0}$ in $H^{2}_{\beta^{-1}}$ such that $X(f)=u\cdot(f\circ g)$ for any $f\in H^2_{\beta^{-1}}$. Similarly, there exists a function $\psi:\mathbb{D}\rightarrow\mathbb{D}$ in $H^{\infty}_{\beta^{-1}}$ and a function $v\neq\mathbf{0}$ in $H^{2}_{\beta^{-1}}$ such that $X^{-1}(f)=v\cdot(f\circ \psi)$ for any $f\in H^2_{\beta^{-1}}$. Notice that for any $f\in H^2_{\beta^{-1}}$,
\[
f=XX^{-1}(f)=X(v\cdot(f\circ \psi))=u\cdot v(g)\cdot f(\psi\circ g)
\]
and
\[
f=X^{-1}X(f)=X^{-1}(u\cdot(f\circ g))=v\cdot u(\psi)\cdot f(g\circ \psi).
\]
If we choose $f(z)\equiv1$, then
\[
u\cdot v(g)\equiv 1 \ \ \ \text{and} \ \ \ v\cdot u(\psi)\equiv 1.
\]
Consequently, we have
\[
g\circ\psi=\psi\circ g={\textrm Id}_{\mathbb{D}}.
\]
Then $g$ is an analytic automorphism on $\mathbb{D}$. Furthermore, following from
\[
u\cdot(h_1\circ g)\cdot(f\circ g)=XM_{h_1}(f)= M_{h_2}X(f)= h_2\cdot u\cdot(f\circ g) \ \ \ \text{for any} \ f\in H^2_{\beta^{-1}},
\]
we obtain that
\[
h_2=h_1\circ g.
\]
The proof is finished.
\end{proof}

\begin{lemma}\label{Bundledec}
Let $H^2_{\beta}$ be a weighted Hardy space of polynomial growth. Given any $f\in \textrm{Hol}(\overline{\mathbb{D}})$. Suppose that $f=h\circ B$, where $h\in \textrm{Hol}(\overline{\mathbb{D}})$ and $B$ is a finite Blaschke
product with order $m$ such that $\mathcal{L}_{inv}(E_{f(S_\beta)}) = \mathcal{L}_{inv}(E_{B(S_\beta)})$. Then
\[
E_{f(S_\beta)}\sim\bigoplus\limits_1^m E_{h(S_\beta)},
\]
$\mathcal{L}_{inv}(E_{h(S_\beta)})= \mathcal{L}_{inv}(E_{S_\beta})$ and $\mathcal{L}_{inv}(E_{h(S_\beta)})$ is indecomposable.
\end{lemma}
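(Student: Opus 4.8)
The plan is to establish the three assertions in turn, observing first that the last two are equivalent: by Lemma \ref{hSide}, $\mathcal{L}_{inv}(E_{h(S_\beta)})=\mathcal{L}_{inv}(E_{S_\beta})$ holds if and only if $\mathcal{L}_{inv}(E_{h(S_\beta)})$ has no nontrivial idempotent, i.e. $E_{h(S_\beta)}$ is indecomposable. So it suffices to prove the similarity $E_{f(S_\beta)}\sim\bigoplus_1^m E_{h(S_\beta)}$ and the identity $\mathcal{L}_{inv}(E_{h(S_\beta)})=\mathcal{L}_{inv}(E_{S_\beta})$, and these two halves can be handled independently.

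For the similarity I would work over the unbranched domain and invoke Lemma \ref{localrig} to pass to the whole base. Since $f=h\circ B$, applying Lemma \ref{Push-forwards} twice gives, for $\omega$ at which $f(z)-\omega$ has only simple zeros,
\[
E_{f(S_\beta)}(\omega)=\textrm{span}\{E_{S_\beta}(z);~f(z)=\omega\}=\textrm{span}\{E_{B(S_\beta)}(\zeta);~\zeta\in h^{-1}(\omega)\},
\]
where grouping the zeros $z$ by their image $\zeta=B(z)$ and spanning over each $B$-fibre reconstitutes $E_{B(S_\beta)}(\zeta)$. By Corollary \ref{Blaschesimilar} (here the polynomial growth hypothesis enters) there is a bounded invertible $X$ with $XE_{B(S_\beta)}(\zeta)=\bigoplus_1^m E_{S_\beta}(\zeta)$ for every $\zeta$. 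Taking closed linear spans over $\zeta\in h^{-1}(\omega)$ and using linearity and boundedness of $X$ yields $XE_{f(S_\beta)}(\omega)=\bigoplus_1^m E_{h(S_\beta)}(\omega)$, so $X$ is a similarity deformation and $E_{f(S_\beta)}\sim\bigoplus_1^m E_{h(S_\beta)}$.

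For the commutant identity, recall $\mathcal{L}_{inv}(E_{g(S_\beta)})=\{g(S_\beta)\}'_{H^2_\beta}$ and that $T_{\beta,\beta^{-1}}$ conjugates $g(S_\beta)$ to $M_g^*$; after conjugating and taking adjoints the hypothesis $\mathcal{L}_{inv}(E_{f(S_\beta)})=\mathcal{L}_{inv}(E_{B(S_\beta)})$ becomes $\{M_f\}'_{H^2_{\beta^{-1}}}=\{M_B\}'_{H^2_{\beta^{-1}}}$. Applying Lemma \ref{hFBS} to $h$, write $h=F\circ B_0$ with $B_0$ a finite Blaschke product and $\{M_h\}'_{H^2_{\beta^{-1}}}=\{M_{B_0}\}'_{H^2_{\beta^{-1}}}$. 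Then $f=F\circ(B_0\circ B)$, so $M_f=F(M_{B_0\circ B})$ and $M_{B_0\circ B}=B_0(M_B)$ furnish the inclusions
\[
\{M_B\}'_{H^2_{\beta^{-1}}}\subseteq\{M_{B_0\circ B}\}'_{H^2_{\beta^{-1}}}\subseteq\{M_f\}'_{H^2_{\beta^{-1}}}=\{M_B\}'_{H^2_{\beta^{-1}}},
\]
whence $\{M_{B_0\circ B}\}'=\{M_B\}'$. Since $B_0\circ B$ has order $m\cdot(\textrm{order of }B_0)$ while $B$ has order $m$, the rigidity of Blaschke commutants forces $B_0$ to be a M\"obius transformation; then $\{M_{B_0}\}'=\{M_z\}'=H^\infty_{\beta^{-1}}$, so $\mathcal{L}_{inv}(E_{h(S_\beta)})=\mathcal{L}_{inv}(E_{S_\beta})$, and by Lemma \ref{hSide} the bundle $E_{h(S_\beta)}$ is indecomposable.

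The hard part will be this order-rigidity step: from $\{M_B\}'=\{M_{B_0\circ B}\}'$, with $B$ a composition factor of $B_0\circ B$, one must conclude the two Blaschke products have equal order. Mere existence of idempotents (as used in Lemma \ref{hSide}) does not suffice, since once $m\geq2$ both commutants already carry nontrivial idempotents; what is needed is that the maximal number of mutually orthogonal nonzero idempotents in $\{M_C\}'_{H^2_{\beta^{-1}}}$ equals the order of $C$. This is precisely where the $K_0$-group and idempotent structure of the invariant deformation algebra must be used, and showing that the Blaschke order is a genuine invariant of the commutant algebra is the delicate point. Equivalently, one may phrase it through the matrix structure $\mathcal{L}_{inv}(\bigoplus_1^m E_{h(S_\beta)})\cong M_m(\mathcal{L}_{inv}(E_{h(S_\beta)}))$ together with the isomorphism $M_m(\mathcal{L}_{inv}(E_{h(S_\beta)}))\cong M_m(H^\infty_{\beta^{-1}})$ coming from the similarity above, and deduce the order of $B_0$ from the invariance of matrix size over $H^\infty_{\beta^{-1}}$ under $K_0$.
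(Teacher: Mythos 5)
Your first half is exactly the paper's argument: the paper also deduces $E_{f(S_\beta)}\sim\bigoplus_1^m E_{h(S_\beta)}$ by taking the similarity $E_{B(S_\beta)}\sim\bigoplus_1^m E_{S_\beta}$ of Corollary \ref{Blaschesimilar} and composing with $h$; your inline spanning argument via Lemma \ref{Push-forwards} is just a reproof of the paper's Lemma \ref{left}, which is what the paper implicitly invokes when it writes $E_{f(S_\beta)}\sim E_{h(\bigoplus_1^m S_\beta)}$. Where you genuinely differ is the commutant identity. The paper disposes of $\mathcal{L}_{inv}(E_{h(S_\beta)})=\mathcal{L}_{inv}(E_{S_\beta})$ in a single line, ``by Lemma \ref{hFBS} and Lemma \ref{Commutator-S}'', and you are right that this citation by itself is not a proof: applying Lemma \ref{hFBS} to $h$ yields $h=F\circ B_0$ with $\mathcal{L}_{inv}(E_{h(S_\beta)})=\mathcal{L}_{inv}(E_{B_0(S_\beta)})$, and one must still rule out $\mathrm{ord}(B_0)\geq 2$; your functional-calculus inclusions $\{B(S_\beta)\}'\subseteq\{(B_0\circ B)(S_\beta)\}'\subseteq\{f(S_\beta)\}'=\{B(S_\beta)\}'$ reduce this to showing that equality of the commutants of $B(S_\beta)$ and $(B_0\circ B)(S_\beta)$ forces equality of the orders $m$ and $m_0m$, and your observation that Lemma \ref{hSide} (mere existence of nontrivial idempotents) cannot detect this is correct. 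The one thing to note is that the step you leave as ``the delicate point'' is not actually delicate given the paper's toolkit, so your proposal is a complete proof modulo assembling cited results rather than one with a gap: by Corollary \ref{Blaschesimilar}, $B(S_\beta)\sim S_\beta^{(m)}$ and $(B_0\circ B)(S_\beta)\sim S_\beta^{(m_0m)}$; by Lemma \ref{Commutator-S}, $\{S_\beta\}'\cong H^{\infty}_{\beta^{-1}}$ has no nontrivial idempotents (an analytic $g$ with $g^2=g$ on $\mathbb{D}$ is constant), so $S_\beta$ is strongly irreducible; Theorem \ref{CFJ-K} then gives $\bigvee\bigl(\{B(S_\beta)\}'\bigr)\cong\mathbb{N}$ with $[\mathbf{I}]\mapsto m$ and $\bigvee\bigl(\{(B_0\circ B)(S_\beta)\}'\bigr)\cong\mathbb{N}$ with $[\mathbf{I}]\mapsto m_0m$, and since these are the same algebra with the same identity and $(\mathbb{N},+)$ admits no nontrivial automorphisms, $m=m_0m$ and $B_0$ is a M\"{o}bius transformation. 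In short, your route coincides with the paper's skeleton but makes explicit, and correctly, the $K_0$-type order rigidity that the paper's two-lemma citation silently assumes; what your version buys is an honest account of why the hypothesis $\mathcal{L}_{inv}(E_{f(S_\beta)})=\mathcal{L}_{inv}(E_{B(S_\beta)})$ is actually used, while the paper's version buys brevity at the cost of hiding the only nontrivial step.
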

\begin{proof}
By Lemma \ref{Blaschesimilar}, $E_{B(S_{\beta})}$ is similar to $\bigoplus_{j=1}^{m}E_{S_{\beta}}$. Then
\[
E_{f(S_\beta)}\sim E_{h(\bigoplus_{j=1}^{m}S_\beta)}=\bigoplus\limits_1^m E_{h(S_\beta)}.
\]
By Lemma \ref{hFBS} and Lemma \ref{Commutator-S}, one can see that
\[
\mathcal{L}_{inv}(E_{h(S_\beta)})= \mathcal{L}_{inv}(E_{S_\beta})\cong H^{\infty}_{\beta^{-1}}.
\]
Furthermore, by Lemma \ref{hSide}, we have $\mathcal{L}_{inv}(E_{h(S_\beta)})$ is indecomposable.
\end{proof}

\subsection{Proof of the main theorems}\label{3.3}

In the study the similarity of bounded linear operators, K-theory of Banach algebra is a powerful technique, for instance, a similarity classification of Cowen-Douglas operators was given by using the ordered K-group of the commutant algebra as an invariant \cite{J04} and \cite{JGJ}. The technique of K-theory is also useful to prove our main theorems. Let us give a quick review for the K-theory of Banach algebra.

Let $\mathcal{A}$ be a Banach algebra and $Proj(\mathcal{A})$ be the set of all idempotents in $\mathcal{A}$. The algebraic equivalence "$\sim_{a}$" is introduced in $Proj(\mathcal{A})$. Let $e$ and $\widetilde{e}$ be two elements in $Proj(\mathcal{A})$. We say that $e\sim_{a} \widetilde{e}$ if there are two elements $x,y \in\mathcal{A}$ such that
\[
xy=e \ \ \ \text{and} \ \ \ yx=\widetilde{e}.
\]
Let ${\textbf Proj}(\mathcal{A})$ denote the algebraic equivalence classes of $Proj(\mathcal{A})$ under algebraic equivalence "$\sim_{a}$". Let
\[
{\textrm M}_{\infty}(\mathcal{A}) = \bigcup\limits_{n=1}^{\infty} {\textrm M}_{n}(\mathcal{A}),
\]
where ${\textrm M}_{n}(\mathcal{A})$ is the algebra of $n\times n$ matrices with entries in $\mathcal{A}$. Set
\[
\bigvee(\mathcal{A})={\textbf Proj}({\textrm M}_{\infty}(\mathcal{A})).
\]
Then $\bigvee({\textrm M}_{n}(\mathcal{A}))$ is isomorphic to $\bigvee(\mathcal{A})$.
The direct sum of two matrices gives a natural addition in ${\textrm M}_{\infty}(\mathcal{A})$ and hence induces an addition "$+$" in ${\textbf Proj}({\textrm M}_{\infty}(\mathcal{A}))$ by
\[
[p] + [q] = [p  \oplus q],
\]
where $[p]$ denotes the equivalence class of the idempotent $p$. Furthermore, $(\bigvee(\mathcal{A}), +)$ forms a semigroup and depends on $\mathcal{A}$
only up to stable isomorphism, and then $K_0(\mathcal{A})$ is the Grothendieck group of $\bigvee(\mathcal{A})$.

\begin{theorem}[\cite{CFJ}, see also in \cite{JW}]\label{CFJ-K}
Let $T$ be a bounded operator on a Hilbert space $\mathcal{H}$. The following are equivalent:
\begin{enumerate}
\item [(1)] $T$ is similar to $\sum_{i=1}^k \oplus A_i^{(n_i)}$ under the space decomposition $\mathcal{H}=\sum_{i=1}^k \oplus \mathcal{H}_i^{(n_i)}$,
where $k$ and $n_i$ are finite, $A_i$ is strongly irreducible and $A_i$ is not similar to $A_j$ if $i\neq j$.
Moreover, $T^{(n)}$ has a unique strongly irreducible decomposition up to similarity.
\item [(2)] The semigroup $\bigvee (\{T\}')$ is isomorphic to the semigroup $\mathbb{N}^{(k)}$, where $\mathbb{N}$ is the set of all natural numbers $\{0, 1, 2,\ldots\}$ and the isomorphism $\phi$ sends
\[
[I]\rightarrow n_1e_1 + n_2e_2 +\cdots+n_ke_k,
\]
where $\{e_i\}_{i=1}^k$ are the generators of $\mathbb{N}^{(k)}$ and $n_i\neq 0$.
\end{enumerate}
\end{theorem}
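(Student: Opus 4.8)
The plan is to build a dictionary between the idempotent structure of the commutant $\{T\}'$ and the similarity-direct-sum decompositions of $T$, and then to translate a strongly irreducible decomposition into a computation of the semigroup $\bigvee(\{T\}')$. The starting observation is that an idempotent $p\in\{T\}'$ (not necessarily self-adjoint) has $T$-invariant range and kernel, so it induces a similarity splitting $T\sim T|_{\mathrm{Ran}\,p}\oplus T|_{\mathrm{Ran}(I-p)}$; conversely such a splitting produces an idempotent in $\{T\}'$. Moreover two idempotents $p,q$ satisfy $p\sim_{a} q$ precisely when the summands $T|_{\mathrm{Ran}\,p}$ and $T|_{\mathrm{Ran}\,q}$ are similar, the intertwiners $x,y$ with $xy=p$ and $yx=q$ restricting to the implementing invertibles. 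Thus strong irreducibility of $A$ is exactly the statement that $\{A\}'$ has no nontrivial idempotent, and $\bigvee(\{T\}')$ records the similarity classes of direct summands appearing in finite amplifications of $T$, together with their multiplicities.

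For the direction $(1)\Rightarrow(2)$ I would first reduce by similarity to $T=\bigoplus_{i=1}^k A_i^{(n_i)}$, so that $\{T\}'$ is the algebra of block matrices $(X_{ij})$ with $X_{ij}$ intertwining $A_j^{(n_j)}$ and $A_i^{(n_i)}$. The key structural input is that, because each $A_i$ is strongly irreducible and $A_i\not\sim A_j$ for $i\neq j$, the commutant $\{A_i\}'$ is local with $\{A_i\}'/\mathrm{rad}\,\{A_i\}'\cong\mathbb{C}$, while every intertwiner between non-similar blocks lies in a two-sided ideal $\mathcal{I}$ contained in the Jacobson radical of $\{T\}'$. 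Consequently $\{T\}'/\mathcal{I}\cong\bigoplus_{i=1}^k M_{n_i}(\mathbb{C})$, and since idempotents and their algebraic equivalence lift uniquely modulo a radical ideal, one obtains
\[
\bigvee(\{T\}')\cong\bigvee\Big(\bigoplus_{i=1}^k M_{n_i}(\mathbb{C})\Big)\cong\bigoplus_{i=1}^k\mathbb{N}\cong\mathbb{N}^{(k)},
\]
with the class $[I]$ corresponding to $(n_1,\dots,n_k)=\sum_i n_i e_i$.

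For $(2)\Rightarrow(1)$ I would represent the free generators $e_i$ of $\mathbb{N}^{(k)}$ by minimal idempotents. Writing $[I]=\sum_i n_i e_i$, I lift this to a resolution of the identity $I=\sum_{i=1}^k\sum_{l=1}^{n_i}p_{i,l}$ into, for each $i$, a family of $n_i$ mutually $\sim_{a}$-equivalent idempotents, manufacturing the lifts by holomorphic (Riesz) functional calculus in the absence of a continuous functional calculus. This yields $T\sim\bigoplus_{i=1}^k A_i^{(n_i)}$ with $A_i=T|_{\mathrm{Ran}\,p_{i,1}}$. That each $e_i$ is an atom of the free semigroup forces $[p_{i,1}]$ to be minimal nonzero, hence $\{A_i\}'$ has no nontrivial idempotent and $A_i$ is strongly irreducible; and $e_i\neq e_j$ forces $A_i\not\sim A_j$. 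Uniqueness follows because the pair $(\bigvee(\{T\}'),[I])$ is a similarity invariant which, for a finite strongly irreducible decomposition, equals $(\mathbb{N}^{(k)},\sum_i n_i e_i)$; from it one reads off $k$, the multiplicities $n_i$, and the similarity classes of the $A_i$, so the decomposition is determined up to similarity and reordering.

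The hard part is the structural core underlying both directions: proving that the commutant of a strongly irreducible operator is local and that the ideal of cross intertwiners between non-similar strongly irreducible blocks behaves like a radical, so that passing to $\bigvee$ is insensitive to it. In the infinite-dimensional non-self-adjoint setting there is neither orthogonality nor a continuous functional calculus with which to produce and compare idempotents, so the lifting and cancellation must be carried out through holomorphic functional calculus together with a stable-rank argument. The most delicate point is ensuring that the semigroup condition $\bigvee(\{T\}')\cong\mathbb{N}^{(k)}$ genuinely forces a \emph{finite} strongly irreducible decomposition, ruling out infinite refinement of the identity into ever-smaller idempotents.
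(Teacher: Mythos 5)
First, a framing remark: the paper does not prove Theorem \ref{CFJ-K} at all --- it is imported verbatim from \cite{CFJ} (see also \cite{JW}) and used as a black box in the proof of the Main Theorem. So your proposal can only be judged on its own merits, and it has a genuine gap that is fatal to the direction $(1)\Rightarrow(2)$. Your ``key structural input'' --- that a strongly irreducible operator $A$ has local commutant with $\{A\}'/\mathrm{rad}\,\{A\}'\cong\mathbb{C}$, and that cross-intertwiners between non-similar strongly irreducible blocks generate an ideal inside the Jacobson radical of $\{T\}'$ --- is not just unproven (as you concede in your last paragraph); it is false, and this very paper contains the counterexample. The backward shift $S_\beta$ is strongly irreducible (it lies in $\mathbf{B}_1(\mathbb{D})$ and its commutant has no nontrivial idempotents since $\mathbb{D}$ is connected), and by Lemma \ref{Commutator-S} its commutant is isomorphic to $H^{\infty}_{\beta^{-1}}$: a commutative, infinite-dimensional Banach algebra which is semisimple, because the point evaluations $f\mapsto f(\lambda)$, $\lambda\in\mathbb{D}$, are characters with trivial common kernel, so $\mathrm{rad}\,H^{\infty}_{\beta^{-1}}=\{0\}$. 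Strong irreducibility only forbids nontrivial idempotents; it does not make the commutant local. Taking $T=S_\beta$ (so $k=1$, $n_1=1$, and there are no cross-intertwiners, i.e.\ $\mathcal{I}=\{0\}$), your claimed isomorphism $\{T\}'/\mathcal{I}\cong M_1(\mathbb{C})$ would force $H^{\infty}_{\beta^{-1}}\cong\mathbb{C}$, which is absurd. Relatedly, the statement your argument would yield for free --- $\bigvee(\{A\}')\cong\mathbb{N}$ for every strongly irreducible $A$ --- is not a formal consequence of ``no nontrivial idempotents'': even the single instance $\bigvee(H^{\infty}_{\beta})\cong\mathbb{N}$ is a nontrivial theorem (of Corona/projective-freeness type), which this paper must cite separately (Lemma 2.9 of \cite{CFJ}, Theorem 6.11 of \cite{JW}) in the proof of its Main Theorem.

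There is also a structural red flag pointing at the same defect: your proof of $(1)\Rightarrow(2)$ never invokes the uniqueness clause of (1). But uniqueness is essential, not decorative: mere existence of a finite strongly irreducible decomposition does not imply (2), since uniqueness and cancellation for such decompositions can fail in general --- characterizing exactly when they hold is the raison d'\^{e}tre of the theorem in \cite{CFJ}. Consequently any derivation of (2) from existence alone cannot be correct. The argument in \cite{CFJ} and \cite{JW} instead runs through the dictionary you correctly set up at the start (idempotents in $M_n(\{T\}')=\{T^{(n)}\}'$ correspond to similarity summands of $T^{(n)}$, and algebraic equivalence of idempotents to similarity of the summands), and then uses the uniqueness of the strongly irreducible decompositions of the amplifications $T^{(n)}$ to make the multiplicity map $[p]\mapsto(m_1,\ldots,m_k)\in\mathbb{N}^{(k)}$ well defined and injective; that is where $\bigvee(\{T\}')\cong\mathbb{N}^{(k)}$ with $[I]\mapsto\sum_i n_ie_i$ comes from. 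Your sketch of $(2)\Rightarrow(1)$ (atoms of $\mathbb{N}^{(k)}$ realized by a finite resolution of the identity into idempotents, minimality of a class forcing strong irreducibility of the corresponding summand, and the pair $(\bigvee(\{T\}'),[I])$ as the similarity invariant giving uniqueness) is essentially the standard route, though the lifting is done by algebraic manipulation of idempotents and algebraic equivalence rather than by Riesz functional calculus; but since the theorem is an equivalence, the broken half sinks the proposal.
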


\begin{corollary}[\cite{CFJ}, see also in \cite{JW}]\label{n1}
Let $T_1$ and $T_2$ be two strongly irreducible operators on a Hilbert space $\mathcal{H}$, and $T\sim T_1^{(n_1)}\oplus T_2^{(n_2)}$.
If $\bigvee (\{T\}')$ is isomorphic to the semigroup $\mathbb{N}$, then $T_1$ is similar to $T_2$.
\end{corollary}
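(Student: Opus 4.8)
The plan is to deduce this directly from Theorem \ref{CFJ-K} by recognizing $\mathbb{N}$ as $\mathbb{N}^{(1)}$, the free abelian semigroup on a single generator. First I would observe that the hypothesis $\bigvee(\{T\}')\cong\mathbb{N}$ is precisely condition (2) of Theorem \ref{CFJ-K} in the case $k=1$. The implication (2)$\Rightarrow$(1) then yields a single strongly irreducible operator $A$ and a positive integer $m$ with $T\sim A^{(m)}$, and the moreover clause guarantees that this strongly irreducible decomposition is unique up to similarity; that is, the collection of similarity classes of strongly irreducible direct summands of $T$ consists of the single class $[A]$.

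Next I would play the given decomposition against this uniqueness. Because $T_1$ and $T_2$ are strongly irreducible by hypothesis, the similarity $T\sim T_1^{(n_1)}\oplus T_2^{(n_2)}$ is itself a decomposition of $T$ into strongly irreducible summands. Uniqueness forces every such summand to lie in the class $[A]$, so that $T_1\sim A$ and $T_2\sim A$, whence $T_1\sim T_2$.

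Alternatively, one can argue by contradiction: if $T_1\not\sim T_2$, then $T_1^{(n_1)}\oplus T_2^{(n_2)}$ is exactly a decomposition of the shape in condition (1) with $k=2$ mutually non-similar strongly irreducible factors $A_1=T_1$ and $A_2=T_2$. The implication (1)$\Rightarrow$(2) would then give $\bigvee(\{T\}')\cong\mathbb{N}^{(2)}$, which is incompatible with the hypothesis, since $\mathbb{N}^{(2)}$ requires two free generators whereas $\mathbb{N}=\mathbb{N}^{(1)}$ has only one (their Grothendieck groups $\mathbb{Z}^2$ and $\mathbb{Z}$ are already non-isomorphic). This contradiction gives $T_1\sim T_2$.

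There is no genuine obstacle here: the corollary is a formal consequence of Theorem \ref{CFJ-K}. The only points deserving care are to identify the number $k$ of free generators of the semigroup with the number of distinct similarity classes of strongly irreducible factors—so that $k=1$ corresponds to $\mathbb{N}$—and to apply the uniqueness clause to the given, possibly non-reduced, decomposition $T_1^{(n_1)}\oplus T_2^{(n_2)}$ rather than presuming it is already in reduced form.
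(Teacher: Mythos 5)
Your proof is correct. The paper in fact gives no proof of this corollary at all---it is imported by citation from \cite{CFJ} and \cite{JW} as a direct consequence of Theorem \ref{CFJ-K}---and your deduction is exactly the intended one: either route you describe works, the cleaner being the contradiction argument, since $T_1\not\sim T_2$ would put $T$ in case (1) of Theorem \ref{CFJ-K} with $k=2$, forcing $\bigvee(\{T\}')\cong\mathbb{N}^{(2)}$, which is not isomorphic to $\mathbb{N}$ (their Grothendieck groups $\mathbb{Z}^{2}$ and $\mathbb{Z}$ already differ).
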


Following from the preliminaries in the previous subsections, we could prove Main Theorem now.

\begin{proof}[\textbf{Proof of Main Theorem}]
By Lemma \ref{hFB} and Lemma \ref{Bundledec}, there exists a finite Blaschke
product $B$ with order $m$ and an analytic function $h\in \textrm{Hol}(\overline{\mathbb{D}})$ inducing an indecomposable Hermitian holomorphic vector bundle $E_{h(S_{\beta})}$ such that
\[
E_{f(S_\beta)}\sim\bigoplus\limits_1^m E_{h(S_\beta)}.
\]

Now, suppose that there are two finite Blaschke products $B_i$ with order $m_i$ and analytic functions $h_i \in \textrm{Hol}(\overline{\mathbb{D}})$ inducing an indecomposable Hermitian holomorphic vector bundle $E_{h_i(S_{\beta})}$, $i=1,2$,  such that
\[
E_{f(S_\beta)}\sim\bigoplus\limits_1^{m_i} E_{h_i(S_\beta)}.
\]
Notice that $\bigoplus\limits_1^{m_1} E_{h_1(S_\beta)}$ and $\bigoplus\limits_1^{m_2} E_{h_2(S_\beta)}$ are the Hermitian holomorphic vector bundles induced by the Cowen-Douglas operators $\bigoplus\limits_1^{m_1} h_1(S_\beta)$ and $\bigoplus\limits_1^{m_2} h_2(S_\beta)$ on $\Omega$, respectively.
Then, by
\[
\bigoplus\limits_1^{m_1} E_{h_1(S_\beta)}(\Omega)\sim\bigoplus\limits_1^{m_2} E_{h_2(S_\beta)},
\]
we have
\[
\bigoplus\limits_1^{m_1} h_1(S_\beta)\sim\bigoplus\limits_1^{m_2} h_2(S_\beta).
\]
Let $T=h_1(S_\beta)^{(m_1)}\oplus h_2(S_\beta)^{(m_2)}$. Then,
\[
T\sim h_1(S_\beta)^{(2m_1)}.
\]
Since
\[
\{h_1(S_\beta)^{(2m_1)} \}_{H^2_{\beta}}' =\{M_F; F\in \textrm {M}_{2m_1}(H^{\infty}_{\beta})\},
\]
it follows from Lemma 2.9 in \cite{CFJ} or Theorem 6.11 in \cite{JW} that
\[
\bigvee(\{T \}_{H^2_{\beta}}')\cong \bigvee(\{M_{h_1}^{(2m_1)} \}_{H^2_{\beta}}') \cong \bigvee({\textrm M_{2m_1}}(H^{\infty}_{\beta}))\cong \bigvee (H^{\infty}_{\beta}) \cong\mathbb{N}.
\]
In addition, since $E_{h_1(S_\beta)}(\Omega)$ is indecomposable and
\[
\{h_1(S_\beta)\}'_{H^2_{\beta}}=\mathcal{L}_{inv}(E_{h_1(S_\beta)}),
\]
$h_1(S_\beta)$ is a strongly irreducible operator and so is $h_2(S_\beta)$.
By Corollary \ref{n1}, we have $h_1(S_\beta)\sim h_2(S_\beta)$ and consequently $E_{h_1(S_\beta)}\sim E_{h_2(S_\beta)}$. Thus, by Lemma \ref{Simforindecomp},  there exists a M\"{o}bius transformation $\varphi$ such that $h_2=h_1\circ \varphi$.

Furthermore, notice that for any $\omega\in\Omega$
\[
m_1\cdot\textrm{dim}E_{F_1(S_\beta)}(\omega)=\textrm{dim}E_{h_1(S_\beta)}(\omega)=\textrm{dim}E_{h_2(S_\beta)}(\omega)=m_2\cdot\textrm{dim}E_{F_2(S_\beta)}(\omega).
\]
Since $\textrm{dim}E_{F_1(S_\beta)}(\omega)=\textrm{dim}E_{F_2(S_\beta)}(\omega)$,
we have $m_1=m_2$.
\end{proof}

Furthermore, we could prove Theorem A and Theorem B.

\begin{proof}[\textbf{Proof of Theorem A}]

Let us prove the sufficiency first.  Suppose that there are two finite Blaschke products $B_1$ and $B_2$ with the same order $m$ and a function $h \in \textrm{Hol}(\overline{\mathbb{D}})$  such that
\[
h_1=h\circ B_1 \ \ \ \text{and} \ \ \ h_2=h\circ B_2,
\]
where $E_{h_1(S_\beta)}$ and $E_{h_2(S_\beta)}$ are indecomposable Hermitian holomorphic vector bundles. By Main Theorem (or Lemma \ref{Bundledec}), we have
\[
E_{h_1(S_\beta)}\sim\bigoplus\limits_1^m E_{h(S_\beta)}\sim E_{h_2(S_\beta)}.
\]

Now we will show the necessity. Suppose that the bundle $E_{h_1(S_\beta)}$ is  similar to the  bundle $E_{h_2(S_\beta)}$.
For $i=1,2$, write $h_i=g_i\circ B_i$, where $g_i\in \textrm{Hol}(\overline{\mathbb{D}})$ and $B_i$ is a finite Blaschke
product with order $m_i$ such that $\mathcal{L}_{inv}(E_{f_i(S_\beta)}) = \mathcal{L}_{inv}(E_{B_i(S_\beta)})$. By Main Theorem (or Lemma \ref{Bundledec}), we have
\[
E_{h_1(S_\beta)}\sim\bigoplus\limits_1^{m_1} E_{g_1(S_\beta)},\ \ \ \text{and} \ \ \ E_{h_2(S_\beta)}\sim\bigoplus\limits_1^{m_2} E_{g_2(S_\beta)},
\]
where both $E_{g_1(S_\beta)}$ and $E_{g_2(S_\beta)}$ are indecomposable. Following from the uniqueness in Main Theorem, we have $m_1=m_2$ and there is a M\"{o}bius transformation $\varphi\in \textrm{Aut}(\mathbb{D})$ such that $g_2=g_1\circ \varphi$. Consequently,
\[
h_2=g_2\circ B_2=(g_1\circ \varphi)\circ B_2=g_1\circ (\varphi\circ B_2).
\]
Notice that $\varphi\circ B_2$ is also a finite Blaschke product with the same order as $B_2$. This finishes the proof.
\end{proof}

\begin{proof}[\textbf{Proof of Theorem B}]
For $i=1,2$, it follows from the Main Theorem that,
there exists a unique positive integer $m_i$ and an analytic function $g_i\in \textrm{Hol}(\overline{\mathbb{D}})$ inducing an indecomposable vector bundle $E_{g_i(S_{\beta})}$, such that
\[
E_{h_i(S_\beta)}\sim\bigoplus\limits_1^{m_i} E_{g_i(S_\beta)}.
\]
Since the direct sums of the Hermitian holomorphic vector bundles $E_{h_1(S_{\beta})}\oplus E_{h_1(S_{\beta})}$ and  $E_{h_2(S_{\beta})}\oplus E_{h_2(S_{\beta})}$ are similar, we have
\[
\bigoplus\limits_1^{2m_1} E_{g_1(S_\beta)}\sim E_{h_1(S_{\beta})}\oplus E_{h_1(S_{\beta})}\sim E_{h_2(S_{\beta})}\oplus E_{h_2(S_{\beta})}\sim\bigoplus\limits_1^{2m_2} E_{g_2(S_\beta)}.
\]
Following from the uniqueness in Main Theorem, we have $2m_1=2m_2$ and there is a M\"{o}bius transformation $\varphi\in \textrm{Aut}(\mathbb{D})$ such that $g_2=g_1\circ \varphi$. Therefore,
\[
E_{h_1(S_{\beta})}\sim\bigoplus\limits_1^{m_1} E_{g_1(S_\beta)}\sim\bigoplus\limits_1^{m_2} E_{g_2(S_\beta)}\sim E_{h_2(S_{\beta})}.
\]
\end{proof}

\section{Some remarks and applications}

In this section, we give some remarks and applications. One hand, we will discuss the case of intermediate growth to show the necessity of the setting of polynomial growth condition. On the other hand, as applications, we answer the problem proposed by R. Douglas in 2007, and obtain the $K_0$-group of the commutant algebra of a multiplication operator on a weighted Hardy space of polynomial growth.

\begin{lemma}\label{cs-cs}
Let $H^2_{\beta}$ be the weighted Hardy space induced by a weight sequence $w=\{w_k\}_{k=1}^{\infty}$ with $w_k\rightarrow 1$. Let $\varphi\in \textrm{Aut}(\mathbb{D})$
Suppose that $X\in\mathcal{L}(H^2_{\beta})$ is a similarity deformation from the vector bundle $E_{S_\beta}(\mathbb{D})$ to the vector bundle $E_{\varphi(S_\beta)}(\mathbb{D})$. Then, the multiplication operators $M_z$ and $M_{\varphi}$ are similar on $H^2_{\beta^{-1}}$.
\end{lemma}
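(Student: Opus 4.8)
The plan is to convert the geometric hypothesis into an operator intertwining, transport it across the canonical isometry $T_{\beta,\beta^{-1}}\colon H^2_\beta\to H^2_{\beta^{-1}}$, and then pass to adjoints. First I would extract from $X$ the intertwining relation exactly as in the proof of Lemma \ref{localrig}: since $XE_{S_\beta}(\lambda)=E_{\varphi(S_\beta)}(\lambda)$, any $\mathbf{x}\in\textrm{Ker}(S_\beta-\lambda\mathbf{I})$ satisfies $XS_\beta\mathbf{x}=\lambda X\mathbf{x}=\varphi(S_\beta)X\mathbf{x}$; because $S_\beta\in\mathbf{B}_1(\mathbb{D})$, the eigenspaces $\{\textrm{Ker}(S_\beta-\lambda\mathbf{I}):\lambda\in\mathbb{D}\}$ span a dense subspace, so the bounded operators $XS_\beta$ and $\varphi(S_\beta)X$ agree on a dense set, whence $XS_\beta=\varphi(S_\beta)X$ and $X$ realizes the similarity $S_\beta\sim\varphi(S_\beta)$ on $H^2_\beta$.

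Next I would conjugate this relation by $T_{\beta,\beta^{-1}}$, whose inverse is $T_{\beta^{-1},\beta}$. Being an isometric isomorphism of Hilbert spaces it is unitary, so $Y\mapsto T_{\beta,\beta^{-1}}YT_{\beta^{-1},\beta}$ is a $*$-isomorphism of $\mathcal{L}(H^2_\beta)$ onto $\mathcal{L}(H^2_{\beta^{-1}})$. By Lemma \ref{Commutator-S} it sends $S_\beta$ to $M_z^*$, and since it is a continuous algebra homomorphism it sends $\varphi(S_\beta)=\sum_{k\ge0}\widehat{\varphi}(k)\,S_\beta^k$ to $\varphi(M_z^*)$. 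A direct computation on the monomials $z^n$ (already used in the proof of Lemma \ref{hFBS}) identifies this with the adjoint of a multiplication operator. Writing $\widetilde X=T_{\beta,\beta^{-1}}XT_{\beta^{-1},\beta}$, which is bounded and invertible, the relation $XS_\beta X^{-1}=\varphi(S_\beta)$ becomes
\[
\widetilde X\,M_z^*\,\widetilde X^{-1}=M_{\varphi^*}^*,\qquad \varphi^*(z)=\overline{\varphi(\bar z)},
\]
so that taking adjoints (with $\widetilde X^*$ again bounded invertible) yields $M_z\sim M_{\varphi^*}$ on $H^2_{\beta^{-1}}$.

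The step deserving the most care, and the main obstacle, is precisely this $*$-bookkeeping: the transfer identity naturally produces $\varphi^*$ rather than $\varphi$, and $\varphi^*\neq\varphi$ for a general M\"obius transformation. I would reconcile the two with the antiunitary coefficient-flip $J\colon f\mapsto f^*$ on $H^2_{\beta^{-1}}$, which is an isometric conjugation satisfying $JM_gJ=M_{g^*}$ for every multiplier $g$. Since $JM_zJ=M_z$ and $JM_{\varphi^*}J=M_\varphi$, if $Z$ is a bounded invertible operator implementing $M_z\sim M_{\varphi^*}$, then the bounded invertible linear operator $JZJ$ satisfies $(JZJ)M_z(JZJ)^{-1}=M_\varphi$, which establishes $M_z\sim M_\varphi$ on $H^2_{\beta^{-1}}$ as claimed. (Alternatively, reading the conjugation identity in the form recorded in Lemma \ref{hFBS} gives $M_\varphi$ at once, the flip being the device that makes that normalization rigorous.)
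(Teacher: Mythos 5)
Your proposal is correct and follows essentially the same route as the paper's own proof: extract the intertwining $XS_\beta=\varphi(S_\beta)X$ from the fibrewise equality via density of the eigenspaces, conjugate by the unitary $T_{\beta,\beta^{-1}}$ using Lemma \ref{Commutator-S}, and pass to adjoints to obtain the similarity of multiplication operators on $H^2_{\beta^{-1}}$. The one place you go beyond the paper is the $*$-bookkeeping: the paper simply asserts $(T_{\beta,\beta^{-1}}\varphi(S_\beta)T_{\beta^{-1},\beta})^*=M_{\varphi}$, whereas this adjoint is literally $M_{\varphi^*}$, so your explicit repair via the coefficient-conjugation $J$ (note also that in the paper's sole application, Theorem \ref{nosim}, $\varphi$ has real Taylor coefficients so $\varphi^*=\varphi$) is a legitimate tightening of the same argument rather than a different approach.
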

\begin{proof}
Since $S_{\beta}\in\mathbf{B}_1(\mathbb{D})$, we have $XS_{\beta}=\varphi(S_\beta)X$ and consequently
\[
(T_{\beta, \beta^{-1}}XT_{\beta^{-1},\beta})(T_{\beta, \beta^{-1}}S_{\beta}T_{\beta^{-1},\beta})=(T_{\beta, \beta^{-1}}\varphi(S_\beta)T_{\beta^{-1},\beta})(T_{\beta,\beta^{-1}}X T_{\beta^{-1},\beta}).
\]
Following from Lemma \ref{Commutator-S}, one can see that
\[
(T_{\beta, \beta^{-1}}S_{\beta}T_{\beta^{-1},\beta})^*=M_z \ \ \ \text{and} \ \ \ (T_{\beta, \beta^{-1}}\varphi(S_\beta)T_{\beta^{-1},\beta})^*=M_{\varphi}.
\]
Then, for any $n=0,1,\cdots$,
\[
M_z^n(T_{\beta, \beta^{-1}}XT_{\beta^{-1},\beta})^*=(T_{\beta,\beta^{-1}}X T_{\beta^{-1},\beta})^*M_{\varphi}^n.
\]
Since $(T_{\beta, \beta^{-1}}XT_{\beta^{-1},\beta})^*$ is a bounded invertible linear operator, the multiplication operators $M_z$ and $M_{\varphi}$ are similar on $H^2_{\beta^{-1}}$.
\end{proof}

\begin{lemma}\label{CorA}
Suppose that $H^2_{\beta}$ is the weighted Hardy space induced by a weight sequence $w=\{w_k\}_{k=1}^{\infty}$ with $w_k\rightarrow 1$. Let $\varphi(z)=\frac{z_0-z}{1-\overline{z_0}z}$, $z_0\in\mathbb{D}$, be a M\"{o}bius transformation on $\mathbb{D}$. Denote $\mathfrak{F}=\{\frac{\varphi^n(z)}{1-\overline{z_0}z}\}_{n=0}^{\infty}$, $\mathfrak{F}_{\beta}=\{\frac{1}{1-\overline{z_0}z}\frac{\varphi^n(z)}{\beta_n}\}_{n=0}^{\infty}$ and $\mathfrak{F}_{\beta^{-1}}=\{\frac{\beta_n\varphi^n(z)}{1-\overline{z_0}z}\}_{n=0}^{\infty}$.
Then, $\mathfrak{F}_{\beta}$ is a  Riesz base of $H^2_{\beta}$
if and only if $\mathfrak{F}_{\beta^{-1}}$ is a Riesz base of $H^2_{\beta^{-1}}$.
\end{lemma}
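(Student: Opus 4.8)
The plan is to show that each of the two Riesz base properties is equivalent to one and the same symmetric condition, namely that the Gram matrices $\textrm{Gram}_{H^2_{\beta}}(\mathfrak{F}_{\beta})$ and $\textrm{Gram}_{H^2_{\beta^{-1}}}(\mathfrak{F}_{\beta^{-1}})$ are simultaneously bounded. The entire argument is the single-M\"{o}bius ($m=1$) specialization of the machinery already assembled for Lemma \ref{dualB} and the Key Lemma, but run without the polynomial growth hypothesis, so that only the duality between $H^2_{\beta}$ and $H^2_{\beta^{-1}}$ is used. I would first record the structural observation that the substitution $\beta\leftrightarrow\beta^{-1}$ is an involution on the admissible weights (the ratios $w_k$ become $1/w_k$, which still tend to $1$) and interchanges the pair $(\mathfrak{F}_{\beta},H^2_{\beta})$ with $(\mathfrak{F}_{\beta^{-1}},H^2_{\beta^{-1}})$; hence any statement proved for one pair transfers automatically to the other.

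Next I would verify that $\mathfrak{F}_{\beta}$ is total and topologically free in $H^2_{\beta}$ (and, by the symmetry above, that $\mathfrak{F}_{\beta^{-1}}$ is total and topologically free in $H^2_{\beta^{-1}}$). This is the $m=1$ reading of Lemma \ref{dualB}: the $H^2$ computation there specializes to $\langle \varphi^n/(1-\overline{z_0}z),\,\varphi^k/(1-\overline{z_0}z)\rangle_{H^2}=\delta_{nk}/(1-|z_0|^2)$, so the matrix $A$ of Lemma \ref{mmatrix} collapses to the positive scalar $1/(1-|z_0|^2)$; this produces an explicit dual system, hence topological freeness, while totality follows because $\mathfrak{F}$ is precisely the coefficient set of the holomorphic cross-section frame $\gamma(\omega)=\sum_k \varphi^k(z)/(1-\overline{z_0}z)\,\omega^k$ of the push-forwards bundle $E_{\varphi(S_{\beta})}$ of Proposition \ref{PFHHVB}, whose closed linear span is all of $H^2_{\beta}$.

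The central step is the Gram-matrix identity linking the dual system of $\mathfrak{F}_{\beta}$ to $\mathfrak{F}_{\beta^{-1}}$. Writing $X_{\mathfrak{F}}$ for the matrix whose $n$-th column is the Taylor coefficient vector of $\varphi^n/(1-\overline{z_0}z)$, the computation in Step 2 of the Key Lemma specializes, with the scalar $A=1/(1-|z_0|^2)$ in place of the block-diagonal $D_{A^{-1}}$, to $A_{\mathfrak{E}_{\beta},\mathfrak{Y}}=(1-|z_0|^2)\,D_{\beta^{-1}}X_{\mathfrak{F}}D_{\beta}$ for the unique dual system $\mathfrak{Y}$ of $\mathfrak{F}_{\beta}$, whence
\[
\textrm{Gram}_{H^2_{\beta}}(\mathfrak{Y})=(1-|z_0|^2)^2\,\textrm{Gram}_{H^2_{\beta^{-1}}}(\mathfrak{F}_{\beta^{-1}}).
\]
In particular $\textrm{Gram}_{H^2_{\beta}}(\mathfrak{Y})$ is bounded if and only if $\textrm{Gram}_{H^2_{\beta^{-1}}}(\mathfrak{F}_{\beta^{-1}})$ is bounded. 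Since $\mathfrak{F}_{\beta}$ is total and topologically free, Lemma \ref{dual} (in its total form) gives that $\textrm{Gram}_{H^2_{\beta}}(\mathfrak{Y})$ is bounded exactly when $\textrm{Gram}_{H^2_{\beta}}(\mathfrak{F}_{\beta})$ is bounded below. Combining the two, boundedness of $\textrm{Gram}_{H^2_{\beta^{-1}}}(\mathfrak{F}_{\beta^{-1}})$ is equivalent to lower-boundedness of $\textrm{Gram}_{H^2_{\beta}}(\mathfrak{F}_{\beta})$; applying the same chain after the involution $\beta\leftrightarrow\beta^{-1}$ shows that boundedness of $\textrm{Gram}_{H^2_{\beta}}(\mathfrak{F}_{\beta})$ is equivalent to lower-boundedness of $\textrm{Gram}_{H^2_{\beta^{-1}}}(\mathfrak{F}_{\beta^{-1}})$.

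Finally I would assemble these equivalences through Lemma \ref{Grambase}(4). Using totality, $\mathfrak{F}_{\beta}$ is a Riesz base if and only if $\textrm{Gram}_{H^2_{\beta}}(\mathfrak{F}_{\beta})$ is both bounded and bounded below, which by the two equivalences just obtained is the same as requiring both $\textrm{Gram}_{H^2_{\beta}}(\mathfrak{F}_{\beta})$ and $\textrm{Gram}_{H^2_{\beta^{-1}}}(\mathfrak{F}_{\beta^{-1}})$ to be bounded; this symmetric condition is in turn equivalent, again by Lemma \ref{Grambase}(4), to $\mathfrak{F}_{\beta^{-1}}$ being a Riesz base, which yields the claim. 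I expect the main obstacle to be the bookkeeping in the Gram-matrix identity: one must check that with $m=1$ the matrix $A$ genuinely degenerates to a positive scalar, so that $D_{A^{-1}}$ is a scalar multiple of the identity and factors cleanly through the adjoint, and one must confirm that the $\mathfrak{Y}$ produced is the \emph{unique} dual system guaranteed by totality, so that the converse half of Lemma \ref{dual} is legitimately available. Everything else is a verbatim $m=1$ transcription of the totality and topological-freeness argument of Lemma \ref{dualB} together with the diagonal-conjugation computation of the Key Lemma.
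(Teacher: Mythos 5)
Your proof is correct, but it takes a noticeably heavier route than the paper's. The paper's own argument is a direct coefficient-matrix computation: since $\{\varphi^n(z)/(1-\overline{z_0}z)\}_{n\ge 0}$ is orthogonal in the classical Hardy space $H^2$, one has $X^*_{\mathfrak{F}}X_{\mathfrak{F}}=c\,\mathbf{I}$ with $c>0$, hence
\[
A^*_{\mathfrak{E}_{\beta^{-1}},\mathfrak{F}_{\beta^{-1}}}A_{\mathfrak{E}_{\beta},\mathfrak{F}_{\beta}}
=\bigl(D_{\beta}X^*_{\mathfrak{F}}D^{-1}_{\beta}\bigr)\bigl(D_{\beta}X_{\mathfrak{F}}D^{-1}_{\beta}\bigr)=c\,\mathbf{I},
\]
so each coefficient matrix is, up to a positive scalar and an adjoint, the inverse of the other; bounded invertibility of one is therefore equivalent to bounded invertibility of the other, and the characterization ``$A_{\mathfrak{E},\mathfrak{X}}$ bounded invertible iff $\mathfrak{X}$ is a Riesz base'' yields both implications at once. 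Your central identity $\textrm{Gram}_{H^2_{\beta}}(\mathfrak{Y})=(1-|z_0|^2)^2\,\textrm{Gram}_{H^2_{\beta^{-1}}}(\mathfrak{F}_{\beta^{-1}})$ is this same computation in Gram form, but you then route it through the biorthogonal machinery: totality and topological freeness of both sequences, uniqueness of the dual system, Lemma \ref{dual}, and Lemma \ref{Grambase}(4). This is all legitimate --- and it is literally the $m=1$ pattern of Lemma \ref{dualB} together with Step 2 of the Key Lemma, so it would generalize verbatim to frames built from Blaschke products with several zeros --- but it requires two inputs the paper's proof never touches: totality must be established separately for $\mathfrak{F}_{\beta}$ and $\mathfrak{F}_{\beta^{-1}}$, and, more delicately, you need the converse (``minimal dual system'') half of Lemma \ref{dual}, available here only through the paper's remark following that lemma, whereas the paper's direct argument avoids dual systems entirely. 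Two cosmetic points, neither fatal: with your (correct) scalar $A=1/(1-|z_0|^2)$, the paper's displayed ``$\textrm{Gram}_{H^2}(\mathfrak{F})=(1-|z_0|^2)\mathbf{I}$'' has the constant inverted, which is harmless since only positivity matters; and $\gamma(\omega)=\sum_k\varphi^k(z)/(1-\overline{z_0}z)\,\omega^k$ is actually a cross-section of $E_{\varphi^*(S_{\beta})}$ rather than of $E_{\varphi(S_{\beta})}$ (one checks $\varphi^*(S_{\beta})\gamma(\omega)=\omega\gamma(\omega)$ via Lemma \ref{leftin}), a conjugation slip the paper itself makes in Lemma \ref{dualB}; since $\varphi^*$ is again a M\"{o}bius transformation, your totality argument is unaffected.
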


\begin{proof}
Assume that $\mathfrak{F}_{\beta}=\{\frac{1}{1-\overline{z_0}z}\frac{\varphi^n(z)}{\beta_n}\}_{n=0}^{\infty}$ is a Riesz base of $H^2_{\beta}$. Then, $\mathfrak{F}_{\beta}$ is a total sequence in $H^2_{\beta}$ and the infinite dimensional matrix $A_{\mathfrak{E}_{\beta},\mathfrak{F}_{\beta}}=D_{\beta}X_{\mathfrak{F}}D^{-1}_{\beta}$ is a bounded invertible linear operator on $\ell^2$, where the $n$-th column of $X_{\mathfrak{F}}$ is the Taylor coefficients of $\frac{\varphi^n(z)}{1-\overline{z_{0}}z}$. In addition, $A_{\mathfrak{E}_{\beta},\mathfrak{F}_{\beta^{-1}}}=D^{-1}_{\beta}X_{\mathfrak{F}}D_{\beta}$, where $\mathfrak{F}_{\beta^{-1}}=\{\frac{\beta_n\varphi^n(z)}{1-\overline{z_0}z}\}_{n=0}^{\infty}$. Notice that $\{\frac{\sqrt{1-|z_0|^2}}{1-\overline{z_0}z}\varphi^n(z)\}_{n=0}^{\infty}$ is an orthonormal base in the classical Hardy space $H^2$, i.e., $\textrm{Gram}_{H^2}(\mathfrak{F})=(1-|z_0|^2)\mathbf{I}$. Then,
\begin{align*}
A^*_{\mathfrak{E}_{\beta},\mathfrak{F}_{\beta^{-1}}}A_{\mathfrak{E}_{\beta},\mathfrak{F}_{\beta}}
=&(D_{\beta}X^*_{\mathfrak{F}}D^{-1}_{\beta})(D_{\beta}X_{\mathfrak{F}}D^{-1}_{\beta}) \\
=& D_{\beta}X^*_{\mathfrak{F}}X_{\mathfrak{F}}D^{-1}_{\beta} \\
=& D_{\beta}(1-|z_0|^2)\mathbf{I}D^{-1}_{\beta} \\
=& (1-|z_0|^2)\mathbf{I}.
\end{align*}
Consequently, we have that $A^*_{\mathfrak{E}_{\beta},\mathfrak{F}_{\beta^{-1}}}$ is bounded invertible and so is
$A_{\mathfrak{E}_{\beta},\mathfrak{F}_{\beta^{-1}}}$.
Applying Lemma \ref{Grambase} again, we obtain that $\mathfrak{F}_{\beta^{-1}}$ is a Riesz base of $H^2_{\beta^{-1}}$. The converse could be proven in a similar way.
\end{proof}

Recall that if the multiplication operator $M_z:H^2_{\beta}\rightarrow H^2_{\beta}$ is strictly cyclic, then $H^{\infty}_{\beta}=H^2_{\beta}$.

\begin{lemma}\label{similaroperator}
Let $H^2_{\beta}$ be the weighted Hardy space induced by a weight sequence $w=\{w_k\}_{k=1}^{\infty}$ with $w_k\rightarrow 1$, and let $\varphi(z)=\frac{z_0-z}{1-\overline{z_0}z}$, $z_0\in\mathbb{D}$, be a M\"{o}bius transformation on $\mathbb{D}$. Denote $\mathfrak{F}=\{\frac{\varphi^n(z)}{1-\overline{z_0}z}\}_{n=0}^{\infty}$ and $\mathfrak{F}_{\beta}=\{\frac{1}{1-\overline{z_0}z}\frac{\varphi^n(z)}{\beta_n}\}_{n=0}^{\infty}$. Suppose that the multiplication operator $M_z:H^2_{\beta}\rightarrow H^2_{\beta}$ is strictly cyclic. If $M_z$ is similar to $M_{\varphi}$, then
$\mathfrak{F}_{\beta}$ is a Riesz base of $H^2_{\beta}$.
\end{lemma}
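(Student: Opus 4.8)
The plan is to exploit the similarity $M_z\sim M_\varphi$ to produce a Riesz base that is close to $\mathfrak{F}_\beta$, and then to correct it by an invertible multiplier. First I would fix a bounded invertible $V\in\mathcal{L}(H^2_\beta)$ with $VM_z=M_\varphi V$ and set $u=V(\mathbf 1)$, where $\mathbf 1$ is the constant function. Iterating the intertwining relation gives $VM_z^n=M_\varphi^n V$, so applying both sides to $\mathbf 1$ yields
\[
V(z^n)=M_\varphi^n u=\varphi^n u,\qquad n=0,1,2,\ldots.
\]
Since $\mathfrak{E}_\beta=\{z^n/\beta_n\}_{n=0}^\infty$ is an orthonormal base of $H^2_\beta$ and $V$ is bounded invertible, the sequence $\{\,\varphi^n u/\beta_n\,\}_{n=0}^\infty$ is a Riesz base of $H^2_\beta$. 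Comparing with $\mathfrak{F}_\beta=\{\frac{1}{1-\overline{z_0}z}\frac{\varphi^n}{\beta_n}\}_{n=0}^\infty$, the two sequences differ only by the scalar function $\frac{1}{(1-\overline{z_0}z)u}$; hence it suffices to prove that multiplication by this function is a bounded invertible operator on $H^2_\beta$, for then $\mathfrak{F}_\beta$ is the image of a Riesz base under a bounded invertible operator.

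Second, I would establish that $M_u$ is bounded and invertible. Boundedness is immediate from strict cyclicity: $u\in H^2_\beta=H^\infty_\beta$, so $u$ is a multiplier. For invertibility I would use the Banach algebra structure of the strictly cyclic case. Since polynomials are dense in $H^2_\beta$ and $H^\infty_\beta=H^2_\beta$ is generated as a Banach algebra by $M_z$, it is a commutative unital Banach algebra whose maximal ideal space is $\sigma(M_z)=\overline{\mathbb{D}}$, with Gelfand transform the (continuous) evaluation on $\overline{\mathbb{D}}$; in particular every point evaluation $E_\lambda$, $\lambda\in\overline{\mathbb{D}}$, is bounded, and a multiplier is invertible in $H^\infty_\beta$ if and only if it has no zero on $\overline{\mathbb{D}}$. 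I would then show $u$ has no zero on $\overline{\mathbb{D}}$: if $u(\lambda)=0$ for some $\lambda\in\overline{\mathbb{D}}$, then $V(z^n)(\lambda)=\varphi^n(\lambda)u(\lambda)=0$ for every $n$, whence $V(g)(\lambda)=0$ for all $g\in H^2_\beta$ by density of polynomials together with continuity of $V$ and of $E_\lambda$; this forces $\mathrm{Range}(V)\subseteq\ker E_\lambda\subsetneq H^2_\beta$, contradicting the surjectivity of $V$. Therefore $u$ is nonvanishing on $\overline{\mathbb{D}}$, so $u^{-1}\in H^\infty_\beta$ and $M_u$ is bounded invertible; since $1-\overline{z_0}z$ is a polynomial nonvanishing on $\overline{\mathbb{D}}$, the same conclusion holds for $(1-\overline{z_0}z)u$.

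Finally, I would assemble the operator
\[
W=M_{\frac{1}{1-\overline{z_0}z}}\,M_u^{-1}\,V,
\]
a composition of bounded invertible operators, which satisfies $W(z^n)=\frac{\varphi^n}{1-\overline{z_0}z}$ and hence $W(z^n/\beta_n)=\frac{1}{1-\overline{z_0}z}\frac{\varphi^n}{\beta_n}$. Thus $W$ carries the orthonormal base $\mathfrak{E}_\beta$ onto $\mathfrak{F}_\beta$, and by the characterization of Riesz bases (a sequence is a Riesz base precisely when $A_{\mathfrak{E}_\beta,\mathfrak{F}_\beta}$ is bounded invertible, equivalently when it is the image of an orthonormal base under a bounded invertible operator) we conclude that $\mathfrak{F}_\beta$ is a Riesz base of $H^2_\beta$. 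The main obstacle is the invertibility of $M_u$: it is exactly here that strict cyclicity is indispensable, both to guarantee that $u=V(\mathbf 1)$ is a genuine multiplier and to identify the maximal ideal space as $\overline{\mathbb{D}}$, so that nonvanishing of $u$ on $\overline{\mathbb{D}}$ upgrades to invertibility of $u^{-1}$ as a multiplier; the delicate point within this is controlling the boundary behaviour of $u$, which is where the bounded boundary evaluations afforded by strict cyclicity enter.
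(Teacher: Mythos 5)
Your proof is correct, and it shares the paper's skeleton: set $u=V(\mathbf{1})$, observe $V(z^n)=\varphi^n u$, reduce everything to invertibility of the multiplier $u$, and finish by composing $V$ with $M_{1/((1-\overline{z_0}z)u)}$ to carry the orthonormal base $\{z^n/\beta_n\}$ onto $\mathfrak{F}_\beta$. Where you genuinely diverge is the one hard step, the invertibility of $M_u$. The paper exploits the fact that this particular $\varphi$ is an involution ($\varphi\circ\varphi=\mathrm{id}$): writing $T(g)=f\cdot(g\circ\varphi)$ with $f=T(\mathbf{1})$, it computes $T^2=M_{f\cdot(f\circ\varphi)}$, concludes that $f\cdot(f\circ\varphi)$ is invertible in $H^{\infty}_{\beta}=H^2_{\beta}$ because $T^2$ is an invertible operator commuting with $M_z$ (strict cyclicity makes the commutant consist of multipliers), and then extracts $1/f=T\bigl(1/f^2(\varphi(z))\bigr)\in H^2_{\beta}$ by an algebraic manipulation. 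You instead run a Gelfand-theoretic argument on the strictly cyclic commutant algebra: if a character $\chi$ kills $u$, then $\chi(V(z^n))=\chi(\varphi)^n\chi(u)=0$ for all $n$, so $\chi\circ V\equiv 0$ by density of polynomials and norm-continuity of $\chi$, contradicting surjectivity of $V$. Your route buys generality: it never uses $\varphi\circ\varphi=\mathrm{id}$, so it applies verbatim to any automorphism (indeed any multiplier symbol) in place of the self-inverse M\"{o}bius map, whereas the paper's $T^2$ trick is tied to involutions. The paper's route buys elementarity: no maximal ideal space, no characters. One caveat on your write-up: for $|\lambda|=1$ the expression $u(\lambda)$ must be read as the value of the character $\chi_\lambda$ extending polynomial evaluation (a Gelfand transform value), not as a literal boundary value of the analytic function $u$; your argument is insensitive to this, since it only uses linearity, multiplicativity and continuity of the functional, and in fact you never really need the identification of the maximal ideal space with $\overline{\mathbb{D}}$ (which rests on inverse-closedness of the commutant), because ruling out vanishing at every character is exactly what Gelfand invertibility requires.
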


\begin{proof}
Suppose that $T:H^2_{\beta}\rightarrow H^2_{\beta}$ is a bounded invertible linear operator such that $TM_z=M_{\varphi}T$. Write $f(z)=T(\mathbf{1})$. Then for any $n=0,1,\cdots$,
\[
T(z^n)=f(z)\varphi^n(z).
\]
Moreover, according to $\varphi\circ\varphi(z)=z$, one can see that
\[
T^2(z^n)=T(f(z)\varphi^n(z))=f(z)f(\varphi(z))z^n,
\]
and consequently $T^2$ is just the multiplication operator $M_{f(z)f(\varphi(z))}$.
Since $T^2$ is a bounded invertible operator, the function $f(z)f(\varphi(z))$ is an invertible element in $H^{\infty}_{\beta}=H^2_{\beta}$. Then, by $f(z)\in H^2_{\beta}=H^{\infty}_{\beta}$, we have
\[
\frac{1}{f^2(\varphi(z))}=\frac{f^2(z)}{\left(f(z)f(\varphi(z))\right)^2}\in H^2_{\beta}.
\]
Furthermore,
\[
\frac{1}{f(z)}=T\left(\frac{1}{f^2(\varphi(z))}\right)\in  H^2_{\beta},
\]
that implies $f(z)$ is also an invertible element in $H^{\infty}_{\beta}$. The bounded invertible linear operator $T$ maps the orthonormal base $\{\frac{z^n}{\beta_n}\}_{n=0}^{\infty}$ to the Riesz base $\{\frac{f(z)\varphi^n(z)}{\beta_n}\}_{n=0}^{\infty}$, and then the bounded invertible linear operator $M_{\frac{1}{(1-\overline{z_0}z)f(z)}}$ maps the Riesz base $\{\frac{f(z)\varphi^n(z)}{\beta_n}\}_{n=0}^{\infty}$ to the Riesz base $\{\frac{1}{1-\overline{z_0}z}\frac{\varphi^n(z)}{\beta_n}\}_{n=0}^{\infty}$. Therefore, $\mathfrak{F}_{\beta}$ is a Riesz base of $H^2_{\beta}$.
\end{proof}

Now, let us review the weighted Bergman space. Let $dA$ denote the Lebesgue area measure on the unit open disk $\mathbb{D}$, normalized so that the measure of $\mathbb{D}$ equals $1$. For $\alpha\geq0$, the weighted Bergman space $A_{\alpha}^2$ is the space of analytic functions on $\mathbb{D}$ which are square-integrable with respect to the measure $dA_{\alpha}(z)=(\alpha+1)(1-|z|^2)^{\alpha}dA(z)$. As well known, it could be seemed as a weighted Hardy space $H^2_{\beta^{(\alpha)}}$, where $\beta^{(\alpha)}_0=1$ and for $k=1,2,\ldots$, $\beta_k^{(\alpha)}=\prod\limits_{j=1}^{k}w_{j}^{(\alpha)}$ with $w_{j}^{(\alpha)}=\sqrt{\frac{j+1}{j+2\alpha+1}}$. Since
$$
\lim\limits_{j\rightarrow\infty}(j+1)|w^{(\alpha)}_j-1|=\lim\limits_{j\rightarrow\infty}(j+1)\cdot\frac{1-\frac{j+1}{j+2\alpha+1}}{1+\sqrt{\frac{j+1}{j+2\alpha+1}}}=\alpha,
$$
every weighted Bergman space $A_{\alpha}^2$ satisfies the polynomial growth condition.

\begin{lemma}\label{unbounded}
Suppose that $H^2_{\beta}$ is the weighted Hardy space induced by a weight sequence $w=\{w_k\}_{k=1}^{\infty}$ with $w_k\rightarrow 1$.  Let $\varphi(z)=\frac{t-z}{1-tz}$, where $t\in (0,1)$, be a M\"{o}bius transformation on $\mathbb{D}$. Denote $\mathfrak{F}=\{\frac{\varphi^n(z)}{1-tz}\}_{n=0}^{\infty}$ and $\mathfrak{F}_{\beta}=\{\frac{1}{1-tz}\frac{\varphi^n(z)}{\beta_n}\}_{n=0}^{\infty}$.
If
\[
\lim\limits_{k\rightarrow\infty}(k+1)(w_k-1)=-\infty,
\]	
then $\mathfrak{F}_{\beta}$ is a Riesz base of $H^2_{\beta}$.
\end{lemma}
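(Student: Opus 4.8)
The plan is to recognize the sequence $\mathfrak{F}_{\beta}$ as the image of the orthonormal basis $\{z^n/\beta_n\}_{n=0}^{\infty}$ of $H^2_{\beta}$ under a single weighted composition operator, and then to exploit the fact that the underlying symbol is an involution. Concretely, set $V=M_{\frac{1}{1-tz}}C_{\varphi}$, where $C_{\varphi}f=f\circ\varphi$; then $V(z^n/\beta_n)=\frac{1}{1-tz}\frac{\varphi^n}{\beta_n}$, so that $\mathfrak{F}_{\beta}=\{V(z^n/\beta_n)\}_{n=0}^{\infty}$. By the operator characterization of a Riesz base (item $(5')$ of the definition), proving that $\mathfrak{F}_{\beta}$ is a Riesz base amounts exactly to proving that $V$ is a bounded invertible operator on $H^2_{\beta}$.

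The decisive simplification is that $V$ is, up to a positive scalar, an involution. Using $\varphi\circ\varphi=\mathrm{Id}_{\mathbb{D}}$ together with the elementary identity $1-t\varphi(z)=\frac{1-t^2}{1-tz}$, one computes directly that $V^2=\frac{1}{1-t^2}\mathbf{I}$. Consequently, as soon as $V$ is known to be bounded it is automatically invertible, with $V^{-1}=(1-t^2)V$; moreover its range is then all of $H^2_{\beta}$, so $\mathfrak{F}_{\beta}$ is automatically total. Thus, in contrast with the two-sided comparison needed in the Key Lemma, here the lower bound and the totality come for free, and the entire problem collapses to a single upper bound: the boundedness of $V$, equivalently of $C_{\varphi}$, on $H^2_{\beta}$.

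To prove boundedness I would first factor out the bounded invertible multiplier: since $\frac{1}{1-tz}$ and $1-tz$ both lie in $\mathrm{Hol}(\overline{\mathbb{D}})\subseteq H^{\infty}_{\beta}$, the operator $M_{\frac{1}{1-tz}}$ is bounded and invertible, so it suffices to bound $C_{\varphi}$, i.e.\ to show that $\{\varphi^n/\beta_n\}_{n=0}^{\infty}=\{C_{\varphi}(z^n/\beta_n)\}$ is a Bessel sequence, or equivalently by Lemma \ref{Grambase} that $\textrm{Gram}_{H^2_{\beta}}(\{\varphi^n/\beta_n\})$ is bounded. This is where the growth hypothesis enters. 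The assumption $\lim_{k\to\infty}(k+1)(w_k-1)=-\infty$ forces $w_k<1$ for all large $k$; after changing finitely many weights to $1$ (which replaces $H^2_{\beta}$ by an equivalent weighted Hardy space and hence preserves Gram boundedness) we may assume $w_k\le 1$ for every $k$, that is $\beta_{k+1}/\beta_k\le 1=\alpha_{k+1}/\alpha_k$, where $\alpha$ denotes the classical Hardy weights. Since $\varphi$ is a finite Blaschke product of order one and $\textrm{Gram}_{H^2}(\{\varphi^n\})$ is bounded (the same well-known classical fact used in the Key Lemma), Lemma \ref{Tricontroll} transfers this boundedness to $H^2_{\beta}$, yielding the required Bessel bound.

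Assembling the steps finishes the argument: $C_{\varphi}$, and hence $V$, is bounded, therefore invertible by the involution identity, therefore $\mathfrak{F}_{\beta}$ is a Riesz base of $H^2_{\beta}$. The only genuinely analytic step is the upper (Bessel) bound in the third paragraph, and I expect it to be the main obstacle; everything else is the algebra of the involution. I would take particular care that Lemma \ref{Tricontroll} demands the weight inequality for \emph{all} $k$, which is precisely why the reduction to $w_k\le 1$ through finitely many modifications and norm equivalence must be carried out explicitly before the comparison is invoked.
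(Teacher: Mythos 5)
The statement as printed is, in fact, a misprint, and this matters for evaluating your proof. The lemma's label is \emph{unbounded}, the paper's own proof of it ends with the conclusion that $\mathfrak{F}_{\beta}$ is \emph{not} a Riesz base of $H^2_{\beta}$, and Theorem \ref{nosim} invokes the lemma precisely to obtain a \emph{contradiction} after deducing that $\mathfrak{F}_{\beta^{-1}}$ \emph{is} a Riesz base. So the intended (and true) conclusion is the negation of what you set out to prove, and your argument must contain an actual error, not just a gap. Your involution step is correct algebra: $V=M_{\frac{1}{1-tz}}C_{\varphi}$ satisfies $V^{2}=\frac{1}{1-t^{2}}\mathbf{I}$, so everything does reduce to the boundedness of $C_{\varphi}$ on $H^2_{\beta}$. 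But that boundedness is exactly what the hypothesis $\lim_{k}(k+1)(w_k-1)=-\infty$ destroys: the paper's proof compares $H^2_{\beta}$ with the weighted Bergman spaces $A_{N}^{2}$, using the unitaries $f\mapsto(\varphi')^{N}\,(f\circ\varphi)$ and the inequality $\beta_k/\beta_n\geq\beta_k^{(N)}/\beta_n^{(N)}$ for $k\leq n$ and $n$ large, to show that $\|\varphi^{n}\|_{H^2_{\beta}}/\beta_n=\|C_{\varphi}(z^n)\|_{H^2_{\beta}}/\|z^n\|_{H^2_{\beta}}$ is unbounded in $n$. Hence $C_{\varphi}$ is unbounded, $\{\varphi^{n}/\beta_n\}_{n\ge0}$ is not even a Bessel sequence, and $\mathfrak{F}_{\beta}$ cannot be a Riesz base.

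The concrete flaw is your application of Lemma \ref{Tricontroll}. That lemma requires the Blaschke product to vanish at the origin ($z_1=0$), whereas $\varphi(0)=t\neq 0$, and this hypothesis is not cosmetic. When $B(0)=0$ one has $\widehat{B^{n}}(k)=0$ for all $k<n$, so the Gram entries $\langle B^{n}/\beta_n,\,B^{m}/\beta_m\rangle_{H^2_{\beta}}$ involve only ratios $\beta_k/\beta_n$ with $k\geq\max(n,m)$, and these are dominated by $\alpha_k/\alpha_n$ under $\alpha_{j+1}/\alpha_j\geq\beta_{j+1}/\beta_j$; this one-sided control is what Cowen's Hadamard-multiplication argument needs. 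For $\varphi$ the low-order Taylor coefficients do not vanish (for instance $\widehat{\varphi^{n}}(0)=t^{n}$), and for $k<n$ the comparison \emph{reverses}: $\beta_k/\beta_n=\prod_{j=k+1}^{n}w_j^{-1}\geq\alpha_k/\alpha_n$, and under your hypothesis these ratios grow faster than any polynomial in $n$. That low-frequency mass is exactly what the paper's lower bound exploits to force blow-up. This is also why Corollary \ref{autosimilar} never works with $\{\varphi^{n}/\beta_n\}$ directly but passes to the order-two Blaschke product $B(z)=z\varphi^{*}(z)$, which does vanish at the origin, before invoking the Key Lemma. Your preliminary reduction (altering finitely many weights to get $w_k\le 1$) is harmless, but no such reduction can create a zero of $\varphi$ at the origin, so the Bessel bound you need is unobtainable --- indeed false.
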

\begin{proof}
Firstly, we will show the following Claim.

{\bf Claim.} For any $N\in\mathbb{N}$, there exists $M_N>0$ such that for every $n\geq M_N$,
\[
\frac{\|\varphi^n(z)\|_{A_{N}^2}}{\|z^n\|_{A_{N}^2}}\geq \frac{(1+t)^{N}}{2(1-t)^{N-1}\sqrt{2\pi(2N-1)}}.
\]

Notice that
\begin{align*}
\|(\varphi')^{N}(z)\|^2_{H^2}
&=\frac{(1-t^2)^{2N}}{2\pi}\int_{0}^{2\pi}\frac{1}{(1-t{\textrm e}^{{\textbf i}\theta})^{2N}(1-t{\textrm e}^{-{\textbf i}\theta})^{2N}} d\theta \\
&=\frac{(1-t^2)^{2N}}{2\pi}\int_{0}^{2\pi}\frac{1}{(1+t^2-2t\cos\theta)^{2N}} d\theta \\
&\geq\frac{(1-t^2)^{2N}}{2\pi}\cdot 2\cdot \int_{0}^{\pi}\frac{\sin\theta}{(1+t^2-2t\cos\theta)^{2N}} d\theta \\
&=\frac{(1-t^2)^{2N}}{2\pi}\cdot2 \cdot\frac{1}{2t(2N-1)}\cdot(\frac{1}{(1-t)^{2(2N-1)}}-\frac{1}{(1+t)^{2(2N-1)}}) \\
&\geq\frac{(1-t^2)^{2N}}{2\pi}\cdot\frac{1}{t(2N-1)}\cdot\frac{t}{(1-t)^{2(2N-1)}} \\
&=\frac{(1+t)^{2N}}{2\pi(2N-1)(1-t)^{2N-2}}.
\end{align*}
Then, as $N\rightarrow+\infty$,
\[
\|(\varphi')^{N}(z)\|_{H^2} \geq \frac{(1+t)^{N}}{(1-t)^{N-1}\sqrt{2\pi(2N-1)}}\rightarrow +\infty.
\]

Following from \cite{KMis12}, one can see the unitary representation $D_{N}^+$ for the analytic automorphism group $\textrm{Aut}(\mathbb{D})$ on the weighted Bergman space $A_{N}^2$,
\[
D_{N}^+(g^{-1})(f)=(g')^{N}\cdot(f\circ g), \ \ f\in A_{N}^2, \ g\in \widetilde{\textrm{Aut}(\mathbb{D})},
\]
where $\widetilde{\textrm{Aut}(\mathbb{D})}$ is the universal covering group of $\textrm{Aut}(\mathbb{D})$.

Then, we have
\[
\|\varphi^n(z)\|_{A_{N}^2}=\|(\varphi'(z))^{N}\cdot(\varphi^n\circ \varphi(z))\|_{A_{N}^2}=\|z^n(\varphi'(z))^{N}\|_{A_{N}^2}.
\]
Denote $(\varphi'(z))^{N}=\sum_{k=0}^{\infty}c^{(N)}_kz^k$. There exists $K\in\mathbb{N}$ such that
\[
\sum\limits_{k=0}^{K}|c^{(N)}_k|^2\geq\frac{1}{2}\sum\limits_{k=0}^{\infty}|c^{(N)}_k|^2=\frac{1}{2}\|(\varphi')^{N}(z)\|^2_{H^2}.
\]
Since $w^{(N)}_k\rightarrow 1$, there exists $M_N>0$ such that for every $n\geq M_N$,
\[
\frac{\beta_{k+n}^{(N)}}{\beta_{n}^{(N)}}= \prod\limits_{i=n+1}^{n+k}w^{(N)}_{i} \geq \frac{\sqrt{2}}{2}, \ \ \ \text{for} \ k=1,2,\ldots,K.
\]
Then,
\[
\frac{\|\varphi^n(z)\|_{A_{N}^2}}{\|z^n\|_{A_{N}^2}}=\frac{\|z^n(\varphi'(z))^{N}\|_{A_{N}^2}}{\beta_n^{(N)}}
\geq \sqrt{\sum\limits_{k=0}^{K}|c^{(N)}_k|^2\left(\frac{\beta_{k+n}^{(N)}}{\beta_{n}^{(N)}}\right)^2}
\geq \frac{1}{2}\|(\varphi')^{N}\|_{H^2}.
\]
This completes the proof of the Claim.

Following from $\lim\limits_{k\rightarrow\infty}(k+1)(w_k-1)=-\infty$, we may assume all $w_k<1$ without loss of generality and we could obtain the followings.
\begin{enumerate}
\item [(I)]    For any $N\in \mathbb{N}$, there exists a positive integer $i_N$ such that for any $i\geq i_N$,
$w_i\leq w_i^{(N)}$.
\item [(II)]  Furthermore,
$\lim\limits_{k\rightarrow\infty} \beta_n/\beta_n^{(N)}=0$ and then
there exists a positive integer $n_N>i_N$ such that for any $n\geq n_N$, $\beta_n/\beta_n^{(N)}\leq \beta_{i_N}$.
\end{enumerate}

Consequently, for any $n\geq n_N$, the followings hold.
\begin{enumerate}
\item [(1)]
If $0\leq k\leq i_N$, we have
\[
\frac{\beta_{k}}{\beta_n}\geq \frac{\beta_{k}}{\beta_{i_N}\beta_{n}^{(N)}}\geq \frac{1}{\beta_{n}^{(N)}}  \geq\frac{\beta_k^{(N)}}{\beta_{n}^{(N)}}.
\]
\item [(2)] for every $i_N< k\leq n$, we have
\[
\frac{\beta_{k}}{\beta_n}= \prod\limits_{i=k+1}^{n}\frac{1}{w_{i}}\geq \prod\limits_{i=k+1}^{n}\frac{1}{w_i^{(N)}} =\frac{\beta_k^{(N)}}{\beta_{n}^{(N)}}.
\]
\end{enumerate}
Then, for any $n\geq n_N$ and any $0\leq k\leq n$, we have
\[
\frac{\beta_{k}}{\beta_n}\geq\frac{\beta_k^{(N)}}{\beta_{n}^{(N)}}.
\]
Therefore, for any $n\geq n_N$,
\begin{align*}
\frac{\|\varphi^{n}(z)\|^2_{H^2_{\beta}}}{\|z^{n}\|^2_{H^2_{\beta}}}&=\sum\limits_{k=0}^{\infty}|\widehat{\varphi^{n}}(k)|^2\cdot \frac{\beta_{k}^2}{\beta_n^2}  \\
&\geq  \sum\limits_{k=0}^{n}|\widehat{\varphi^{n}}(k)|^2\cdot \left(\frac{\beta_k^{(N)}}{\beta_{n}^{(N)}}\right)^2  \\
&\geq  \sum\limits_{k=0}^{\infty}|\widehat{\varphi^{n}}(k)|^2\cdot \left(\frac{\beta_k^{(N)}}{\beta_{n}^{(N)}}\right)^2 - \sum\limits_{k=n+1}^{\infty}|\widehat{\varphi^{n}}(k)|^2 \\
&\geq \frac{\|\varphi^{n}(z)\|^2_{A_{N}^2}}{\|z^{n}\|^2_{A_{N}^2}}-1.
\end{align*}
Together with the above Claim, one can see that the sequence $\{\frac{\varphi^n(z)}{\beta_n}\}_{n=0}^{\infty}$ is not uniformly bounded. Since $M_{\frac{1}{1-tz}}$ is a bounded invertible linear operator on $H^2_{\beta}$, the sequence
$\mathfrak{F}_{\beta}=\{\frac{1}{1-tz}\frac{\varphi^n(z)}{\beta_n}\}_{n=0}^{\infty}$ is not a Riesz base of $H^2_{\beta}$.
\end{proof}

\begin{theorem}\label{nosim}
Let $H^2_{\beta}$ be the weighted Hardy space induced by a weight sequence $w=\{w_k\}_{k=1}^{\infty}$ with $w_k\rightarrow 1$. Let $\varphi(z)=\frac{t-z}{1-tz}$, $t\in(0,1)$, be a M\"{o}bius transformation on $\mathbb{D}$.
If $M_z$ is strictly cyclic on $H^2_{\beta}$ and
\[
\lim\limits_{k\rightarrow\infty}(k+1)(w_k-1)=+\infty,
\]	
then the Hermitian holomorphic vector bundle $E_{S_{\beta^{-1}}}(\mathbb{D})$ is not similar to the vector bundle $E_{\varphi(S_{\beta^{-1}})}(\mathbb{D})$.
\end{theorem}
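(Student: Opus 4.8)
The plan is to argue by contradiction, chaining together Lemmas \ref{cs-cs}, \ref{similaroperator}, \ref{CorA} and \ref{unbounded}; the only genuinely new ingredient is an elementary observation that the growth condition reverses sign when one passes from $H^2_\beta$ to the dual space $H^2_{\beta^{-1}}$.

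First I would assume, toward a contradiction, that there is a bounded invertible $X$ implementing a similarity deformation from $E_{S_{\beta^{-1}}}(\mathbb{D})$ to $E_{\varphi(S_{\beta^{-1}})}(\mathbb{D})$. Applying Lemma \ref{cs-cs} with the weight sequence $\beta$ replaced throughout by $\beta^{-1}$ (note that $(\beta^{-1})^{-1}=\beta$), I obtain that $M_z$ and $M_\varphi$ are similar on $H^2_\beta$. Since $M_z$ is strictly cyclic on $H^2_\beta$ by hypothesis, Lemma \ref{similaroperator} then gives that $\mathfrak{F}_\beta=\{\frac{1}{1-tz}\frac{\varphi^n(z)}{\beta_n}\}_{n=0}^\infty$ is a Riesz base of $H^2_\beta$, and Lemma \ref{CorA} transfers this conclusion to the dual space, so that $\mathfrak{F}_{\beta^{-1}}=\{\frac{\beta_n\varphi^n(z)}{1-tz}\}_{n=0}^\infty$ is a Riesz base of $H^2_{\beta^{-1}}$.

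The contradiction comes from applying Lemma \ref{unbounded} to the space $H^2_{\beta^{-1}}$. Here the key elementary point is that the weight sequence $v=\{v_k\}$ of $H^2_{\beta^{-1}}$ is $v_k=\beta^{-1}_k/\beta^{-1}_{k-1}=1/w_k$, so $v_k\to 1$ and
\[
(k+1)(v_k-1)=-\frac{(k+1)(w_k-1)}{w_k}\longrightarrow-\infty,
\]
because $(k+1)(w_k-1)\to+\infty$ and $w_k\to 1$. The sequence that Lemma \ref{unbounded} builds out of the space $H^2_{\beta^{-1}}$ is exactly $\mathfrak{F}_{\beta^{-1}}$, since its $\beta_n$ is replaced by the weight $\beta^{-1}_n=1/\beta_n$ of that space; hence the lemma forces $\mathfrak{F}_{\beta^{-1}}$ to fail to be a Riesz base of $H^2_{\beta^{-1}}$. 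This contradicts the previous paragraph, and therefore $E_{S_{\beta^{-1}}}(\mathbb{D})$ cannot be similar to $E_{\varphi(S_{\beta^{-1}})}(\mathbb{D})$.

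I expect no single step to be a serious obstacle, because the analytic heart of the matter~---~the unboundedness estimate obtained via the unitary representation $D_N^+$ of $\textrm{Aut}(\mathbb{D})$ on the weighted Bergman spaces~---~is already carried out in Lemma \ref{unbounded}. The only delicate point is the bookkeeping: one must keep straight which Hilbert space each cited lemma operates on and verify that the $+\infty$ growth hypothesis of the theorem feeds precisely the $-\infty$ hypothesis of Lemma \ref{unbounded} on the dual space after dualizing via $\beta\mapsto\beta^{-1}$.
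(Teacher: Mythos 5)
Your proposal is correct and follows exactly the paper's own proof: contradiction via Lemma \ref{cs-cs} (applied with $\beta^{-1}$ in place of $\beta$), then Lemma \ref{similaroperator}, then Lemma \ref{CorA}, and finally Lemma \ref{unbounded} on $H^2_{\beta^{-1}}$, whose weight sequence $1/w_k$ satisfies $(k+1)(1/w_k-1)\to-\infty$. Your explicit sign computation and your reading of Lemma \ref{unbounded} as concluding that $\mathfrak{F}_{\beta^{-1}}$ is \emph{not} a Riesz base (the lemma's statement has a typo; its proof and the paper's use of it both give the negative conclusion) are precisely what the paper does, only with the bookkeeping spelled out more carefully.
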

\begin{proof}
Suppose that the Hermitian holomorphic vector bundle $E_{S_{\beta^{-1}}}(\mathbb{D})$ is similar to the vector bundle $E_{\varphi(S_{\beta^{-1}})}(\mathbb{D})$. By Lemma \ref{cs-cs}, the multiplication operators $M_z$ and $M_{\varphi}$ are similar on $H^2_{\beta}$. Denote $\mathfrak{F}=\{\frac{\varphi^n(z)}{1-\overline{z_0}z}\}_{n=0}^{\infty}$, $\mathfrak{F}_{\beta}=\{\frac{1}{1-\overline{z_0}z}\frac{\varphi^n(z)}{\beta_n}\}_{n=0}^{\infty}$ and $\mathfrak{F}_{\beta^{-1}}=\{\frac{\beta_n\varphi^n(z)}{1-\overline{z_0}z}\}_{n=0}^{\infty}$. By Lemma \ref{similaroperator}, one can see that
$\mathfrak{F}_{\beta}$ is a Riesz base of $H^2_{\beta}$. Furthermore, by Lemma \ref{CorA}, $\mathfrak{F}_{\beta^{-1}}$ is also a Riesz base of $H^2_{\beta^{-1}}$. However, this is a contradiction to Lemma \ref{unbounded} since $\lim\limits_{k\rightarrow\infty}(k+1)({1}/{w_k}-1)=-\infty$. This finishes the proof.
\end{proof}

\begin{lemma}[Corollary 3 in \cite{Shi}, p~97]\label{strictcyclic}
If $M_z$ is strictly cyclic on $H^2_{\widetilde{\beta}}$ with weight sequence $\widetilde{w}=\{\widetilde{w}_k\}_{k=1}^{\infty}$, and if $\{a_k\}_{k=1}^{\infty}$ is a decreasing sequence of positive numbers, then $M_z$ is strictly cyclic on $H^2_{\beta}$ with weight sequence $\{a_k\widetilde{w}_k\}_{k=1}^{\infty}$.
\end{lemma}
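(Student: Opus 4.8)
The plan is to recast strict cyclicity as a quantitative condition on the weight sequence and then to reduce the statement to a purely termwise comparison of the two weights. By Proposition~31 of \cite{Shi}, quoted above, $M_z$ is strictly cyclic on $H^2_\beta$ exactly when $H^\infty_\beta=H^2_\beta$, that is, when $H^2_\beta$ is a Banach algebra under a norm equivalent to $\|\cdot\|_{H^2_\beta}$. Testing the multiplier inequality on the monomials $z^k/\beta_k$ and, conversely, estimating the coefficients of a product $fg$ by a weighted Cauchy--Schwarz inequality, one sees that this happens if and only if
\[
c(\beta):=\sup_{n\ge 0}\ \sum_{k=0}^{n}\left(\frac{\beta_n}{\beta_k\,\beta_{n-k}}\right)^2<\infty .
\]
Accordingly, writing $\beta$ and $\widetilde\beta$ for the weight sequences attached to $w_k=a_k\widetilde w_k$ and $\widetilde w_k$, it suffices to prove $c(\beta)\le c(\widetilde\beta)$; then strict cyclicity of $M_z$ on $H^2_{\widetilde\beta}$ gives $c(\widetilde\beta)<\infty$, whence $c(\beta)<\infty$ and $M_z$ is strictly cyclic on $H^2_\beta$.

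First I would record the relation between the two weight sequences. Put $A_n=\prod_{j=1}^{n}a_j$ with $A_0=1$; then $\beta_n=\prod_{j=1}^{n}w_j=A_n\widetilde\beta_n$. The decisive elementary fact is the submultiplicativity $A_n\le A_kA_{n-k}$ for $0\le k\le n$, which is exactly where the monotonicity of $\{a_k\}$ enters:
\[
\frac{A_n}{A_kA_{n-k}}=\frac{\prod_{j=k+1}^{n}a_j}{\prod_{j=1}^{n-k}a_j}=\prod_{i=1}^{n-k}\frac{a_{k+i}}{a_i}\le 1,
\]
since $a_{k+i}\le a_i$ for every $i$ when $\{a_j\}$ is decreasing. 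Dividing $\beta_n=A_n\widetilde\beta_n$ by $\beta_k\beta_{n-k}=A_kA_{n-k}\,\widetilde\beta_k\widetilde\beta_{n-k}$ therefore yields the termwise domination
\[
\frac{\beta_n}{\beta_k\beta_{n-k}}=\frac{A_n}{A_kA_{n-k}}\cdot\frac{\widetilde\beta_n}{\widetilde\beta_k\widetilde\beta_{n-k}}\le\frac{\widetilde\beta_n}{\widetilde\beta_k\widetilde\beta_{n-k}}
\]
valid for all $0\le k\le n$.

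Squaring and summing over $k$, then taking the supremum over $n$, gives $c(\beta)\le c(\widetilde\beta)<\infty$, which finishes the argument. The conceptual content is the single observation that a decreasing modulating sequence produces a submultiplicative cumulative product $A_n$, so the convolution ratios $\beta_n/(\beta_k\beta_{n-k})$ governing strict cyclicity can only decrease. The one point requiring care---and the main obstacle---is to pin down the precise quantitative characterization of strict cyclicity from the framework of \cite{Shi} and to check that it is genuinely monotone in the ratios $\beta_n/(\beta_k\beta_{n-k})$; once that monotonicity is in place (as it is for the squared-sum criterion above), the termwise comparison transfers it verbatim and no further estimate is needed.
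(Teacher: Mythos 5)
The gap is in your very first step, the claimed equivalence between strict cyclicity and the condition $c(\beta)=\sup_{n}\sum_{k=0}^{n}\left(\beta_n/(\beta_k\beta_{n-k})\right)^2<\infty$. Only one implication of that equivalence is available: the weighted Cauchy--Schwarz argument shows that $c(\beta)<\infty$ forces $H^2_{\beta}$ to be a convolution algebra, hence $H^{\infty}_{\beta}=H^2_{\beta}$ and $M_z$ strictly cyclic (this is the sufficiency criterion in \cite{Shi}). Your proof, however, needs the \emph{converse}: you must pass from strict cyclicity of $M_z$ on $H^2_{\widetilde{\beta}}$ to $c(\widetilde{\beta})<\infty$. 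Testing the multiplier inequality on monomials does not give this: with $f=z^j/\beta_j$ and $g=z^k/\beta_k$, the bound $\|fg\|\leq C\|f\|\|g\|$ yields only $\sup_{j,k}\beta_{j+k}/(\beta_j\beta_k)\leq C$, which is much weaker than the square-summability over the antidiagonal. In fact no choice of test functions can recover $\sum_k c_{n,k}^2$ for a fixed $n$, where $c_{n,k}=\beta_n/(\beta_k\beta_{n-k})$: the bilinear form $(x,y)\mapsto\sum_k c_{n,k}x_k y_{n-k}$ on $\ell^2\times\ell^2$ has a matrix with at most one nonzero entry in each row and column, so its norm is $\max_k c_{n,k}$, not $\bigl(\sum_k c_{n,k}^2\bigr)^{1/2}$. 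Whether strict cyclicity implies $c(\beta)<\infty$ is precisely the converse of Shields' sufficiency criterion, which is not proved in \cite{Shi} and is not a standard fact; your argument therefore rests on an unproven (and possibly false) characterization. Note also that the paper itself offers no proof to compare with: the lemma is quoted verbatim as Corollary 3, p.~97, of \cite{Shi}.

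The good news is that your key observation---that a decreasing $\{a_k\}$ makes $A_n=\prod_{j=1}^{n}a_j$ submultiplicative, $A_n\leq A_kA_{n-k}$, whence $\beta_n/(\beta_k\beta_{n-k})\leq\widetilde{\beta}_n/(\widetilde{\beta}_k\widetilde{\beta}_{n-k})$---is exactly the right combinatorial content, and it can be used directly, bypassing $c(\beta)$ entirely. By Proposition 31 in \cite{Shi} and the closed graph theorem, strict cyclicity on $H^2_{\widetilde{\beta}}$ gives a constant $C$ with $\|uv\|_{H^2_{\widetilde{\beta}}}\leq C\|u\|_{H^2_{\widetilde{\beta}}}\|v\|_{H^2_{\widetilde{\beta}}}$ for all $u,v\in H^2_{\widetilde{\beta}}$. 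Given $f,g\in H^2_{\beta}$, set $F(z)=\sum_k|\widehat{f}(k)|A_kz^k$ and $G(z)=\sum_k|\widehat{g}(k)|A_kz^k$; since $\beta_k=A_k\widetilde{\beta}_k$ one has $\|F\|_{H^2_{\widetilde{\beta}}}=\|f\|_{H^2_{\beta}}$ and $\|G\|_{H^2_{\widetilde{\beta}}}=\|g\|_{H^2_{\beta}}$, and termwise, using $A_n\leq A_kA_{n-k}$ and the positivity of the coefficients of $F$ and $G$,
\[
|\widehat{fg}(n)|\,\beta_n\leq\sum_{k=0}^{n}|\widehat{f}(k)|\,|\widehat{g}(n-k)|\,A_kA_{n-k}\,\widetilde{\beta}_n=\widehat{FG}(n)\,\widetilde{\beta}_n,
\]
so that $\|fg\|_{H^2_{\beta}}\leq\|FG\|_{H^2_{\widetilde{\beta}}}\leq C\|f\|_{H^2_{\beta}}\|g\|_{H^2_{\beta}}$. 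Hence $H^2_{\beta}$ is an algebra, i.e.\ $H^{\infty}_{\beta}=H^2_{\beta}$, and $M_z$ is strictly cyclic on $H^2_{\beta}$ by Proposition 31 again. This positivity argument is the standard proof behind the cited corollary; replacing your unjustified equivalence by it makes the write-up correct.
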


\begin{example}\label{nln}
Suppose that $\varphi(z)=\frac{t-z}{1-tz}$, $t\in(0,1)$, is a M\"{o}bius transformation on $\mathbb{D}$. Let $\widetilde{w}_k=\frac{k+2}{k+1}$, $a_k={\textrm e}^{\ln^2(k+3)-\ln^2(k+2)}$ and $w_k=a_k\widetilde{w}_k$ for all $k=1,2,\ldots$. Let $H^2_{\widetilde{\beta}}$ and $H^2_{\beta}$ be the weighted Hardy spaces with weight sequences $\widetilde{w}=\{\widetilde{w}_k\}_{k=1}^{\infty}$ and $w=\{w_k\}_{k=1}^{\infty}$, respectively.

It is easy to see that
\begin{align*}
\lim\limits_{k\rightarrow\infty}\ln^2(k+3)-\ln^2(k+2)&=\lim\limits_{k\rightarrow\infty}\ln(1+\frac{1}{k+2})\cdot (\ln (k+3)+\ln (k+2)) \\
&=\lim\limits_{k\rightarrow\infty}\frac{\ln (k+3)+\ln(k+2)}{k+2} \\
&=0.
\end{align*}
Then, $w_k=\frac{k+2}{k+1}\cdot \textrm{e}^{\ln^2(k+3)-\ln^2(k+2)}\rightarrow 1$ as $k\rightarrow \infty$. Moreover,
\begin{align*}
&\lim\limits_{k\rightarrow\infty}(k+1)(w_k-1) \\
=&\lim\limits_{k\rightarrow\infty}(k+1)\left(\frac{k+2}{k+1}\cdot \textrm{e}^{\ln^2(k+3)-\ln^2(k+2)}-1\right) \\
=&\lim\limits_{k\rightarrow\infty}(k+1)\left(\frac{k+2}{k+1}\cdot \textrm{e}^{\ln^2(k+3)-\ln^2(k+2)}-\frac{k+2}{k+1}\right)+(k+1)\left(\frac{k+2}{k+1}-1\right) \\
=&\lim\limits_{k\rightarrow\infty}(k+2)\cdot(\ln^2(k+3)-\ln^2(k+2))+1 \\
=&\lim\limits_{k\rightarrow\infty}(k+2)\cdot\ln(1+\frac{1}{k+2})\cdot (\ln (k+3)+\ln (k+2))+1  \\
=&\lim\limits_{k\rightarrow\infty} \ln (k+3)+\ln (k+2)+1 \\
=&+\infty.
\end{align*}
Notice that $\left(\ln^2 x\right)'=\frac{2(1-\ln x)}{x^2}\leq0$ for $x\geq \textrm{e}$. The function $\ln^2 x$ is concave on $[\textrm{e}, +\infty)$. Then for $k\geq 1$
\[
\ln^2(k+4)-\ln^2 (k+3)\leq \ln^2(k+3)-\ln^2(k+2),
\]
which implies $a_k={\rm e}^{\ln^2(k+3)-\ln^2(k+2)}$ is decreasing with respect to $k$. In addition, $M_z$ is strictly cyclic on $H^2_{\widetilde{\beta}}$ (Example 1 in \cite{Shi}, p~99). Then, by Lemma \ref{strictcyclic}, $M_z$ is strictly cyclic on the weighted Hardy space $H^2_{\beta}$.

Therefore, by Theorem \ref{nosim}, the Hermitian holomorphic vector bundle $E_{S_{\beta^{-1}}}(\mathbb{D})$ is not similar to the vector bundle $E_{\varphi(S_{\beta^{-1}})}(\mathbb{D})$.
\end{example}

\begin{remark}
So far, one can see that each automorphism $\varphi$ induces a similar deformation from the Hermitian holomorphic vector bundle $E_{S_{\beta}}(\mathbb{D})$ to the vector bundle $E_{\varphi(S_{\beta})}(\mathbb{D})$ for weighted Hardy spaces with polynomial growth, but it is not true for weighted Hardy spaces with intermediate growth. So the polynomial growth condition is indispensable in our main theory.
\end{remark}

Our main theorems and related results could also answer some problems proposed in operator theory and K-theory. One problem was proposed by R. Douglas \cite{D07} in 2007.
\begin{question}[\cite{{D07}}]\label{Dconj}
Is $M_B$ on $L^2_a(D)$ similar to $M_z\otimes \mathbf{I}_m$ on $L^2_a(D)\otimes\mathbb{C}^m$, where $L^2_a(D)$ is the Bergman space and $m$ is the multiplicity of the finite Blaschke product $B(z)$?
\end{question}
Notice that Corollary \ref{Blaschesimilar} is a positive answer to the geometric bundle version of Douglas's problem for weighted Hardy spaces of polynomial growth. Then, together with Lemma \ref{cs-cs} and Theorem \ref{nosim}, we could also answer the generalized version of Douglas's problem.
\begin{corollary}\label{MB}
Let $B(z)$ be a finite Blaschke product with order $m$ on $\mathbb{D}$. Then $M_B\sim \bigoplus_{1}^{m}M_z$ on a weighted Hardy space $H^2_{\beta}$ of polynomial growth, and it is possibly false for weighted Hardy spaces with intermediate growth.
\end{corollary}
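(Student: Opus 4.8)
The plan is to reduce the operator-similarity statement for $M_B$ on $H^2_\beta$ to the bundle-similarity statement already established in Corollary \ref{Blaschesimilar}, by passing through adjoints and the unitary identification furnished by the isometries $T_{\beta,\beta^{-1}}$. The bridge is the identity $T_{\beta,\beta^{-1}}S_\beta T_{\beta^{-1},\beta}=M_z^*$ from the proof of Lemma \ref{Commutator-S}, which says that $S_\beta$ on $H^2_\beta$ is unitarily equivalent to $M_z^*$ on $H^2_{\beta^{-1}}$; equivalently, reading the identity with $\beta$ replaced by $\beta^{-1}$, the operator $M_z^*$ on $H^2_\beta$ is unitarily equivalent to $S_{\beta^{-1}}$ on $H^2_{\beta^{-1}}$. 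Since $M_B^*=B^*(M_z^*)$ by the functional calculus, this conjugation turns $M_B^*$ on $H^2_\beta$ into $B^*(S_{\beta^{-1}})$ on $H^2_{\beta^{-1}}$, and turns $\bigoplus_1^m M_z^*$ into $\bigoplus_1^m S_{\beta^{-1}}$. Because similarity is preserved under taking adjoints and $(\bigoplus_1^m M_z)^*=\bigoplus_1^m M_z^*$, I obtain the key equivalence
\[
M_B\sim\bigoplus_1^m M_z \ \text{on} \ H^2_\beta \iff B^*(S_{\beta^{-1}})\sim\bigoplus_1^m S_{\beta^{-1}} \ \text{on} \ H^2_{\beta^{-1}}.
\]
Finally, for Cowen--Douglas operators this operator similarity is the same as the similarity of the induced bundles $E_{B^*(S_{\beta^{-1}})}$ and $\bigoplus_1^m E_{S_{\beta^{-1}}}$, exactly as recorded in the local rigidity discussion around Lemma \ref{localrig}.

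For the polynomial-growth half, I would first note that $B^*$ is again a finite Blaschke product of order $m$, and that $H^2_{\beta^{-1}}$ is again of polynomial growth: its weight is $1/w_k$, and $(k+1)|1/w_k-1|=(k+1)|w_k-1|/w_k$ stays bounded whenever $(k+1)|w_k-1|$ does, since $w_k\to 1$. Corollary \ref{Blaschesimilar}, applied to the Blaschke product $B^*$ on the polynomial-growth space $H^2_{\beta^{-1}}$, then yields $E_{B^*(S_{\beta^{-1}})}\sim\bigoplus_1^m E_{S_{\beta^{-1}}}$. Unwinding the key equivalence gives $M_B\sim\bigoplus_1^m M_z$ on $H^2_\beta$, which is the asserted conclusion.

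For the intermediate-growth half it suffices to exhibit a single counterexample, and the natural candidate is the space $H^2_\beta$ constructed in Example \ref{nln}, which is of intermediate growth and on which $M_z$ is strictly cyclic, together with the M\"obius transformation $\varphi(z)=(t-z)/(1-tz)$, $t\in(0,1)$, regarded as a Blaschke product of order $m=1$. Here $\varphi^*=\varphi$ because $t$ is real, so specializing the key equivalence to $m=1$ and $B=\varphi$ gives $M_\varphi\sim M_z$ on $H^2_\beta$ if and only if $E_{\varphi(S_{\beta^{-1}})}\sim E_{S_{\beta^{-1}}}$. But Example \ref{nln}, through Theorem \ref{nosim}, asserts precisely that these two bundles are \emph{not} similar. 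Hence $M_\varphi\not\sim M_z$ on this intermediate-growth $H^2_\beta$, showing that the statement can fail.

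I expect the only genuinely delicate point to be keeping the $\beta\leftrightarrow\beta^{-1}$ duality and the adjoints consistent: one must check that the Blaschke factor produced by conjugation is $B^*$ and not $B$, that the self-adjointness $\varphi^*=\varphi$ (valid because $t$ is real) is what makes Example \ref{nln} applicable without modification, and that the polynomial-growth property is stable under $\beta\mapsto\beta^{-1}$. Once these identifications are pinned down, both halves follow immediately from Corollary \ref{Blaschesimilar} and Example \ref{nln}, and no new analytic difficulty arises beyond those already settled in the Key Lemma and in Lemma \ref{unbounded}.
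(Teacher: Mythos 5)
Your proposal is correct and follows essentially the same route as the paper: the paper derives this corollary by combining Corollary \ref{Blaschesimilar} (the bundle statement under polynomial growth), the $T_{\beta,\beta^{-1}}$-conjugation transfer between $M_z^*$ and $S_{\beta^{-1}}$ underlying Lemma \ref{cs-cs}, and Theorem \ref{nosim} with Example \ref{nln} for the intermediate-growth counterexample, which are exactly your three ingredients. Your write-up merely makes explicit the bookkeeping the paper leaves implicit (adjoints, $B$ versus $B^*$, $\varphi^*=\varphi$, and stability of polynomial growth under $\beta\mapsto\beta^{-1}$), all of which you handle correctly.
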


At the end, we give the $K_0$-group of the commutant algebra of a multiplication operator on a weighted Hardy space of polynomial growth, which is isomorphic to the $K_0$-group of some certain invariant deformation algebra.

\begin{corollary}\label{K0group}
Let $H^2_{\beta}$ be a weighted Hardy space of polynomial growth, and let $f(z)$ be an analytic function on $\overline{\mathbb{D}}$. Then
\[
K_0\left(\{M_f \}'_{H^2_{\beta}}\right)\cong K_0\left(\mathcal{L}_{inv}(E_{f(S_{\beta^{-1}})})\right)\cong \mathbb{Z}.
\]
\end{corollary}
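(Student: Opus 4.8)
The plan is to show that both algebras appearing in the statement are isomorphic to a single matrix algebra $M_m(H^\infty_{\beta})$, and then to read off $K_0$ from the semigroup computation already performed in the proof of the Main Theorem, where it was established that $\bigvee(H^\infty_{\beta})\cong\mathbb{N}$.

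First I would handle $\{M_f\}'_{H^2_{\beta}}$. By Lemma \ref{hFB} there is a finite Blaschke product $B$ of some order $m$ and a function $F\in\textrm{Hol}(\overline{\mathbb{D}})$ with $f=F\circ B$ and $\{M_f\}'_{H^2_{\beta}}=\{M_B\}'_{H^2_{\beta}}$. Since $H^2_{\beta}$ is of polynomial growth, Corollary \ref{MB} gives $M_B\sim\bigoplus_1^m M_z$ on $H^2_{\beta}$. A similarity of operators induces an algebra isomorphism of their commutants, the commutant of an $m$-fold ampliation is the algebra of $m\times m$ matrices over the commutant, and $\{M_z\}'_{H^2_{\beta}}=\{M_g;~g\in H^\infty_{\beta}\}\cong H^\infty_{\beta}$. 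Combining these,
\[
\{M_f\}'_{H^2_{\beta}}=\{M_B\}'_{H^2_{\beta}}\cong M_m\!\left(\{M_z\}'_{H^2_{\beta}}\right)\cong M_m(H^\infty_{\beta}).
\]

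Next I would treat the invariant deformation algebra. Because $w_k\to 1$, one has $(k+1)|1/w_k-1|=(k+1)|w_k-1|/w_k$, so $H^2_{\beta^{-1}}$ is again of polynomial growth, and $S_{\beta^{-1}}\in\mathbf{B}_1(\mathbb{D})$ gives $\mathcal{L}_{inv}(E_{f(S_{\beta^{-1}})})=\{f(S_{\beta^{-1}})\}'_{H^2_{\beta^{-1}}}$. Applying Lemma \ref{Bundledec} on $H^2_{\beta^{-1}}$ yields $f(S_{\beta^{-1}})\sim h(S_{\beta^{-1}})^{(m)}$ with $h(S_{\beta^{-1}})$ strongly irreducible, where the multiplicity is the same $m$ since the Blaschke factor attached to $f$ by Lemma \ref{hFB} is intrinsic to $f$; moreover Lemma \ref{Commutator-S} (with $\beta$ replaced by $\beta^{-1}$) gives $\{h(S_{\beta^{-1}})\}'_{H^2_{\beta^{-1}}}=\mathcal{L}_{inv}(E_{S_{\beta^{-1}}})\cong H^\infty_{\beta}$. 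Hence $\mathcal{L}_{inv}(E_{f(S_{\beta^{-1}})})\cong M_m(H^\infty_{\beta})$ as well, so the two algebras are isomorphic.

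Finally I would compute the group. By the isomorphism $\bigvee(H^\infty_{\beta})\cong\mathbb{N}$ established in the proof of the Main Theorem together with the stability $\bigvee(M_m(\mathcal{A}))\cong\bigvee(\mathcal{A})$ recorded in the K-theory review, one obtains $\bigvee(M_m(H^\infty_{\beta}))\cong\mathbb{N}$. Since $K_0$ is by definition the Grothendieck group of $\bigvee$, and the Grothendieck group of $(\mathbb{N},+)$ is $(\mathbb{Z},+)$, it follows that
\[
K_0\!\left(\{M_f\}'_{H^2_{\beta}}\right)\cong K_0\!\left(\mathcal{L}_{inv}(E_{f(S_{\beta^{-1}})})\right)\cong\mathbb{Z}.
\]
The main obstacle here is purely organizational rather than analytic: one must verify that the single integer $m$ governing the ampliation is the same on the $M_f$ side and on the $f(S_{\beta^{-1}})$ side so that the two algebras are genuinely isomorphic, and that the polynomial growth hypothesis transfers to the reciprocal weight so that Corollary \ref{MB} and Lemma \ref{Bundledec} apply on $H^2_{\beta^{-1}}$. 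Even without the matching of multiplicities the conclusion $K_0\cong\mathbb{Z}$ is robust, since both algebras are matrix algebras over $H^\infty_{\beta}$ and hence share the semigroup $\mathbb{N}$; the substantive content, namely $\bigvee(H^\infty_{\beta})\cong\mathbb{N}$ and $M_B\sim\bigoplus_1^m M_z$, has already been supplied by the earlier results.
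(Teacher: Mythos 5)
Your proposal is correct, and it reaches the conclusion by a genuinely different route than the paper. The paper's proof never identifies either algebra concretely: it applies the Main Theorem to $E_{f(S_{\beta^{-1}})}$ to obtain $E_{f(S_{\beta^{-1}})}\sim\bigoplus_1^m E_{h(S_{\beta^{-1}})}$ with $h(S_{\beta^{-1}})$ strongly irreducible, invokes Theorem 2.11 of \cite{J04} to conclude that $K_0$ of the commutant of this ampliation is $\mathbb{Z}$, and then transfers the result to $\{M_f\}'_{H^2_{\beta}}$ via Lemma \ref{hFBS}, which realizes $\mathcal{L}_{inv}(E_{f(S_{\beta^{-1}})})$ as $T_{\beta,\beta^{-1}}\{Y^*;\,Y\in\{M_f\}'_{H^2_{\beta}}\}T_{\beta^{-1},\beta}$, an anti-isomorphic copy of $\{M_f\}'_{H^2_{\beta}}$ (and anti-isomorphism preserves idempotents, their equivalence, and hence $K_0$). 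You instead identify both algebras, up to isomorphism, with one matrix algebra $\mathrm{M}_m(H^{\infty}_{\beta})$: the multiplication side via Lemma \ref{hFB} and Corollary \ref{MB}, the bundle side via Lemma \ref{Bundledec} and Lemma \ref{Commutator-S} on $H^2_{\beta^{-1}}$, and you then compute $K_0$ by hand from $\bigvee(H^{\infty}_{\beta})\cong\mathbb{N}$, matrix stability of $\bigvee$, and the Grothendieck construction. Your route buys a stronger structural statement (the two algebras are isomorphic, not merely of equal $K_0$), an explicit check that polynomial growth passes to the reciprocal weight (which the paper uses tacitly when it runs the Main Theorem on $H^2_{\beta^{-1}}$), and independence from the external reference \cite{J04}; the cost is reliance on Corollary \ref{MB}, itself a nontrivial application of the bundle theory, though it precedes this corollary in the paper so there is no circularity. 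Two small repairs: the factorization $f=h\circ B$ with $\mathcal{L}_{inv}(E_{f(S_{\beta^{-1}})})=\mathcal{L}_{inv}(E_{B(S_{\beta^{-1}})})$ that Lemma \ref{Bundledec} requires should be cited from Lemma \ref{hFBS} with $\beta$ replaced by $\beta^{-1}$; and your hedge about matching multiplicities can in fact be removed, since the Blaschke product furnished by Lemma \ref{hFBS} for $S_{\beta^{-1}}$ is exactly the one furnished by Lemma \ref{hFB} for $M_f$ on $H^2_{\beta}$, so the same $B$ and the same $m$ govern both sides --- although, as you correctly note, the conclusion $K_0\cong\mathbb{Z}$ does not need this.
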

\begin{proof}
By our Main Theorem, there exists a unique positive integer $m$ and an analytic function $h\in \textrm{Hol}(\overline{\mathbb{D}})$ inducing an indecomposable Hermitian holomorphic vector bundle $E_{h(S_{\beta})}$, such that
\[
E_{f(S_{\beta^{-1}})}\sim\bigoplus\limits_1^m E_{h(S_{\beta^{-1}})},
\]
Then,
\[
\{f(S_{\beta^{-1}})\}'_{H^2_{\beta^{-1}}}=\mathcal{L}_{inv}(E_{f(S_{\beta^{-1}})})\cong\mathcal{L}_{inv}\left(\bigoplus\limits_1^m E_{h(S_{\beta^{-1}})}\right)=
\left\{\bigoplus_{i=1}^{m}h(S_{\beta^{-1}})\right\}'_{\bigoplus_{i=1}^{m}H^2_{\beta^{-1}}}.
\]
Since $E_{h(S_{\beta^{-1}})}$ is an indecomposable Hermitian holomorphic vector bundle, $h(S_{\beta^{-1}})$ is a strongly irreducible Cowen-Douglas operator on $H^2_{\beta^{-1}}$. Then, following from Theorem 2.11 in \cite{J04}, we have
\[
K_0\left(\mathcal{L}_{inv}(E_{f(S_{\beta^{-1}})})\right) \cong K_0\left(\{f(S_{\beta^{-1}})\}'_{H^2_{\beta^{-1}}}\right) \cong K_0\left(\left\{\bigoplus_{i=1}^{m}h(S_{\beta^{-1}})\right\}'_{\bigoplus_{i=1}^{m}H^2_{\beta^{-1}}}\right)\cong\mathbb{Z}.
\]
By Lemma \ref{hFBS},
\[
\mathcal{L}_{inv}(E_{f(S_{\beta^{-1}})})
=T_{\beta, \beta^{-1}}\left\{Y^* ;~Y\in\{M_f\}'_{H^2_{\beta}} \right\}T_{\beta^{-1}, \beta}.
\]
Consequently,
\[
\{M^*_f \}'_{H^2_{\beta}}\cong \mathcal{L}_{inv}(E_{f(S_{\beta^{-1}})}).
\]
Therefore,
\[
K_0\left(\{M_h \}'_{H^2_{\beta}}\right)\cong K_0\left(\{M^*_h \}'_{H^2_{\beta}}\right)\cong K_0\left(\mathcal{L}_{inv}(E_{f(S_{\beta^{-1}})})\right) \cong\mathbb{Z}.
\]
\end{proof}

\section*{Declarations}

\noindent \textbf{Ethics approval}

\noindent Not applicable.

\noindent \textbf{Competing interests}

\noindent The author declares that there is no conflict of interest or competing interest.

\noindent \textbf{Authors' contributions}

\noindent All authors contributed equally to this work.

\noindent \textbf{Funding}

\noindent This work was supported by National Natural Science Foundation of China (Grant No. 12471120). The second author was partially supported by the Hebei Natural Science Foundation (Grant No. A2023205045).

\noindent \textbf{Availability of data and materials}

\noindent Data sharing is not applicable to this article as no data sets were generated or analyzed during the current study.


\begin{thebibliography}{000}

	\bibitem{Az95} E. A. Azoff, Test problems for operator algebras, Trans. Amer. Math. Soc. 347 (1995): 2989-3001.
	
	\bibitem{B51} N. K. Bari, Biorthogonal systems and bases in Hilbert space, Moskov. Gos. Univ. U\v{c}enye Zapiski Matematika, 148(4) (1951): 69-107. (in Russian)
	
	\bibitem{CFJ} Y. Cao, J. Fang, C. Jiang, K-groups of Banach algebras and strongly irreducible decompositions of operators,
	J. Operator Theory, 48 (2002): 235-253.
	
    \bibitem{Cal53} E. Calabi, Isometric imbedding of complex manifolds, Ann. of Math. (2) 58 (1953): 1-23.

	\bibitem{Car} G. Cassier, I. Chalendar, The group of the invariants of a finite Blaschke product, Complex Var. Theory Appl. 42(3) (2000): 193-206.

    \bibitem{CM83} D. N. Clark, G. Misra, On curvature and similarity, Michigan Math. J. 30(3) (1983): 361-367.

    \bibitem{CM85} D. N. Clark, G. Misra, On weighted shifts, curvature and similarity, J. London Math. Soc. 31(2) (1985): 357-368.
	
	\bibitem{CM} D. N. Clark, G. Misra, On homogeneous contractions and unitary representations of $SU(1,1)$, J. Operator Theory, 30 (1993): 109-122.

    \bibitem{CHM} J. B. Conway, D. A. Herrero, B. B. Morrel, Completing the Riesz-Dunford Functional Calculus, Mem. Amer. Math. Soc., 82(1989), no.~417, iv+104 pp.

	\bibitem{C78} C. C. Cowen, The commutant of an analytic Toeplitz operator, T. Am. Math. Soc. 239 (1978): 1-31.
	
	\bibitem{C90} C. C. Cowen, An application of Hadamard multiplication to operators on weighted Hardy spaces, Linear Alg. Appl. 133 (1990): 21-32.
	
	\bibitem{CD} M. J. Cowen, R. G. Douglas, Complex geometry and operator theory, Acta Math-Djursholm 141(1) (1978): 187-261.
	
	\bibitem{Dav90} K. Davidson, D. Herrero, The Jordan form of a bitriangular operator, J. Funct. Anal. 94(1) (1990) 27-73.
	
	\bibitem{D07} R. G. Douglas, Operator Theory and complex geometry, Extracta Mathematicae, 24(2) (2007): 135-165.

	\bibitem{D13} R. G. Douglas, H. Kwon, S. Treil, Similarity of n-hypercontractions and backward Bergman shifts, J. Lond. Math. Soc. 88(3) (2013): 637-648.

	\bibitem{Fri} E. Fricain, J. Mashreghi, On a characterization of finite Blaschke products, Complex Var. Elliptic Equ. 59(3) (2014): 362-368.

	\bibitem{For} J. P. Fortney, A Visual Introduction to Differential Forms and Calculus on Manifolds, Birkh\"{a}user/Springer, Cham, 2018.

    \bibitem{Harts} R. Hartshorne, Ample subvarieties of algebraic varieties, Vol. 156 of Lecture Notes in Mathematics. Springer-Verlag, Berlin-New York, 1970. Notes written in
     collaboration with C. Musili.

    \bibitem{HJ} B. Hou, C. Jiang, Analytic automorphism group and similar representation of analytic functions, J. Topol. Anal., online ready.
	
    \bibitem{H05} D. Huybrecht, Complex Geometry: An Introduction.  Universitext, Springer-Verlag, Berlin, 2005, xii+309 pp.

	\bibitem{Ji} K. Ji, R. Shi, Similarity of multiplication operators on the Sobolev disk algebra, Acta Math. Sin. 29(4) (2013): 789-800.

	\bibitem{J04} C. Jiang, Similarity classification of Cowen-Douglas operators, Canad. J. Math. 56(4) (2004): 742-775.
	
	\bibitem{JGJ} C. Jiang, X. Guo, K. Ji, K-group and similarity classification of operators, J. Funct. Anal. 225 (2005): 167-192.
	
	\bibitem{JW} C. Jiang, Z. Wang, Structure of Hilbert Space Operators, World Scientific Publishing, Hackensack, NJ, 2006.

	\bibitem{JL} C. Jiang,  Y. Li, The commutant and similarity invariant of analytic Toeplitz operators on Bergman space,
	Sci. China Ser. A: Mathematics, 50(5) (2007): 651-664.

	\bibitem{JJ07} C. Jiang, K. Ji, Similarity classification of holomorphic curves, Adv. Math. 215 (2007): 446-468.
	
	\bibitem{JZ} C. Jiang,  D. Zheng, Similarity of analytic Toeplitz operators on the Bergman spaces, J. Funct. Anal. 258(9) (2010): 2961-2982.

	\bibitem{JJK23} C. Jiang, K. Ji, D. K. Keshari, Geometric similarity classification of Cowen-Douglas operators, J. Noncommut. Geom. 17(2) (2023): 573-608.
	
    \bibitem{Kad57} R. V. Kadison, I. M. Singer, Three test problems in operator theory, Pacific J. Math. 7 (1957), 1101-1106.

    \bibitem{Kap54} I. Kaplansky, Infinite abelian groups. University of Michigan Press, Ann Arbor, MI, 1954.

	\bibitem{KMis12} A. Kor\'{a}nyi,  G. Misra, Homogeneous operators on Hilbert spaces of holomorphic functions, J. Funct. Anal. 254(9) (2012): 2419-2436.

	\bibitem{KT09} H. Kwon, S. Treil, Similarity of operators and geometry of eigenvector bundles, Publ. Mat. 53 (2009): 417-438.
	
    \bibitem{Mis} G. Misra, Curvature and the backward shift operators, P. Am. Math. Soc. 91(1) (1984): 105-107.

	\bibitem{Nik} N. K. Nikol'ski\v{\i}, Treatise on the shift operator. Springer Berlin Heidelberg, 1986.
	
	\bibitem{Sing} I. Singer, Bases in Banach Space I, Springer-Verlag, New York, 1970.

   \bibitem{Sha} H. S. Shapiro, A. L. Shields,  On some interpolation problems for analytic functions, Amer. J. Math. 83(3) (1961): 513-532.
	
	\bibitem{Shi} A. L. Shields, Weighted shift operators and analytic function theory, "Topics in Operator Theory", Mathematical Surveys, Vol. 13, American Mathematical Society, Providence, RI, (1974): 49-128.
	
	\bibitem{Th76} J. Thomson, The commutant of a class of analytic Toeplitz operators II, Indiana Univ. Math. J. 25 (1976): 793-800.
	
	\bibitem{Th77} J. Thomson, The commutant of a class of analytic Toeplitz operators, Amer. J. Math. 99 (1977): 522-529.

	\bibitem{TW09} S. R. Treil, B. D. Wick, Analytic projections, corona problem and geometry of holomorphic vector bundles, J. Am. Math. Soc. 22(1) (2009): 55-76.

	\bibitem{U90}  M. Uchiyama, Curvatures and similarity of operators with holomorphic eigenvectors, T. Am. Math. Soc. 319 (1990): 405-415.



\end{thebibliography}
\end{document}